\newtheorem{theorem}{Theorem}[section]
\newtheorem{lemma}[theorem]{Lemma}
\newtheorem{assumption}[theorem]{Assumption}
\newtheorem{definition}[theorem]{Definition}
\newtheorem{example}[theorem]{Example}
\newtheorem{remark}[theorem]{Remark}
\numberwithin{equation}{section}
\newenvironment{proof}[1][Proof]{\textbf{#1.} }
{\ \rule{0.75em}{0.75em}\smallskip}
\begin{document}

\begin{center}
\Large\bf $\ell_{1}^{2}-\eta\ell_{2}^{2}$ sparsity regularization for nonlinear ill-posed problems
\end{center}

\begin{center}
Long Li\footnotemark[1] \quad and \quad Liang Ding\footnotemark[2]$^{,*}$

\footnotetext[1]{Department of Mathematics, Northeast Forestry University, Harbin 150040, China;
e-mail: {\tt 15146259835@nef u.edu.cn}.}
\footnotetext[2]{Department of Mathematics, Northeast Forestry University, Harbin 150040, China;
e-mail: {\tt dl@nefu.edu.cn}. }

\renewcommand{\thefootnote}{\fnsymbol{footnote}}
\footnotetext[1]{Corresponding author.}
\renewcommand{\thefootnote}{\arabic{footnote}}
\end{center}

\medskip
\begin{quote}
{\bf Abstract.} In this study, we investigate the $\left\|\cdot\right\|_{\ell_{1}}^{2}-\eta\left\|\cdot\right\|_{\ell_{2}}^{2}$ sparsity regularization with $0< \eta\leq 1$, in the context of nonlinear ill-posed inverse problems. We focus on the examination of the well-posedness associated with this regularization approach. Notably, the case where $\eta=1$ presents weaker theoretical outcomes than $0< \eta<1$, primarily due to the absence of coercivity and the Radon-Riesz property associated with the regularization term. Under specific conditions pertaining to the nonlinearity of the operator $F$, we establish that every minimizer of the $\left\|\cdot\right\|_{\ell_{1}}^{2}-\eta\left\|\cdot\right\|_{\ell_{2}}^{2}$ regularization exhibits sparsity. Moreover, for the case where $0<\eta<1$, we demonstrate convergence rates of $\mathcal{O}\left(\delta^{1/2}\right)$ and $\mathcal{O}\left(\delta\right)$ for the regularized solution, concerning a sparse exact solution, under differing yet widely accepted conditions related to the nonlinearity of $F$. Additionally, we present the iterative half variation algorithm as an effective method for addressing the $\left\|\cdot\right\|_{\ell_{1}}^{2}-\eta\left\|\cdot\right\|_{\ell_{2}}^{2}$ regularization in the domain of nonlinear ill-posed equations. Numerical results provided corroborate the effectiveness of the proposed methodology.

\end{quote}

\smallskip
{\bf Keywords.}  Sparsity regularization, nonlinear inverse problem, $\ell_{1}^{2}-\eta\ell_{2}^{2}$ regularization, non-convex, iterative half variation algorithm

\section{Introduction}\label{sec1}

\par The investigation of the non-convex $\left\|\cdot\right\|_{\ell_{1}}- \eta\left\|\cdot\right\|_{\ell_{2}}$ $\left(0<\eta\leq 1\right)$ regularization has garnered significant interest in the field of sparse recovery in recent years, as indicated in \cite{DH19, LCGN20, LY18, YL15} and related references. This approach serves as an alternative to the $\ell_{p}$-norm $\left(0\leq p<1\right)$ and offers a notable advantage: it approximates the $\ell_{0}$-norm well while providing a simpler computational structure. Unfortunately, challenges arise due to the non-differentiability of $\eta\left\|x\right\|_{\ell_{2}}$ at $0$. This complicates the convergence analysis of iterative algorithms, necessitating additional conditions to ensure that the solutions obtained are non-zero. Furthermore, the term $\eta x/\left\|x\right\|_{\ell_{2}}$, stemming from $\eta\left\|x\right\|_{\ell_{2}}$ within the iterative algorithm, can impact the algorithm's convergence rate. Motivated by the proximal operator of $\left\|\cdot\right\|_{\ell_{1}}^{2}$ discussed in \cite{B17}, \cite{LD25} introduced a non-convex $\left\|\cdot\right\|_{\ell_{1}}^{2}-\eta\left\|\cdot\right\|_{\ell_{2}}^{2}$ regularization to mitigate the aforementioned issues associated with $\eta\left\|x\right\| _{\ell_{2}}$. In this paper, we further explore the potential of $\left\|\cdot\right\|_{\ell_{1}}^{2}-\eta\left\|\cdot\right\|_{\ell_{2}}^{2}$ regularization method for addressing nonlinear ill-posed operator equations that admit sparse solutions. Additionally, we analyze the well-posedness of the regularization in the specific case where $\eta=1$.

\par We aim to address an ill-posed operator equation of the form
\begin{equation}\label{equ1.1}
     F\left(x\right)=y,
\end{equation}
where $x$ is sparse, $F:\ell_{2}\rightarrow Y$ is a weakly sequentially closed nonlinear operator mapping between $\ell_{2}$ and a Hilbert space $Y$ with norms $\left\|\cdot\right\|_{\ell_{2}}$ and $\left\|\cdot\right\|_{Y}$, respectively. Throughout this paper, we let $\left<\cdot,\cdot\right>$ denote the inner product in the $\ell_{2}$ space, and $e_{i}=(\underbrace{0,\cdots,0,1}_{i},0,\cdots)$, $i\geq 1$. The exact data $y$ and the observed data $y^{\delta}$ satisfy $\left\|y^{\delta}-y\right\|_{Y}\leq \delta$, where $\delta> 0$ represents the noise level.

\subsection{Relative works}\label{sec1.1}

\par The first systematic theoretical analysis of sparse regularization for addressing ill-posed inverse problems was initiated by Daubechies et al.\ in 2004 \cite{DDD04}. They proposed $\ell_{p}$ ($1\leq p\leq 2$) sparse regularization as a solution for such problems and established the convergence rate of the iterative soft-thresholding algorithm. Following this, many researchers directed their efforts towards the regularization properties and iterative frameworks for solving linear ill-posed problems, as detailed in \cite{F10, SGGHL09}, with subsequent extensions to nonlinear ill-posed problems. A significant body of work has emerged concerning the regularization properties and minimizers of sparse regularization for nonlinear ill-posed issues; refer to \cite{JM12, RT05, RT06, TB10}. However, the above literature primarily addresses the convex case where $p\geq 1$. In contrast, additional conditions are necessary for the non-convex scenario where $0\leq p<1$ to analyze the regularization properties and convergence rates. Some insights into the regularization properties and convergence rates of non-convex regularization for $0\leq p<1$ can be found in \cite{G09, WLMC13, Z09}. Therefore, the established results regarding the regularization properties of convex sparse regularization can be leveraged to examine the original non-convex sparse regularization. To minimize of $\ell_{p}$ sparse regularization with $1\leq p\leq 2$, several numerical algorithms have been developed to tackle linear ill-posed inverse problems, including alternating direction method of multipliers (ADMM) \cite{WYZ19}, iteratively reweighted least squares (IRLS) \cite{XZWCL10}, and iterative hard thresholding algorithm (IHTA) \cite{BD09}. However, these numerical techniques cannot be directly applied to nonlinear ill-posed inverse problems.

\par The non-convex $\ell_{1}-\ell_{2}$ regularization is highly effective at promoting sparsity and enhancing reconstruction accuracy, presenting notable advantages over $\ell_{p}$ regularization. For the linear sparse inverse problem, a significant amount of research has been dedicated to $\ell_{1}-\ell_{2}$ regularization. \cite{YL15} characterized $\ell_{1}-\ell_{2}$ as a non-convex yet Lipschitz continuous metric, proposing a minimization algorithm that converges to stationary points meeting the first-order optimality conditions. Building on this foundation, \cite{LY18} derived the proximal operator for the $\ell_{1}-\ell_{2}$ metric and effectively integrated it with the forward-backward splitting (FBS) technique and the alternating direction method of multipliers (ADMM), facilitating efficient numerical solutions. Furthermore, \cite{LNN20} applied $\ell_{1}-\ell_{2}$ regularization to image restoration with impulse noise, demonstrating superior results compared to conventional methodologies. From a theoretical perspective, \cite{DH19} provided a comprehensive analysis of the well-posedness for the non-convex $\ell_{1}-\eta\ell_{2}$ regularization and developed a ST-$\left(\alpha\ell_{1}-\beta\ell_{2}\right)$ algorithm for addressing. However, research on nonlinear sparse inverse problems remains limited. \cite{BLR15} introduced iterative algorithms for minimizing non-smooth, non-convex functionals, including Tikhonov functionals with nonlinear operators. \cite{DH24} extends the ST-$\left(\alpha\ell_{1}-\beta\ell_{2}\right)$ algorithm, employing iterative soft thresholding to tackle non-convex $\ell_{1}-\eta\ell_{2}$ regularization, which effectively addresses nonlinear sparse regularization problems and yields promising results.

\par Compared to the $\ell_{1}-\eta\ell_{2}$ regularization, the $\ell_{1}^{2}-\eta\ell_{2}^{2}$ regularization is also a non-convex technique, which the $\ell_{2}^{2}$ term is differentiable and does not yield the $\eta x/\left\|x\right\|_{\ell_{2}}$ term. However, the proximal operator for $\ell_{1}^{2}$ differs from the traditional soft thresholding operator. The specific form of the proximal operator for $\ell_{1}^{2}$ is discussed in \cite{B17}, while \cite{LD25} provides an analysis of its regularization properties and the corresponding solution algorithms for the linear case. Consequently, we aim to extend the $\ell_{1}^{2}-\eta\ell_{2}^{2}$ regularization method to the nonlinear scenario.

\subsection{Contribution and organization}\label{sec1.2}

\par In this paper, we address the nonlinear ill-posed inverse problem denoted as \eqref{equ1.1} using the following regularization method:
\begin{equation}\label{equ2.4}
    \min_{x\in\ell_{2}}\left\{\mathcal{J}_{\alpha,\beta}^{\delta}\left(x\right)= \frac{1}{q} \left\|F\left(x\right)-y^{\delta}\right\|_{Y}^{q}+ \mathcal{R}_{\alpha,\beta}(x_{n})\right\},
\end{equation}
where $q\geq 1$ and
\begin{equation*}
    \mathcal{R}_{\alpha,\beta}(x):=\alpha\left\|x\right\|_{\ell_{1}}^{2}- \beta\left\|x\right\|_{\ell_{2}}^{2},\quad \alpha \geq\beta >0.
\end{equation*}
Let $\eta=\beta/\alpha$, and problem \eqref{equ2.4} can be reformulated as follows:
\begin{equation}\label{equ1.2}
    \min_{x\in\ell_{2}}\left\{\mathcal{J}_{\alpha,\eta}^{\delta}\left(x\right)= \frac{1}{q}\left\|F\left(x\right)-y^{\delta}\right\|_{Y}^{q}+ \alpha\mathcal{R}_{\eta}\left(x\right)\right\},
\end{equation}
where $q\geq 1$ and 
\begin{equation}\label{equ1.3}
    \mathcal{R}_{\eta}\left(x\right):=\left\|x\right\|_{\ell_{1}}^{2} -\eta\left\|x\right\|_{\ell_{2}}^{2},\quad 0<\eta\leq 1.
\end{equation}
We will investigate the well-posedness of the problem \eqref{equ1.2}. For the case $0<\eta <1$, we demonstrate the existence, stability, and convergence of regularized solutions under the condition that the nonlinear operator $F$ is weakly sequentially closed. The numerical results reported in \cite{LD25} indicate satisfactory outcomes can be achieved when $\eta=1$. Actually, $\mathcal{R}_{\eta}$  behaves increasingly like a constant multiple of the $\ell_{0}$-norm as $\eta\rightarrow 1$. Therefore, we analyze the properties of $\mathcal{R}_{\eta}$ when $\eta=1$, although the well-posedness results for this case are not as strong as those for $0<\eta <1$. For the case $0<\eta <1$, we identify the convergence rate under an appropriate source condition. To analyze convergence rates, we impose restrictions on the nonlinearity of the operator $F$. Typically, these restrictions are used to bound the critical term $\left<F'\left(x^{\dagger}\right)\left(x-x^{\dagger}\right),\omega_{i}\right>$ when deriving convergence rate results. Under two commonly adopted conditions regarding the nonlinearity of $F$,  we obtain convergence rates of $\mathcal{O}\left(\delta^{1/2}\right)$ and $\mathcal{O}\left(\delta\right)$ for the regularized solution in the $\ell_{2}$-norm, respectively.

\par For the minimization problem \eqref{equ1.2}, \cite{DH19} proposes an iterative soft thresholding algorithm based on the generalized conditional gradient method (GCGM) to solve non-convex $\ell_{1}-\eta\ell_{2}$ sparse regularization problems. However, the GCGM requires solving two separate optimization subproblems to determine both the descent direction and step size during the iterative process. However, convergence cannot be theoretically guaranteed under a fixed step size strategy. Therefore, inspired by the proximal operator of $\left\|x\right\|_{\ell_{1}}^{2}$, we propose a proximal gradient method incorporating a fixed step size scheme for solving non-convex $\ell_{1}^{2}-\eta\ell_{2}^{2}$ sparse regularization in the context of nonlinear inverse problems. Specifically, for the case where $q=2$, we reformulate the functional  $\mathcal{J}_{\alpha,\eta}^{\delta}\left(x\right)$ in \eqref{equ1.2} as follow
\begin{equation*}
    \mathcal{J}_{\alpha,\eta}^{\delta}\left(x\right)=f\left(x\right)+g\left(x\right),
\end{equation*} 
where 
\begin{equation*}
    f\left(x\right)=\frac{1}{2}\left\|F\left(x\right)- y^{\delta}\right\|_{\ell_{2}}^{2}-\alpha\eta\left\|x\right\|_{\ell_{2}}^{2} \quad {\rm and}\quad g\left(x\right)= \alpha\left\|x\right\|_{\ell_{1}}^{2}.
\end{equation*}
Furthermore, we establish that the iterative half variation algorithm converges if the nonlinear operator $F$ satisfies the corresponding conditions so that the functions $f\left(x\right)$ and $g\left(x\right)$ meet Assumption \ref{ass4.1}.

\par The remainder of the paper is structured as follows. Section \ref{sec2} analyzes the well-posedness of the $\left\|\cdot\right\|_{\ell_{1}}^{2}- \eta\left\|\cdot\right\|_{\ell_{2}}^{2}$ $\left(0<\eta\leq 1\right)$ regularization. In Section \ref{sec3}, we derive the convergence rates in the $\ell_{2}$-norm under an appropriate source condition, alongside two commonly acknowledged conditions regarding the nonlinearity of $F$. Section \ref{sec4} presents the iterative half variation algorithm based on the proximal gradient method and discusses its convergence properties. Finally, we provide several numerical experiments in Section \ref{sec5}.

\section{Well-posedness of regularization problem}\label{sec2}

\par In this section, we analyze the well-posedness of the regularization method, focusing on the existence, stability, and convergence of regularized solutions. For the case where $\eta=1$, the lack of coercivity and the Radon-Riesz property leads to a well-posedness result weaker than that achieved when $0<\eta<1$.

\par Let us denote a general minimizer of the functional $\mathcal{J}_{\alpha,\eta}^{\delta}\left(x\right)$ by $x_{\alpha,\eta}^{\delta}$, stated as follows:
\begin{equation}\label{equ2.1}
    x_{\alpha,\eta}^{\delta}\in\arg\min_{x\in\ell_{2}}\left\{\mathcal{J}_{\alpha,\eta}^{\delta}\left(x\right) =\frac{1}{q}\left\|F\left(x\right)-y^{\delta}\right\|_{Y}^{q}+\alpha\mathcal{R}_{\eta} \left(x\right)\right\}.
\end{equation}

\begin{definition}\label{def2.1}
    An element $x^{\dagger}\in\ell_{2}$ is defined as an $\mathcal{R}_{\eta}$-minimum solution to the problem \eqref{equ1.1} if
    \begin{equation*}
        x^{\dagger}\in\arg\min_{x\in\ell_{2}}\left\{\mathcal{R}_{\eta}\left(x\right)\mid F\left(x\right)=y\right\}.
    \end{equation*}
\end{definition}

\begin{definition}\label{def2.2}
    An element $x\in\ell_{2}$ is referred to as sparse if ${\rm supp}\left(x\right):=\left\{i\in\mathbb{N} \mid x_{i}\neq 0 \right\}$ is finite, where $x_{i}$ represents the $i$th component of $x$.
\end{definition}

\par To characterize sparsity, following the approach in \cite{DDD04}, we define the index set
\begin{equation}\label{equ2.2}
    I\left(x^{\dagger}\right)=\left\{i\in\mathbb{N}\mid x_{i}^{\dagger}\neq0\right\},
\end{equation}
where $x_{i}^{\dagger}$ is the $i$th component of $x^{\dagger}$. Subsequently, we present a result regarding the non-negativity of $\mathcal{R}_{\eta}\left(x\right)$.

\subsection{The case $0<\eta<1$}\label{sec2.1}

\par We first review some essential properties of $\mathcal{R}_{\eta}\left(x\right)$ $\left(0<\eta <1\right)$, which serve as crucial tools in analyzing the well-posedness of the regularization, cf. \cite{LD25} for proofs.

\begin{lemma}\label{lem2.4}
    The functional $\mathcal{R}_{\eta}\left(x\right)$ $\left(0<\eta <1\right)$ possesses the following properties:\\
    (1) (Coercivity) For $x\in\ell_{2}$, $\left\|x\right\|_{\ell_{2}}\rightarrow \infty$ implies $\mathcal{R}_{\eta}\left(x\right)\rightarrow\infty$.\\
    (2) (Weak lower semi-continuity) If $x_{n}\rightharpoonup x$ in $\ell_{2}$ and $\left\{\mathcal{R}_{\eta}\left(x_{n}\right)\right\}$ is bounded, then 
    \begin{equation*}
        \lim\inf_{n}\mathcal{R}_{\eta}\left(x_{n}\right)\geq\mathcal{R}_{\eta}\left(x\right).
    \end{equation*}
    (3) (Radon-Riesz property) If $x_{n}\rightharpoonup x$ in $\ell_{2}$ and $\mathcal{R}_{\eta}\left(x_{n}\right)\rightarrow\mathcal{R}_{\eta}\left(x\right)$, then $\left\|x_{n}-x\right\|_{\ell_{2}}\rightarrow 0$.
\end{lemma}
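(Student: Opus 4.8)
The plan is to reduce all three properties to a single Brezis--Lieb--type splitting of the two norms, after first recording the elementary bound that underlies everything. Since $\|x\|_{\ell_1}\ge\|x\|_{\ell_2}$ for every $x\in\ell_2$, one has
\[
\mathcal{R}_{\eta}(x)=\|x\|_{\ell_1}^2-\eta\|x\|_{\ell_2}^2\ge(1-\eta)\|x\|_{\ell_2}^2\ge0,
\]
where $1-\eta>0$ is decisive. Property (1) is then immediate: $\mathcal{R}_{\eta}(x)\ge(1-\eta)\|x\|_{\ell_2}^2\to\infty$ as $\|x\|_{\ell_2}\to\infty$.

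For (2) and (3), first I would extract boundedness. If $\{\mathcal{R}_{\eta}(x_n)\}$ is bounded by $M$, the display above gives $\|x_n\|_{\ell_2}^2\le M/(1-\eta)$, and then $\|x_n\|_{\ell_1}^2=\mathcal{R}_{\eta}(x_n)+\eta\|x_n\|_{\ell_2}^2$ is bounded as well; in (3) the hypothesis $\mathcal{R}_{\eta}(x_n)\to\mathcal{R}_{\eta}(x)$ supplies the same bound. Next, weak convergence $x_n\rightharpoonup x$ in $\ell_2$ forces componentwise convergence $x_{n,i}\to x_i$, so the Brezis--Lieb lemma applies to both norms: writing $r_n:=\|x_n-x\|_{\ell_1}$ and $s_n:=\|x_n-x\|_{\ell_2}$, I get $\|x_n\|_{\ell_1}=r_n+\|x\|_{\ell_1}+o(1)$ (the $p=1$ case, using the $\ell_1$-boundedness) and the elementary Hilbert-space identity $\|x_n\|_{\ell_2}^2=s_n^2+\|x\|_{\ell_2}^2+o(1)$. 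Squaring the first (the $o(1)$ survives because $r_n$ is bounded) and combining yields the key decomposition
\[
\mathcal{R}_{\eta}(x_n)=\mathcal{R}_{\eta}(x)+\bigl(r_n^2+2\|x\|_{\ell_1}\,r_n-\eta s_n^2\bigr)+o(1).
\]

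The crux is to control the bracketed ``escaping mass'' term, and here the inequality $r_n\ge s_n$ (again $\ell_1\ge\ell_2$, applied to $x_n-x$) together with $\eta<1$ does the work:
\[
r_n^2+2\|x\|_{\ell_1}\,r_n-\eta s_n^2\ge(1-\eta)r_n^2+2\|x\|_{\ell_1}\,r_n\ge0.
\]
Taking $\liminf$ in the decomposition gives $\liminf_n\mathcal{R}_{\eta}(x_n)\ge\mathcal{R}_{\eta}(x)$, which is (2). For (3), the hypothesis makes the left side of the decomposition tend to $\mathcal{R}_{\eta}(x)$, so the nonnegative bracket tends to $0$; squeezing between $0$ and the bracket forces $(1-\eta)r_n^2\to0$, hence $r_n\to0$ and therefore $s_n=\|x_n-x\|_{\ell_2}\le r_n\to0$.

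I expect the main obstacle to be conceptual rather than computational: because of the minus sign, $-\eta\|\cdot\|_{\ell_2}^2$ is weakly \emph{upper} semicontinuous, so $\mathcal{R}_{\eta}$ is not a sum of weakly lower semicontinuous functionals and the textbook argument fails outright. The Brezis--Lieb splitting is precisely what repairs this, by exposing that mass drifting to infinity is charged quadratically-plus-linearly in $\ell_1$ but only by the factor $\eta<1$ in $\ell_2$. This also pinpoints why the paper's $\eta=1$ case is genuinely weaker: at $\eta=1$ the bracket degenerates to $2\|x\|_{\ell_1}\,r_n$, which vanishes whenever $x=0$ even though $r_n\not\to0$ (take $x_n=e_n$), so both coercivity and the Radon--Riesz property break down.
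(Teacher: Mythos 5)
Your proof is correct, but be aware that the paper itself never proves Lemma \ref{lem2.4}: it explicitly defers to the authors' earlier work (``cf.\ \cite{LD25} for proofs''), so your argument is a self-contained substitute rather than a variant of an in-paper proof. Checking your steps: the bound $\mathcal{R}_{\eta}(x)\geq(1-\eta)\left\|x\right\|_{\ell_{2}}^{2}$ does give coercivity and the $\ell_{1}$/$\ell_{2}$ bounds; weak $\ell_{2}$-convergence gives componentwise convergence, so the $p=1$ Brezis--Lieb splitting and the Hilbert-space identity both apply; squaring is harmless since $r_{n}$ is bounded; and the bracket estimate via $s_{n}\leq r_{n}$ isolates exactly where $\eta<1$ enters. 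Two small caveats. First, the $p=1$ case of Brezis--Lieb is rarely quoted, so include its one-line justification: $\bigl|\,|x_{n,i}|-|x_{n,i}-x_{i}|-|x_{i}|\,\bigr|\leq 2|x_{i}|$ and each term tends to $0$, so dominated convergence applies (this also uses $x\in\ell_{1}$, which follows from Fatou and the $\ell_{1}$-bound). Second, in part (3) your claim that the hypothesis ``supplies the same bound'' needs $\mathcal{R}_{\eta}(x)<\infty$, i.e.\ $x\in\ell_{1}$; this finiteness is an implicit hypothesis of the lemma, since the statement is false for $x\in\ell_{2}\setminus\ell_{1}$ (take $x_{n}$ to be the $n$-term truncation of $x$ plus $e_{k_{n}}$ with $k_{n}\to\infty$: then $x_{n}\rightharpoonup x$, $\mathcal{R}_{\eta}(x_{n})\to\infty=\mathcal{R}_{\eta}(x)$, yet $\left\|x_{n}-x\right\|_{\ell_{2}}\to 1$). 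You should flag that assumption explicitly.

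For comparison, a shorter route exists that is more consonant with the machinery this paper actually uses: the proof of Lemma \ref{lem3.4} introduces $\mathcal{K}(x):=\left\|x\right\|_{\ell_{1}}^{2}-\left\|x\right\|_{\ell_{2}}^{2}$ and the splitting $\mathcal{R}_{\eta}(x)=\mathcal{K}(x)+(1-\eta)\left\|x\right\|_{\ell_{2}}^{2}$. Since $\mathcal{K}(x)=2\sum_{i<j}|x_{i}||x_{j}|\geq 0$ is weakly lower semicontinuous by Fatou (using componentwise convergence), and $\left\|\cdot\right\|_{\ell_{2}}^{2}$ is weakly lower semicontinuous, coercivity and part (2) are immediate; for part (3), convergence of the sum of two weakly lsc pieces to the sum of their limit values forces $\left\|x_{n}\right\|_{\ell_{2}}\to\left\|x\right\|_{\ell_{2}}$, and weak convergence plus norm convergence in a Hilbert space gives strong convergence. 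This splitting also shows at a glance why weak lsc survives at $\eta=1$ while coercivity and Radon--Riesz die there: the coercive piece $(1-\eta)\left\|x\right\|_{\ell_{2}}^{2}$ vanishes. What your Brezis--Lieb decomposition buys instead is quantitative information --- a lower bound $(1-\eta)\left\|x_{n}-x\right\|_{\ell_{1}}^{2}$ on the defect, hence $\ell_{1}$- (not just $\ell_{2}$-) convergence in part (3) --- and it makes the $\eta=1$ degeneracy of Examples \ref{exa2.9} and \ref{exa2.11} transparent, exactly as your final paragraph observes. Both approaches are legitimate; the $\mathcal{K}$-splitting is simply shorter.
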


\begin{lemma}
    Assume the sequence $\left\{\left\|y_{n}\right\|_{Y}\right\}$ is bounded in $Y$. For a given $M>0$, let $x_{n}\in\ell_{2}$, $n=1,2,\cdots$ and consider
    \begin{equation}\label{equ2.3}
        \frac{1}{q}\left\|F\left(x_{n}\right)-y_{n}\right\|_{Y}^{q}+\alpha\mathcal{R}_{\eta} \left(x_{n}\right)\leq M.
    \end{equation}
    Then, there exist an element $x\in\ell_{2}$ and a subsequence $\left\{x_{n_{k}}\right\}$ of $\left\{x_{n}\right\}$ such that $x_{n_{k}}\rightharpoonup x$ and $F\left(x_{n_{k}}\right)\rightharpoonup F\left(x\right)$.
\end{lemma}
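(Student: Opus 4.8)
The plan is to combine the coercivity of $\mathcal{R}_{\eta}$ with the weak sequential closedness of $F$ in a standard two-stage compactness argument. First I would exploit the non-negativity of $\mathcal{R}_{\eta}$: since $\left\|x\right\|_{\ell_{1}}\geq\left\|x\right\|_{\ell_{2}}$ for every $x\in\ell_{2}$, one has $\mathcal{R}_{\eta}\left(x\right)=\left\|x\right\|_{\ell_{1}}^{2}-\eta\left\|x\right\|_{\ell_{2}}^{2}\geq(1-\eta)\left\|x\right\|_{\ell_{2}}^{2}\geq 0$. Because the data misfit term $\frac{1}{q}\left\|F\left(x_{n}\right)-y_{n}\right\|_{Y}^{q}$ is likewise non-negative, the single bound \eqref{equ2.3} splits into the two separate estimates $\alpha\mathcal{R}_{\eta}\left(x_{n}\right)\leq M$ and $\frac{1}{q}\left\|F\left(x_{n}\right)-y_{n}\right\|_{Y}^{q}\leq M$, valid for all $n$.

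From $\mathcal{R}_{\eta}\left(x_{n}\right)\leq M/\alpha$ and the coercivity in Lemma~\ref{lem2.4}(1) I would deduce that $\left\{\left\|x_{n}\right\|_{\ell_{2}}\right\}$ is bounded; concretely, the inequality $\mathcal{R}_{\eta}\left(x_{n}\right)\geq(1-\eta)\left\|x_{n}\right\|_{\ell_{2}}^{2}$ gives the explicit bound $\left\|x_{n}\right\|_{\ell_{2}}^{2}\leq M/(\alpha(1-\eta))$, which is where the restriction $\eta<1$ is used. Since $\ell_{2}$ is a Hilbert space and hence reflexive, the bounded sequence $\left\{x_{n}\right\}$ admits a weakly convergent subsequence $x_{n_{k}}\rightharpoonup x$ for some $x\in\ell_{2}$.

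Next I would obtain weak convergence of the images. The bound $\frac{1}{q}\left\|F\left(x_{n}\right)-y_{n}\right\|_{Y}^{q}\leq M$ together with the assumed boundedness of $\left\{\left\|y_{n}\right\|_{Y}\right\}$ yields, via the triangle inequality, that $\left\{\left\|F\left(x_{n}\right)\right\|_{Y}\right\}$ is bounded. As $Y$ is a Hilbert space, I may pass to a further subsequence of the one already chosen, along which $F\left(x_{n_{k}}\right)\rightharpoonup z$ for some $z\in Y$ while still $x_{n_{k}}\rightharpoonup x$. The weak sequential closedness of $F$ then forces $z=F\left(x\right)$, so that $F\left(x_{n_{k}}\right)\rightharpoonup F\left(x\right)$, which is the desired conclusion.

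The argument is essentially a compactness-plus-closedness routine, so no single step is genuinely difficult; the only point requiring care is the ordering of the two subsequence extractions, since weak sequential closedness must be invoked on one common subsequence along which both $x_{n_{k}}\rightharpoonup x$ and $F\left(x_{n_{k}}\right)\rightharpoonup z$ hold simultaneously. I would therefore extract the weakly convergent $x$-subsequence first and then thin it further to secure weak convergence of the images, so that the hypotheses of weak sequential closedness are met on the same index set and the conclusion holds for a single subsequence $\left\{x_{n_{k}}\right\}$.
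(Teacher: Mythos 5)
Your proposal is correct and follows essentially the same route as the paper: bound $\mathcal{R}_{\eta}\left(x_{n}\right)$ and the misfit term separately, use coercivity of $\mathcal{R}_{\eta}$ (for which you give the nice explicit estimate $\mathcal{R}_{\eta}\left(x\right)\geq\left(1-\eta\right)\left\|x\right\|_{\ell_{2}}^{2}$, where the paper simply cites Lemma \ref{lem2.4}(1)) to get boundedness of $\left\{x_{n}\right\}$, deduce boundedness of $\left\{F\left(x_{n}\right)\right\}$ from the boundedness of $\left\{y_{n}\right\}$, extract a common weakly convergent subsequence in both spaces, and invoke weak sequential closedness of $F$ to identify the weak limit of the images. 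Your care about performing both extractions along one index set matches the paper's (implicit) handling of the same point.
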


\begin{proof}
    From \eqref{equ2.3}, it follows that $\left\{\mathcal{R}_{\eta}\left(x_{n}\right)\right\}$ is bounded. Consequently, by the coercivity of $\mathcal{R}_{\eta}\left(x\right)$, we deduce that $\left\{\left\|x_{n}\right\|_{\ell_{2}}\right\}$ is also bounded. Furthermore, since $\left\{\left\|y_{n}\right\|_{Y}\right\}$ is bounded, it follows that $\left\{\left\|F\left(x_{n}\right)\right\|_{Y}\right\}$ is bounded as well. Hence, there exists a subsequence $\left\{x_{n_{k}}\right\}$ of $\left\{x_{n}\right\}$, along with $x\in\ell_{2}$ and $y\in Y$, such that
    \begin{equation*}
        x_{n_{k}}\rightharpoonup x\ {\rm in}\ \ell_{2}, \quad F\left(x_{n_{k}}\right)\rightharpoonup y\ {\rm in}\ Y.
    \end{equation*}
    Since $F$ is weakly sequentially closed, we conclude that $F\left(x\right)=y$. This completes the proof of the lemma.
\end{proof}

\par By the stipulations outlined in Lemma \ref{lem2.4}, we can derive the existence, stability, and convergence results about the regularized solution by following the established arguments of classical Tikhonov theory \cite{EHN96, IJ14}. To streamline this discussion, we refrain from including the associated proofs in this article.

\begin{theorem}\label{the2.6}
    \textbf{(Existence)} For any $y^{\delta}\in Y$ and $\alpha>\beta> 0$, there exists at least one minimizer of $\mathcal{J}_{\alpha,\beta}^{\delta}\left(x\right)$ in $\ell_{2}$.
\end{theorem}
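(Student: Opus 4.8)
The plan is to apply the direct method of the calculus of variations. Since the hypothesis $\alpha>\beta>0$ is equivalent to $0<\eta=\beta/\alpha<1$, all three properties of Lemma \ref{lem2.4} are in force, and the functional in question is just $\mathcal{J}_{\alpha,\eta}^{\delta}$ in the $(\alpha,\eta)$ parametrization. First I would record that $\mathcal{J}_{\alpha,\eta}^{\delta}$ is bounded below: the data term $\frac{1}{q}\|F(x)-y^{\delta}\|_{Y}^{q}$ is non-negative by construction, and $\mathcal{R}_{\eta}(x)\geq 0$ for $0<\eta<1$ by the non-negativity result announced just before Section \ref{sec2.1} (which itself follows from $\|x\|_{\ell_{2}}\leq\|x\|_{\ell_{1}}$). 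Hence the infimum $m:=\inf_{x\in\ell_{2}}\mathcal{J}_{\alpha,\eta}^{\delta}(x)$ is finite and non-negative.

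Next I would take a minimizing sequence $\{x_{n}\}\subset\ell_{2}$ with $\mathcal{J}_{\alpha,\eta}^{\delta}(x_{n})\to m$, so that $\mathcal{J}_{\alpha,\eta}^{\delta}(x_{n})\leq M$ for some $M>0$ and all sufficiently large $n$. Applying the preceding lemma with the constant (hence trivially bounded) choice $y_{n}\equiv y^{\delta}$ produces a subsequence $\{x_{n_{k}}\}$ and an element $x\in\ell_{2}$ with
\begin{equation*}
    x_{n_{k}}\rightharpoonup x\ \text{in}\ \ell_{2},\qquad F(x_{n_{k}})\rightharpoonup F(x)\ \text{in}\ Y.
\end{equation*}
Here the coercivity in Lemma \ref{lem2.4}(1) is what converts the uniform bound on $\{\mathcal{R}_{\eta}(x_{n})\}$ into boundedness of $\{\|x_{n}\|_{\ell_{2}}\}$, and the weak sequential closedness of $F$ is what secures $F(x_{n_{k}})\rightharpoonup F(x)$ rather than convergence to some unrelated limit.

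Finally I would pass to the $\liminf$ in each term. Since $F(x_{n_{k}})-y^{\delta}\rightharpoonup F(x)-y^{\delta}$ and the norm of a Hilbert space is weakly lower semicontinuous, while $t\mapsto t^{q}$ is continuous and nondecreasing for $q\geq 1$, one gets $\liminf_{k}\frac{1}{q}\|F(x_{n_{k}})-y^{\delta}\|_{Y}^{q}\geq\frac{1}{q}\|F(x)-y^{\delta}\|_{Y}^{q}$; and since $\{\mathcal{R}_{\eta}(x_{n_{k}})\}$ is bounded, Lemma \ref{lem2.4}(2) gives $\liminf_{k}\mathcal{R}_{\eta}(x_{n_{k}})\geq\mathcal{R}_{\eta}(x)$. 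Using the superadditivity $\liminf(a_{k}+b_{k})\geq\liminf a_{k}+\liminf b_{k}$ then yields $m=\liminf_{k}\mathcal{J}_{\alpha,\eta}^{\delta}(x_{n_{k}})\geq\mathcal{J}_{\alpha,\eta}^{\delta}(x)\geq m$, so $x$ attains the infimum and is the desired minimizer. The only genuinely delicate point is controlling the nonlinear data term under mere weak convergence of the arguments; this is exactly where weak sequential closedness of $F$ is indispensable, and since it has already been packaged into the lemma invoked above, no further nonlinearity assumption is required for existence.
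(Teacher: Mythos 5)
Your proof is correct and is exactly the argument the paper has in mind: the paper omits the proof of Theorem \ref{the2.6}, stating that it follows from Lemma \ref{lem2.4} and the preceding weak-closedness lemma "by the established arguments of classical Tikhonov theory," which is precisely the direct method you carry out (minimizing sequence, coercivity to get weak compactness, weak sequential closedness of $F$, and weak lower semicontinuity of both terms). No gaps; your handling of the bounded $\{\mathcal{R}_{\eta}(x_{n_k})\}$ hypothesis in Lemma \ref{lem2.4}(2) and of the nondecreasing map $t\mapsto t^{q}$ is exactly what a written-out version would need.
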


\begin{theorem}\label{the2.7}
    \textbf{(Stability)} Let $\alpha_{n}>\beta_{n}> 0$, $\alpha_{n}\rightarrow \alpha$ $\left(\alpha>0\right)$, $\beta_{n}\rightarrow \beta$ $\left(\beta>0\right)$ as $n\rightarrow\infty$. Let the sequence $\left\{y_{n}\right\}\subset Y$ be convergent to $y^{\delta}\in Y$, and let $x_{n}$ be a minimizer to $\mathcal{J}_{\alpha_{n},\beta_{n}}^{\delta_{n}}\left(x\right)$. Then the sequence $\left\{x_{n}\right\}$ contains a subsequence $\left\{x_{n_{k}}\right\}$ converging to a minimizer of $\mathcal{J}_{\alpha,\beta}^{\delta}\left(x\right)$. Furthermore, if $\mathcal{J}_{\alpha,\beta}^{\delta}(x)$ has an unique minimizer $x_{\alpha,\beta}^{\delta}$, then $\lim_{n\rightarrow +\infty}\left\|x_{n}- x_{\alpha,\beta}^{\delta}\right\|_{\ell_{2}}=0$.
\end{theorem}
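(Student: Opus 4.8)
The plan is to follow the classical Tikhonov stability scheme, adapted to the non-convex regularizer $\mathcal{R}_\eta$ and to the varying parameters. Write $y_n$ for the data defining $\mathcal{J}_{\alpha_n,\beta_n}^{\delta_n}$, so this functional is $\frac{1}{q}\|F(\cdot)-y_n\|_Y^q+\mathcal{R}_{\alpha_n,\beta_n}(\cdot)$. First I would extract uniform bounds. Testing the minimality of $x_n$ against the fixed element $0$ gives $\frac{1}{q}\|F(x_n)-y_n\|_Y^q+\mathcal{R}_{\alpha_n,\beta_n}(x_n)\le\frac{1}{q}\|F(0)-y_n\|_Y^q$, and since $y_n\to y^\delta$ the right-hand side is bounded by some $M$. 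Because $\alpha_n>\beta_n$ and $\|x\|_{\ell_1}\ge\|x\|_{\ell_2}$, one has $\mathcal{R}_{\alpha_n,\beta_n}(x_n)\ge(\alpha_n-\beta_n)\|x_n\|_{\ell_2}^2\ge 0$; together with $\alpha_n-\beta_n\to\alpha-\beta>0$ this bounds $\|x_n\|_{\ell_2}$, and then $\alpha_n\|x_n\|_{\ell_1}^2=\mathcal{R}_{\alpha_n,\beta_n}(x_n)+\beta_n\|x_n\|_{\ell_2}^2$ with $\alpha_n\to\alpha>0$ bounds $\|x_n\|_{\ell_1}$. Thus $\{x_n\}$ is bounded in both $\ell_1$ and $\ell_2$, and the fidelity bound shows $\{F(x_n)\}$ is bounded in $Y$.

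Next I would pass to a weakly convergent subsequence $x_{n_k}\weak\bar x$ in $\ell_2$; since $\{F(x_{n_k})\}$ is bounded in the Hilbert space $Y$ it has a weakly convergent sub-subsequence, and weak sequential closedness of $F$ forces its limit to be $F(\bar x)$ (this is exactly the content of the lemma immediately following Lemma \ref{lem2.4}). To see that $\bar x$ minimizes $\mathcal{J}_{\alpha,\beta}^\delta$, I would pass to the limit term by term. For the fidelity term, $F(x_{n_k})-y_{n_k}\weak F(\bar x)-y^\delta$ and weak lower semicontinuity of $\|\cdot\|_Y$ give $\|F(\bar x)-y^\delta\|_Y^q\le\liminf_k\|F(x_{n_k})-y_{n_k}\|_Y^q$. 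For the regularization term, the key point is to reduce the varying parameters to the fixed ones: since $\|x_{n_k}\|_{\ell_1},\|x_{n_k}\|_{\ell_2}$ are bounded and $\alpha_{n_k}\to\alpha$, $\beta_{n_k}\to\beta$, one has $\mathcal{R}_{\alpha_{n_k},\beta_{n_k}}(x_{n_k})=\mathcal{R}_{\alpha,\beta}(x_{n_k})+o(1)$, after which weak lower semicontinuity (Lemma \ref{lem2.4}(2), applied with fixed $\eta=\beta/\alpha$, noting $\mathcal{R}_{\alpha,\beta}=\alpha\mathcal{R}_\eta$) yields $\mathcal{R}_{\alpha,\beta}(\bar x)\le\liminf_k\mathcal{R}_{\alpha_{n_k},\beta_{n_k}}(x_{n_k})$. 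Adding the two estimates gives $\mathcal{J}_{\alpha,\beta}^\delta(\bar x)\le\liminf_k\mathcal{J}_{\alpha_{n_k},\beta_{n_k}}^{\delta_{n_k}}(x_{n_k})$; comparing with the minimality inequality $\mathcal{J}_{\alpha_{n_k},\beta_{n_k}}^{\delta_{n_k}}(x_{n_k})\le\mathcal{J}_{\alpha_{n_k},\beta_{n_k}}^{\delta_{n_k}}(x)$, whose right-hand side converges to $\mathcal{J}_{\alpha,\beta}^\delta(x)$ for each fixed $x$, shows $\mathcal{J}_{\alpha,\beta}^\delta(\bar x)\le\mathcal{J}_{\alpha,\beta}^\delta(x)$ for all $x$, so $\bar x$ is a minimizer.

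The main obstacle is upgrading weak convergence to strong convergence, and this is where I expect to spend the most care. Taking $x=\bar x$ in the minimality comparison forces $\mathcal{J}_{\alpha_{n_k},\beta_{n_k}}^{\delta_{n_k}}(x_{n_k})\to\mathcal{J}_{\alpha,\beta}^\delta(\bar x)$. Since the fidelity term and the regularization term are each weakly lower semicontinuous with $\liminf$ bounded below by their values at $\bar x$, and their sum converges to the sum of those values, a standard squeezing argument (if $a_k+b_k\to A+B$ with $\liminf a_k\ge A$ and $\liminf b_k\ge B$, then $a_k\to A$ and $b_k\to B$) forces each term to converge to its limit; in particular $\mathcal{R}_{\alpha_{n_k},\beta_{n_k}}(x_{n_k})\to\mathcal{R}_{\alpha,\beta}(\bar x)$, hence $\mathcal{R}_\eta(x_{n_k})\to\mathcal{R}_\eta(\bar x)$ after removing the $o(1)$ from the parameter perturbation. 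The Radon-Riesz property Lemma \ref{lem2.4}(3) then gives $\|x_{n_k}-\bar x\|_{\ell_2}\to 0$. This is precisely the step that would fail at $\eta=1$, where the Radon-Riesz property is unavailable. Finally, for the uniqueness statement I would run the standard subsequence principle: if $\mathcal{J}_{\alpha,\beta}^\delta$ has the unique minimizer $x_{\alpha,\beta}^\delta$, then the argument above applies to every subsequence of $\{x_n\}$, producing a further subsequence converging strongly to a minimizer, necessarily $x_{\alpha,\beta}^\delta$; since every subsequence admits a sub-subsequence with the same limit, the whole sequence converges strongly to $x_{\alpha,\beta}^\delta$.
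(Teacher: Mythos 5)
Your proposal is correct and is essentially the argument the paper intends: the paper omits the proof of Theorem \ref{the2.7}, appealing to classical Tikhonov stability theory built on Lemma \ref{lem2.4} (coercivity, weak lower semi-continuity, Radon-Riesz) together with the weak-closedness lemma, and your proof fills in exactly those steps, including the right treatment of the varying parameters $(\alpha_{n},\beta_{n})$ as an $o(1)$ perturbation of $\mathcal{R}_{\alpha,\beta}$ and the standard squeezing and subsequence arguments. The only point worth flagging is that your $\ell_{2}$-bound uses $\alpha_{n}-\beta_{n}\rightarrow\alpha-\beta>0$, i.e.\ $\eta=\beta/\alpha<1$ in the limit, which the theorem statement does not assert explicitly but which is implicit in the Section \ref{sec2.1} setting and is genuinely needed, since the Radon-Riesz property fails at $\eta=1$.
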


\begin{theorem}\label{the2.8}
    \textbf{(Convergence)} Let $\alpha_{n}:=\alpha\left(\delta_{n}\right)$, $\beta_{n}:=\beta\left(\delta_{n}\right)$, $\alpha_{n}>\beta_{n}> 0$ satisfy
    \begin{equation*}
        \lim_{n\rightarrow +\infty}\alpha_{n}=0,\quad \lim_{n\rightarrow +\infty}\beta_{n}=0\quad {\rm and} \quad \lim_{n\rightarrow +\infty}\frac{\delta_{n}^{p}}{\alpha_{n}}=0.
    \end{equation*}
    Assume $\eta=\lim_{n\rightarrow +\infty}\eta_{n}\in\left[0,1\right)$ exists, where $\eta_{n}=\beta_{n}/\alpha_{n}$. Let $\delta_{n}\rightarrow 0$ as $n\rightarrow +\infty$ and $y^{\delta_{n}}$ satisfy $\left\|y-y^{\delta_{n}}\right\|\leq\delta_{n}$. Moreover, let
    \begin{equation*}
        x_{\alpha_{n},\beta_{n}}^{\delta_{n}}\in \arg\min_{x\in\ell_{2}} \mathcal{J}_{\alpha_{n},\beta_{n}}^{\delta_{n}}\left(x\right).
    \end{equation*}
    Then $\left\{x_{\alpha_{n},\beta_{n}}^{\delta_{n}}\right\}$ has a subsequence, denoted by $\left\{x_{\alpha_{n_{k}},\beta_{n_{k}}}^{\delta_{n_{k}}}\right\}$ converging to an $\mathcal{R}_{\eta}$-minimizing solution $x^{\dagger}$ in $\ell_{2}$. Furthermore, if the $\mathcal{R}_{\eta}$-minimizing solution $x^{\dagger}$ is unique, then the entire sequence $\left\{x_{\alpha_{n},\beta_{n}}^{\delta_{n}}\right\}$ converges to $x^{\dagger}$ in $\ell_{2}$.
\end{theorem}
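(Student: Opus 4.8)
The plan is to run the classical Tikhonov convergence argument, paying extra attention to the fact that the parameter $\eta_{n}=\beta_{n}/\alpha_{n}$ varies with $n$ whereas the limiting regularizer is $\mathcal{R}_{\eta}$. I abbreviate $x_{n}:=x_{\alpha_{n},\beta_{n}}^{\delta_{n}}$ and recall that $\mathcal{R}_{\alpha_{n},\beta_{n}}(x)=\alpha_{n}\mathcal{R}_{\eta_{n}}(x)$. First I would fix an $\mathcal{R}_{\eta}$-minimizing solution $x^{\dagger}$ (which exists under the standing assumption that $F(x)=y$ is solvable, by the coercivity and weak lower semicontinuity of $\mathcal{R}_{\eta}$), so that $F(x^{\dagger})=y$. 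The minimality of $x_{n}$ together with $\left\|y-y^{\delta_{n}}\right\|_{Y}\le\delta_{n}$ gives
\[
\mathcal{J}_{\alpha_{n},\beta_{n}}^{\delta_{n}}(x_{n})\le\mathcal{J}_{\alpha_{n},\beta_{n}}^{\delta_{n}}(x^{\dagger})\le\tfrac{1}{q}\delta_{n}^{q}+\alpha_{n}\mathcal{R}_{\eta_{n}}(x^{\dagger}).
\]
Since both summands making up $\mathcal{J}_{\alpha_{n},\beta_{n}}^{\delta_{n}}(x_{n})$ are nonnegative, I read off two consequences: the residual bound $\left\|F(x_{n})-y^{\delta_{n}}\right\|_{Y}^{q}\le\delta_{n}^{q}+q\alpha_{n}\mathcal{R}_{\eta_{n}}(x^{\dagger})\to0$, whence $F(x_{n})\to y$; and the regularizer bound $\mathcal{R}_{\eta_{n}}(x_{n})\le\frac{\delta_{n}^{q}}{q\alpha_{n}}+\mathcal{R}_{\eta_{n}}(x^{\dagger})$, so that the parameter-choice rule ($\delta_{n}^{q}/\alpha_{n}\to0$, the exponent matching the data-fidelity term) together with $\mathcal{R}_{\eta_{n}}(x^{\dagger})\to\mathcal{R}_{\eta}(x^{\dagger})$ yields $\limsup_{n}\mathcal{R}_{\eta_{n}}(x_{n})\le\mathcal{R}_{\eta}(x^{\dagger})$.

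Next I would establish $\ell_{2}$-boundedness of $\{x_{n}\}$ through a \emph{uniform} coercivity estimate. Because $\eta_{n}\to\eta<1$, there is some fixed $\eta'\in(\eta,1)$ with $\eta_{n}\le\eta'$ for all large $n$; combined with $\left\|x\right\|_{\ell_{1}}\ge\left\|x\right\|_{\ell_{2}}$ this gives $\mathcal{R}_{\eta_{n}}(x_{n})\ge(1-\eta')\left\|x_{n}\right\|_{\ell_{2}}^{2}$, and the previous bound on $\mathcal{R}_{\eta_{n}}(x_{n})$ then forces $\{\left\|x_{n}\right\|_{\ell_{2}}\}$ to be bounded. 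As $F(x_{n})\to y$ also makes $\{\left\|F(x_{n})\right\|_{Y}\}$ bounded, I pass to a subsequence with $x_{n_{k}}\rightharpoonup x^{*}$ in $\ell_{2}$; the weak sequential closedness of $F$, applied with $F(x_{n_{k}})\to y$, yields $F(x^{*})=y$, so $x^{*}$ solves \eqref{equ1.1}.

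It then remains to identify $x^{*}$ as $\mathcal{R}_{\eta}$-minimizing and to strengthen the convergence. The device handling the varying exponent is the identity $\mathcal{R}_{\eta_{n_{k}}}(x_{n_{k}})=\mathcal{R}_{\eta}(x_{n_{k}})+(\eta-\eta_{n_{k}})\left\|x_{n_{k}}\right\|_{\ell_{2}}^{2}$, whose last term vanishes as $k\to\infty$ since $\eta_{n_{k}}\to\eta$ and $\left\|x_{n_{k}}\right\|_{\ell_{2}}$ stays bounded; hence the fixed- and varying-$\eta$ functionals share the same $\liminf$ along the subsequence. Weak lower semicontinuity (Lemma~\ref{lem2.4}(2)) then gives $\mathcal{R}_{\eta}(x^{*})\le\liminf_{k}\mathcal{R}_{\eta}(x_{n_{k}})=\liminf_{k}\mathcal{R}_{\eta_{n_{k}}}(x_{n_{k}})\le\mathcal{R}_{\eta}(x^{\dagger})$. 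Since $x^{*}$ is a solution and $x^{\dagger}$ is $\mathcal{R}_{\eta}$-minimizing, equality holds throughout, so $x^{*}$ is itself $\mathcal{R}_{\eta}$-minimizing and $\mathcal{R}_{\eta}(x_{n_{k}})\to\mathcal{R}_{\eta}(x^{*})$. The Radon--Riesz property (Lemma~\ref{lem2.4}(3)) upgrades $x_{n_{k}}\rightharpoonup x^{*}$ to $\left\|x_{n_{k}}-x^{*}\right\|_{\ell_{2}}\to0$. For the uniqueness assertion, I would apply this whole argument to an arbitrary subsequence of $\{x_{n}\}$: it admits a further subsequence converging strongly to some $\mathcal{R}_{\eta}$-minimizing solution, which by uniqueness must be $x^{\dagger}$, and the subsequence principle then gives $x_{n}\to x^{\dagger}$ in $\ell_{2}$.

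I expect the main obstacle to be exactly the discrepancy between the running parameter $\eta_{n}$ and its limit $\eta$, since Lemma~\ref{lem2.4} supplies coercivity, weak lower semicontinuity, and the Radon--Riesz property only for a single fixed $\eta$. The two remedies above---the uniform coercivity bound obtained from $\eta_{n}\le\eta'<1$, and the asymptotically negligible correction term $(\eta-\eta_{n_{k}})\left\|x_{n_{k}}\right\|_{\ell_{2}}^{2}$---are precisely what transfer those fixed-$\eta$ properties to a sequence generated at the parameters $\eta_{n}$. A secondary point to settle is the boundary value $\eta=0$, which the hypothesis allows but Lemma~\ref{lem2.4} does not cover: there $\mathcal{R}_{0}=\left\|\cdot\right\|_{\ell_{1}}^{2}$, whose coercivity (again via $\left\|\cdot\right\|_{\ell_{1}}\ge\left\|\cdot\right\|_{\ell_{2}}$), weak lower semicontinuity, and the norm-convergence needed for the Radon--Riesz step all persist, so the same reasoning applies without change.
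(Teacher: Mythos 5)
Your proof is correct, and it follows exactly the route the paper itself prescribes: the paper omits the proof of Theorem \ref{the2.8} entirely, stating only that convergence follows from the classical Tikhonov arguments of \cite{EHN96, IJ14} combined with the properties in Lemma \ref{lem2.4}, which is precisely the skeleton you execute (minimality inequality, residual and regularizer bounds, coercivity, weak sequential closedness, weak lower semicontinuity, Radon--Riesz, and the subsequence principle for uniqueness). The two points where you go beyond what the paper records --- the identity $\mathcal{R}_{\eta_{n}}(x)=\mathcal{R}_{\eta}(x)+(\eta-\eta_{n})\left\|x\right\|_{\ell_{2}}^{2}$ with a uniform bound $\eta_{n}\leq\eta'<1$ to transfer Lemma \ref{lem2.4} from the fixed limit $\eta$ to the varying parameters $\eta_{n}$, and the verification that coercivity, weak lower semicontinuity, and the Radon--Riesz step survive at the endpoint $\eta=0$, which the theorem allows but Lemma \ref{lem2.4} (stated for $0<\eta<1$) does not cover --- are genuine gaps in the paper's by-reference argument, and your treatment of them is sound.
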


\subsection{The case $\eta=1$}\label{sec2.2}

\par We now consider the case where $\eta=1$. In this scenario, the functional $\mathcal{R}_{\eta}\left(x\right)$ remains weakly lower semi-continuous, as shown in \cite[Lemma 2.7, Remark 2.8]{LD25}. However, the properties of coercivity and the Radon-Riesz theorem cannot be extended to this case, as illustrated in Examples \ref{exa2.9} and \ref{exa2.11} below.

\begin{example}\label{exa2.9}
    \textbf{(Non-coercivity)} Let $x=te_{i}$ for some $i\in\mathbb{N}$, where $e_{i}=(\underbrace{0,\cdots,0,1}_{i},0,\cdots)$. In this case, we observe that $\left\|x\right\|_{\ell_{2}}\rightarrow\infty$ as $t\rightarrow\infty$. However, $\mathcal{R}_{1}\left(x\right)=0$ for all $t$. Therefore, $\mathcal{R}_{1}\left(x\right)$ is not coercive.
\end{example} 

\par It is essential to note that the standard proof for the well-posedness of Tikhonov regularization is not valid without the coercivity of the regularization term. To ensure the well-posedness of the problem \eqref{equ1.2} in the case where $\eta=1$, we present a result that imposes an additional assumption, specifically coercivity, on the nonlinear operator $F$; for examples of nonlinear (or linear) coercive operators, see \cite{AZ00, IK13}.

\begin{lemma}\label{lem2.10}
    Assume $F\left(x\right)$ is coercive with respect to $\left\|x\right\|_{\ell_{2}}$, i.e., $\left\|x\right\|_{\ell_{2}}\rightarrow\infty$ implies $\left\|F\left(x\right) \right\|_{Y}\rightarrow\infty$. Then the functional $\mathcal{J}_{\alpha,1}^{\delta}\left(x\right)$ is coercive.
\end{lemma}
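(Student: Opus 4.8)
The plan is to show directly that $\mathcal{J}_{\alpha,1}^{\delta}(x)\to\infty$ whenever $\|x\|_{\ell_{2}}\to\infty$, i.e.\ that any sequence $\{x_{n}\}$ with $\|x_{n}\|_{\ell_{2}}\to\infty$ forces $\mathcal{J}_{\alpha,1}^{\delta}(x_{n})\to\infty$. The decisive conceptual point is that, unlike the case $0<\eta<1$ treated in Lemma \ref{lem2.4}, the penalty $\mathcal{R}_{1}$ is no longer coercive (Example \ref{exa2.9}), so coercivity of the full functional cannot be inherited from the regularization term. Instead, it must be imported from the operator $F$ through the data-fidelity term, while the penalty is used only through its non-negativity.

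First I would record that $\mathcal{R}_{1}(x)\geq 0$ for every $x\in\ell_{2}$. Indeed, for $x\in\ell_{1}$ the elementary embedding inequality $\|x\|_{\ell_{2}}\leq\|x\|_{\ell_{1}}$ gives $\mathcal{R}_{1}(x)=\|x\|_{\ell_{1}}^{2}-\|x\|_{\ell_{2}}^{2}\geq 0$, while for $x\in\ell_{2}\setminus\ell_{1}$ one has $\|x\|_{\ell_{1}}=+\infty$, so $\mathcal{R}_{1}(x)=+\infty$ and the functional value is trivially $+\infty$. This is precisely the non-negativity of $\mathcal{R}_{\eta}$ announced just before Section \ref{sec2.1}. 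Since $\alpha>\beta>0$ implies $\alpha>0$, it follows that
\[
    \mathcal{J}_{\alpha,1}^{\delta}(x)=\frac{1}{q}\left\|F(x)-y^{\delta}\right\|_{Y}^{q}+\alpha\mathcal{R}_{1}(x)\geq\frac{1}{q}\left\|F(x)-y^{\delta}\right\|_{Y}^{q},
\]
so it suffices to show that the data-fidelity term alone diverges.

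Next I would invoke the coercivity hypothesis on $F$ together with the reverse triangle inequality. Given $\|x\|_{\ell_{2}}\to\infty$, the assumption yields $\|F(x)\|_{Y}\to\infty$; combining this with $\|F(x)-y^{\delta}\|_{Y}\geq\|F(x)\|_{Y}-\|y^{\delta}\|_{Y}$ and the fact that $\|y^{\delta}\|_{Y}$ is a fixed finite quantity, we obtain $\|F(x)-y^{\delta}\|_{Y}\to\infty$. Because $q\geq 1$, raising to the $q$-th power and dividing by $q$ preserves the divergence, so $\tfrac{1}{q}\|F(x)-y^{\delta}\|_{Y}^{q}\to\infty$. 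Chaining this with the lower bound from the previous step gives $\mathcal{J}_{\alpha,1}^{\delta}(x)\to\infty$, which is exactly the asserted coercivity.

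There is no serious analytic obstacle here; once the right decomposition is isolated the argument is very short. The only point demanding attention is conceptual rather than technical: one must recognise that coercivity of the objective in the degenerate case $\eta=1$ cannot come from the penalty and has to be supplied entirely by the operator, with $\mathcal{R}_{1}\geq 0$ playing the purely passive role of not spoiling the lower bound. A minor bookkeeping subtlety is the treatment of $x\in\ell_{2}\setminus\ell_{1}$, where the penalty is infinite; handling this case separately (or adopting the convention that $\mathcal{R}_{1}$ may take the value $+\infty$) keeps the estimate clean.
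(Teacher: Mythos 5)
Your proposal is correct and follows essentially the same route as the paper's proof: drop the penalty term via the non-negativity $\mathcal{R}_{1}(x)\geq 0$ (from $\|x\|_{\ell_{2}}\leq\|x\|_{\ell_{1}}$), then apply the reverse triangle inequality and the coercivity of $F$ to make the data-fidelity term diverge. Your version is merely more explicit, e.g.\ in treating $x\in\ell_{2}\setminus\ell_{1}$ separately, but the argument is the same.
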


\begin{proof}
    By the definition of $\mathcal{J}_{\alpha,\eta}^{\delta}\left(x\right)$, we have
    \begin{align*}
        \mathcal{J}_{\alpha,1}^{\delta}\left(x\right)
        & =\frac{1}{q}\left\|F\left(x\right)- y^{\delta}\right\|_{Y}^{q}+\alpha\left\|x\right\|_{\ell_{1}}^{2}- \alpha\left\|x\right\|_{\ell_{2}}^{2} \\
        & \geq \frac{1}{q}\left|\left\|F\left(x\right)\right\|_{Y}-\left\|y^{\delta}\right\|_{Y} \right|^{q}.
    \end{align*}
    Since $F$ is coercive, it follows that $\mathcal{J}_{\alpha,1}^{\delta}\rightarrow \infty$ as $\left\|x\right\|_{\ell_{2}}\rightarrow\infty$.
\end{proof}

\par Finally, we provide an example to illustrate that $x_{n}$ does not necessarily converge strongly to $x$ even when $x_{n}\rightharpoonup x$ in $\ell_{2}$ and $\mathcal{R}_{1}\left(x_{n}\right)\rightarrow\mathcal{R}_{1}\left(x\right)$. This example shows that $\mathcal{R}_{1}\left(x\right)$ fails to satisfy the Radon-Riesz property.

\begin{example}\label{exa2.11}
    \textbf{(Non-Radon-Riesz property)} Let $x_{n}=(\underbrace{0,\cdots,0,1}_{n},0,\cdots)$ and $x=0$. It follows that $x_{n}\rightharpoonup x$ in $\ell_{2}$. We have
    \begin{equation*}
        \mathcal{R}_{1}\left(x_{n}\right)=\left\|x_{n}\right\|_{\ell_{1}}^{2}- \left\|x_{n}\right\|_{\ell_{2}}^{2}=0 \quad {\rm and} \quad \mathcal{R}_{1}\left(x\right)=0.
    \end{equation*}
    Hence, $\mathcal{R}_{1}\left(x_{n}\right)\rightarrow \mathcal{R}_{1}\left(x\right)$. However, since $\left\|x_{n}-x\right\|_{\ell_{2}}=1$,  it follows that $x_{n}$ does not converge strongly to $x$.
\end{example}

\par Given that $\mathcal{R}_{1}\left(x\right)$ fails to satisfy the Radon-Riesz property, we cannot guarantee the stability and convergence properties similar to those stated in Theorems \ref{the2.7} and \ref{the2.8}. Nevertheless, by introducing the additional assumption of the coercivity of $F$ and utilizing Lemma \ref{lem2.10}, we can establish stability and convergence properties akin to those detailed in Theorems \ref{the2.7} and \ref{the2.8}.

\subsection{Sparsity}\label{sec2.3}

\par Next, we will discuss the sparsity of the regularized solution. When $q=2$, under a restriction on the nonlinearity of $F$, it can be shown that every minimizer of $\mathcal{J}_{\alpha,\eta}^{\delta}\left(x\right)$ is sparse whenever $0<\eta<1$ or $\eta=1$.

\begin{assumption}\label{ass2.12}
    Assume that $F:\ell_{2}\rightarrow Y$ is Fr\'{e}chet-differentiable, and there exists a constant $\gamma>0$ such that  
    \begin{equation}\label{equ2.16}
        \left\|F'\left(y\right)-F'\left(x\right)\right\|_{L(\ell_{2}, Y)}\leq \gamma\left\|y-x\right\|_{\ell_{2}}
    \end{equation}
    for any $y\in B_{\delta}\left(x\right)$, where $x$ is a minimizer of $\mathcal{J}_{\alpha,\eta}^{\delta}\left(x\right)$ and $B_{\delta}\left(x\right):= \left\{y\mid \left\|y-x\right\|_{\ell_{2}}\leq \delta\right\}$, with $\delta\geq\left\|x \right\|_{\infty}$.
\end{assumption}

\begin{remark}\label{rem2.13}
    According to {\rm \cite[p.14]{JM12}}, \eqref{equ2.16} implies
    \begin{equation}\label{equ2.17}
        \left\|F\left(y\right)-F\left(x\right)-F'\left(x\right)\left(y-x\right) \right\|_{Y}\leq\frac{\gamma}{2}\left\|y-x\right\|_{\ell_{2}}^{2}
    \end{equation}
    for any $y\in B_{\delta}\left(x\right)$, where $\delta\geq\left\|x \right\|_{\infty}$.
\end{remark}

\begin{theorem}\label{the2.14}
    \textbf{(Sparsity)} Let $x$ be a minimizer of $\mathcal{J}_{\alpha,\eta}^{\delta} \left(x\right)$ for $0< \eta\leq 1$ and $q=2$. If Assumption \ref{ass2.12} holds, then $x$ is sparse.
\end{theorem}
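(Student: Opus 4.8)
The plan is to extract a coordinatewise first-order optimality condition for the minimizer $x$ and play it against the fact that the gradient of the data-misfit term lies in $\ell_2$. First I would dispose of the trivial case $x=0$ (which is sparse) and record that, since $\mathcal{J}_{\alpha,\eta}^{\delta}(x)<\infty$, the minimizer automatically satisfies $x\in\ell_1$, so that $\|x\|_{\ell_1}$ and $\mathcal{R}_\eta(x)$ are finite. Writing $g:=F'(x)^{*}\!\left(F(x)-y^{\delta}\right)$ for the gradient of $\tfrac12\|F(\cdot)-y^\delta\|_Y^2$ at $x$, the decisive structural observation is that $g\in\ell_2$, because $F'(x):\ell_2\to Y$ is bounded and hence so is its adjoint $F'(x)^{*}:Y\to\ell_2$. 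In particular $g_i\to0$ as $i\to\infty$.

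Next I would obtain the optimality relation from the one-parameter perturbations $x+te_i$. The Fréchet differentiability of $F$ yields the expansion $\tfrac12\|F(x+te_i)-y^\delta\|_Y^2=\tfrac12\|F(x)-y^\delta\|_Y^2+t\,g_i+o(t)$, with the quadratic remainder of Remark \ref{rem2.13} making the $o(t)$ term an explicit $O(t^2)$, while $\|x+te_i\|_{\ell_1}^2$ and $\|x+te_i\|_{\ell_2}^2$ are computed exactly. Substituting into $\mathcal{J}_{\alpha,\eta}^{\delta}(x+te_i)\ge\mathcal{J}_{\alpha,\eta}^{\delta}(x)$, dividing by $t$ and letting $t\to0^{\pm}$ gives, for every $i\in\mathrm{supp}(x)$ (where $|x_i+t|$ is smooth near $t=0$, so the two-sided derivative must vanish),
\[
g_i=2\alpha\eta x_i-2\alpha\|x\|_{\ell_1}\,\mathrm{sign}(x_i),
\]
and, for $i\notin\mathrm{supp}(x)$, the one-sided bound $|g_i|\le 2\alpha\|x\|_{\ell_1}$.

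The final step is the contradiction. For $i\in\mathrm{supp}(x)$ the displayed identity gives, for either sign of $x_i$, the single expression $|g_i|=2\alpha\bigl(\|x\|_{\ell_1}-\eta|x_i|\bigr)$, and since $\eta\le1$ and $\|x\|_{\ell_1}\ge|x_i|$, this quantity is nonnegative. Suppose $\mathrm{supp}(x)$ were infinite. Then along the support $|x_i|\to0$ (because $x\in\ell_2$ forces the nonzero coordinates to a null sequence), so $|g_i|\to2\alpha\|x\|_{\ell_1}>0$ along the support, which contradicts $g_i\to0$. Hence $\mathrm{supp}(x)$ is finite and $x$ is sparse; note this argument is uniform in $0<\eta\le1$, covering $\eta=1$ as well.

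I expect the main obstacle to be the rigorous justification of the optimality/perturbation step rather than the concluding limit. Because $\|\cdot\|_{\ell_1}^2$ is finite only on $\ell_1\subsetneq\ell_2$ and is nonsmooth exactly at coordinates that vanish, one must work directly with difference quotients of $\mathcal{J}_{\alpha,\eta}^{\delta}$ along each $e_i$ instead of invoking an abstract subdifferential calculus in $\ell_2$; Assumption \ref{ass2.12} (through Remark \ref{rem2.13}) is what keeps that expansion legitimate, so that the genuinely decisive input remains the elementary fact $g\in\ell_2$. A secondary check, easily dispatched, is that the sign bookkeeping is symmetric in $x_i>0$ and $x_i<0$, both collapsing to $|g_i|=2\alpha(\|x\|_{\ell_1}-\eta|x_i|)$.
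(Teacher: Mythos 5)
Your proof is correct, and it shares the decisive input of the paper's own argument---namely that $g:=F'(x)^{*}\left(F(x)-y^{\delta}\right)\in\ell_{2}$, hence $g_{i}\to 0$---but the mechanism is genuinely different. The paper never differentiates the functional: it tests minimality against the \emph{finite} deletion perturbation $\overline{x}:=x-x_{i}e_{i}$, expands the misfit difference via the quadratic Taylor remainder \eqref{equ2.17} of Remark \ref{rem2.13}, and packages the resulting inequality \eqref{equ2.24} into quantities $K_{i}$ satisfying $K_{i}x_{i}\geq\left|x_{i}\right|$ with $K_{i}\to 0$, so that only finitely many coordinates can survive. You instead derive an exact first-order stationarity identity along each coordinate direction in the support, where $t\mapsto\left|x_{i}+t\right|$ is smooth, obtaining $\left|g_{i}\right|=2\alpha\left(\left\|x\right\|_{\ell_{1}}-\eta\left|x_{i}\right|\right)$, and contradict $g_{i}\to 0$ along an infinite support, where $\left|x_{i}\right|\to 0$ forces $\left|g_{i}\right|\to 2\alpha\left\|x\right\|_{\ell_{1}}>0$. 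Your route buys two things: the closed-form identity for $g_{i}$ is sharper than the paper's inequality \eqref{equ2.24}, and it actually needs \emph{less} than Assumption \ref{ass2.12}---mere Fr\'{e}chet differentiability of $F$ at $x$ already gives an $o(t)$ remainder, which suffices for the two-sided difference-quotient argument, so your remark that Assumption \ref{ass2.12} is ``what keeps the expansion legitimate'' slightly overstates its role (it merely upgrades $o(t)$ to $O(t^{2})$). By contrast, the paper's perturbation has the fixed size $\left|x_{i}\right|$, so the Lipschitz bound \eqref{equ2.16} is genuinely needed there to control the remainder at finite distance. What the paper's route buys is that it sidesteps all smoothness bookkeeping for the nonsmooth $\ell_{1}$ term: no case distinction between support and off-support coordinates is required (your off-support bound $\left|g_{i}\right|\leq 2\alpha\left\|x\right\|_{\ell_{1}}$ is indeed never used). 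Both arguments cover the whole range $0<\eta\leq 1$ uniformly, as you note.
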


\begin{proof}
    For $i\in\mathbb{N}$, consider $\overline{x}:=x-x_{i}e_{i}$, where $x_{i}$ is the $i$th component of $x$. It is clear that $\overline{x}\in B_{\delta}\left(x\right)$. By the definition of $x$,
    \begin{equation}\label{equ2.18}
        \frac{1}{2}\left\|F\left(x\right)-y^{\delta}\right\|_{Y}^{2}+ \alpha\mathcal{R}_{\eta}\left(x\right) \leq \frac{1}{2}\left\|F\left(\overline{x}\right)-y^{\delta}\right\|_{Y}^{2}+ \alpha\mathcal{R}_{\eta}\left(\overline{x}\right).
    \end{equation}
    If $x=0$, then $x$ is sparse. Suppose $x\neq 0$, by \eqref{equ2.18}, we see that
    \begin{align}\label{equ2.19}
        \left|x_{i}\right|\left(\left\|x\right\|_{\ell_{1}}+ \left\|\overline{x}\right\|_{\ell_{1}}\right)-\eta\left|x_{i}\right|^{2}
        & = \mathcal{R}_{\eta}\left(x\right)-\mathcal{R}_{\eta}\left(\overline{x}\right) \nonumber\\
        & \leq \frac{1}{2\alpha}\left\|F\left(\overline{x}\right)-y^{\delta}\right\|_{Y}^{2} -\frac{1}{2\alpha}\left\|F\left(x\right)-y^{\delta}\right\|_{Y}^{2} \nonumber\\
        & = \frac{1}{2\alpha}\left\|F\left(\overline{x}\right)-F\left(x\right)\right\|_{Y}^{2} +\frac{1}{\alpha}\left<F\left(\overline{x}\right)-F\left(x\right), F\left(x\right)-y^{\delta}\right>. 
    \end{align}
    From Remark \ref{rem2.13}, we find that
    \begin{equation}\label{equ2.20}
        F\left(\overline{x}\right)=F\left(x\right)+F'\left(x\right)\left(\overline{x}-x\right) +r_{\alpha}^{\delta}
    \end{equation}
    with
    \begin{equation}\label{equ2.21}
        \left\|r_{\alpha}^{\delta}\right\|_{Y}\leq\frac{\gamma}{2}\left\|\overline{x} -x\right\|_{\ell_{2}}^{2}.
    \end{equation}
    Combining \eqref{equ2.20} and \eqref{equ2.21} implies that 
    \begin{align}\label{equ2.22}
        \left\|F\left(\overline{x}\right)-F\left(x\right)\right\|_{Y}^{2}
        & =  \left\|F'\left(x\right)\left(\overline{x}-x\right)\right\|_{Y}^{2} +\left\|r_{\alpha}^{\delta}\right\|_{Y}^{2}+ 2\left<F'\left(x\right)\left(\overline{x}-x\right), r_{\alpha}^{\delta}\right>   \nonumber\\
        & \leq \left\|F'\left(x\right)\right\|_{L\left(\ell_{2}, Y\right)}^{2}\left\|\overline{x}-x\right\|_{\ell_{2}}^{2}+ \frac{\gamma^{2}}{4}\left\|\overline{x}-x\right\|_{\ell_{2}}^{4}+ \gamma\left\|F'\left(x\right)\right\|_{L\left(\ell_{2}, Y\right)}^{2}\left\|\overline{x}-x\right\|_{\ell_{2}}^{3} \nonumber\\
        & = \left|x_{i}\right|^{2}\left\|F'\left(x\right)\right\|_{L\left(\ell_{2}, Y\right)}^{2}+\frac{\gamma^{2}}{4}\left|x_{i}\right|^{4}+ \gamma\left|x_{i}\right|^{3}\left\|F'\left(x\right)\right\|_{L\left(\ell_{2}, Y\right)}^{2}.
    \end{align}
    Moreover,
    \begin{align}\label{equ2.23}
        \left<F\left(\overline{x}\right)-F\left(x\right), F\left(x\right)-y^{\delta}\right>
        & = \left<F'\left(x\right)\left(\overline{x}-x\right) +r_{\alpha}^{\delta}, F\left(x\right)-y^{\delta}\right> \nonumber\\
        & \leq -x_{i}\left<e_{i}, F'\left(x\right)^{*}\left(F\left(x\right) -y^{\delta}\right)\right>+\frac{\gamma}{2}\left|x_{i}\right|^{2} \left\|F\left(x\right)-y^{\delta}\right\|_{Y}^{2}.
    \end{align}
    A combination of \eqref{equ2.19}, \eqref{equ2.22} and \eqref{equ2.23} yields
    \begin{align}\label{equ2.24}
        \left|x_{i}\right|\left(\left\|x\right\|_{\ell_{1}}+ \left\|\overline{x}\right\|_{\ell_{1}}\right)-\eta\left|x_{i}\right|^{2}
        & \leq \frac{1}{2\alpha}\left|x_{i}\right|^{2}\left\|F'\left(x\right) \right\|_{L\left(\ell_{2},Y\right)}^{2}+\frac{\gamma^{2}}{8\alpha} \left|x_{i}\right|^{4}+\frac{\gamma}{2\alpha}\left|x_{i}\right|^{3} \left\|F'\left(x\right)\right\|_{L\left(\ell_{2}, Y\right)}^{2}  \nonumber\\
        &  \quad -\frac{1}{\alpha}x_{i}\left<e_{i},F'\left(x\right)^{*} \left(F\left(x\right)-y^{\delta}\right)\right>+ \frac{\gamma}{2\alpha} \left|x_{i}\right|^{2}\left\|F\left(x\right)-y^{\delta}\right\|_{Y}^{2}
    \end{align}
    for every $i\in\mathbb{N}$. Define $\left\|x\right\|_{\ell_{0}}:=\sum_{i\in\mathbb{N}}{\rm sgn}\left(\left|x_{i}\right| \right)$, where ${\rm sgn}$ is the sign function. If $\left\|x\right\|_{\ell_{0}}=1$, then $x$ is considered sparse. Otherwise, if $\left\|x\right\|_{\ell_{0}}\geq 2$, we have $\left|x_{i}\right|< \left\|x\right\|_{\ell_{1}}+\left\|\overline{x}\right\|_{\ell_{1}}$. Consequently, there exists a constant $c>0$ such that
    \begin{equation}\label{equ2.25}
        \frac{c+\eta\left|x_{i}\right|}{\left\|x\right\|_{\ell_{1}}+ \left\|\overline{x}\right\|_{\ell_{1}}}\leq 1, \quad i.e., \quad \frac{c}{\left\|x\right\|_{\ell_{1}}+ \left\|\overline{x}\right\|_{\ell_{1}}} \leq 1-\frac{\eta\left|x_{i}\right|}{\left\|x\right\|_{\ell_{1}}+ \left\|\overline{x}\right\|_{\ell_{1}}}.
    \end{equation}
    Multiplying \eqref{equ2.25} by $\left|x_{i}\right|$, we obtain
    \begin{equation}\label{equ2.26}
        c\frac{\left|x_{i}\right|}{\left\|x\right\|_{\ell_{1}}+ \left\|\overline{x}\right\|_{\ell_{1}}} \leq \left|x_{i}\right|-\eta\frac{\left|x_{i}\right|^{2}}{\left\|x\right\|_{\ell_{1}}+ \left\|\overline{x}\right\|_{\ell_{1}}}.
    \end{equation}
    Let us denote
    \begin{align*}
        K_{i}
        & := \frac{\left(\left\|x\right\|_{\ell_{1}}+ \left\|\overline{x}\right\|_{\ell_{1}}\right)\left(\frac{1}{2}x_{i} \left\|F'\left(x\right)\right\|_{L\left(\ell_{2},Y\right)}^{2}+\frac{\gamma^{2}}{8} x_{i}^{3}+\frac{\gamma}{2}x_{i}\left|x_{i}\right| \left\|F'\left(x\right)\right\|_{L\left(\ell_{2},Y\right)}^{2}\right)}{c\alpha} \\
        & \qquad + \frac{\left(\left\|x\right\|_{\ell_{1}}+ \left\|\overline{x}\right\|_{\ell_{1}}\right)\left(-x_{i}\left<e_{i}, F'\left(x\right)^{*} \left(F\left(x\right)-y^{\delta}\right)\right>+ \frac{\gamma}{2}x_{i}\left\|F\left(x\right)- y^{\delta}\right\|_{Y}^{2}\right)}{c\alpha}.
    \end{align*}
    A combination of \eqref{equ2.24} and \eqref{equ2.26} implies that
    \begin{equation*}
        K_{i}x_{i}\geq\left|x_{i}\right|, \quad i\in\mathbb{N}.
    \end{equation*}
    Since $x$ is a minimizer of $\mathcal{J}_{\alpha,\eta}^{\delta}\left(x\right)$, $\left\|F\left(x\right)-y^{\delta}\right\|_{Y}$ is finite. Additionally, since $F$ is Fr\'{e}chet-differentiable, $\left\|F'\left(x\right)\right\|_{L\left(\ell_{2}, Y\right)}$ is also finite. Notably, since $x,\overline{x}\in\ell_{2}$, $F'\left(x\right)^{*} \left(F\left(x\right)-y^{\delta}\right)\in\ell_{2}$, it follows that $K_{i}\rightarrow 0$ as $i\rightarrow\infty$. This implies that $\Lambda:=\left\{i\in\mathbb{N}\mid\left| K_{i}\right|\geq 1\right\}$ is finite. It is evident that $x_{i}=0$ whenever $i\notin\Lambda$, thereby proving the proposition.
\end{proof}

\par Note that the above result holds only for the case when $q=2$. It is unclear whether each minimizer of $\mathcal{J}_{\alpha,\eta}^{\delta}\left(x\right)$ is sparse when $q\neq 2$.

\section{Convergence rate of the regularized solutions}\label{sec3}

\par We analyze the convergence rate for the scenario where $0<\eta<1$ in this section. To this end, we impose specific smoothness conditions on the solution $x^{\dagger}$. Additionally, we enact two commonly accepted assumptions regarding the nonlinearity of $F$, and subsequently derive two corresponding inequalities. From these inequalities, we ascertain convergence rates of $\mathcal{O}\left(\delta^{1/2}\right)$ and $\mathcal{O}\left(\delta\right)$ in the $\ell_{2}$-norm.

\subsection{Convergence rate $\mathcal{O}\left(\delta^{1/2}\right)$}\label{sec3.1}

\begin{assumption}\label{ass3.1}
    Let $x^{\dagger}\neq 0$ be an $\mathcal{R}_{\eta}$-minimizing solution to the problem \eqref{equ1.1}, which exhibits sparsity. Assume that \\
    (1) $F$ is continuously Fr\'{e}chet-differentiable. For each $i\in I\left(x^{\dagger}\right)$, there exists $\omega_{i}\in Y$ such that
    \begin{equation}\label{equ3.1}
        e_{i} = F'\left(x^{\dagger}\right)^{*}\omega_{i},
    \end{equation}
    where $I\left(x^{\dagger}\right)$ is defined in \eqref{equ2.2}. \\
    (2) There exists constants $\gamma>0$, $\rho>0$, and $\mathcal{R}_{\eta}\left(x^{\dagger}\right)<\rho$ such that
    \begin{equation}\label{equ3.2}
        \left\|F'\left(x\right)-F'\left(x^{\dagger}\right)\right\|_{L\left(\ell_{2}, Y\right)}\leq \gamma\left\|x-x^{\dagger}\right\|_{\ell_{2}}
    \end{equation}
    for all $x\in\ell_{2}$ that satisfy $\mathcal{R}_{\eta}\left(x\right)\leq \rho$.
\end{assumption}

\par Assumption \ref{ass3.1} (1) and similar conditions have been established in previous works \cite{BL09, G09}. Specifically, Assumption \ref{ass3.1} (1) serves as a source condition, imposing a smoothness requirement on the solution $x^{\dagger}$. In contrast, Assumption \ref{ass3.1} (2) imposes a restriction on $F$, which has significant implications. It not only places a nonlinearity condition on $F$ but also plays a crucial role in estimating the term $\left<F'\left(x^{\dagger}\left(x-x^{\dagger}\right)\right),\omega_{i}\right>$, where $\omega_{i}$ are defined as in \eqref{equ3.1}. Many researchers have noted that combining nonlinearity restrictions on $F$ with source conditions is a powerful framework for establishing convergence rates in regularization contexts \cite{GHS08, HKPS07, SKHK12}. One commonly utilized restriction is \eqref{equ3.2}, which asserts that $F'$ is Lipschitz continuous \cite{EHN96, JM12}

\begin{remark}\label{rem3.2}
    Note that for all $x\in\ell_{2}$ satisfying $\mathcal{R}_{\eta}\left(x\right)\leq \rho$, \eqref{equ3.2} implies
    \begin{equation}\label{equ3.3}
        \left\|F\left(x\right)-F\left(x^{\dagger}\right)-F'\left(x^{\dagger}\right) \left(x-x^{\dagger}\right)\right\|_{Y}\leq \frac{\gamma}{2}\left\|x-x^{\dagger} \right\|_{\ell_{2}}^{2}
    \end{equation}
    according to a Taylor approximation of $F$. By applying the triangle inequality, we deduce
    \begin{equation}\label{equ3.4}
        \left\|F'\left(x^{\dagger}\right)\left(x-x^{\dagger}\right)\right\|_{Y}\leq \frac{\gamma}{2}\left\|x-x^{\dagger}\right\|_{\ell_{2}}^{2}+ \left\|F\left(x\right)-F\left(x^{\dagger}\right)\right\|_{Y}
    \end{equation}
    for all $x\in\ell_{2}$ satisfying $\mathcal{R}_{\eta}\left(x\right)\leq\rho$. This provides an upper bound for the term $\left<F'\left(x^{\dagger}\right) \left(x-x^{\dagger}\right),\omega_{i}\right>$. 
\end{remark}

\begin{lemma}\label{lem3.3}
    If Assumption \eqref{equ3.1} holds, it follows that there exists $x^{\dagger}= F'\left(x^{\dagger}\right)^{*}\omega^{\dagger}$. 
\end{lemma}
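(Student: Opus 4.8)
The plan is to exploit the sparsity of $x^{\dagger}$ to reduce the assertion to a finite linear combination of the pointwise source representations supplied by Assumption \ref{ass3.1}(1). Since $x^{\dagger}$ is sparse, the index set $I(x^{\dagger})$ defined in \eqref{equ2.2} is finite, and therefore $x^{\dagger}$ admits the finite expansion $x^{\dagger} = \sum_{i \in I(x^{\dagger})} x_i^{\dagger} e_i$, with each $x_i^{\dagger}$ the $i$th component of $x^{\dagger}$.

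First I would invoke the source condition \eqref{equ3.1}: for each $i \in I(x^{\dagger})$ there is an $\omega_i \in Y$ with $e_i = F'(x^{\dagger})^* \omega_i$. Substituting these representations into the expansion of $x^{\dagger}$ and using that $F'(x^{\dagger})^*$ is a bounded linear operator from $Y$ to $\ell_2$ (its boundedness coming from the continuous Fr\'echet differentiability in Assumption \ref{ass3.1}(1)), I would pull the operator outside the finite sum to obtain
\[
x^{\dagger} = \sum_{i \in I(x^{\dagger})} x_i^{\dagger}\, F'(x^{\dagger})^* \omega_i = F'(x^{\dagger})^* \Big( \sum_{i \in I(x^{\dagger})} x_i^{\dagger} \omega_i \Big).
\]

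Then I would define $\omega^{\dagger} := \sum_{i \in I(x^{\dagger})} x_i^{\dagger} \omega_i$ and note that $\omega^{\dagger} \in Y$, since this is a finite linear combination of elements of the Hilbert space $Y$. This yields precisely $x^{\dagger} = F'(x^{\dagger})^* \omega^{\dagger}$, which is the claim.

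The only point where the hypotheses are genuinely used — modest as the obstacle is — is the finiteness of $I(x^{\dagger})$ coming from sparsity: it is exactly this that turns the componentwise source condition into a single global source representation for $x^{\dagger}$. Were $x^{\dagger}$ not sparse, one would instead confront an infinite series $\sum_{i} x_i^{\dagger} \omega_i$ whose convergence in $Y$ is not guaranteed by Assumption \ref{ass3.1} alone, so I would be careful to emphasize that the finiteness of the support is what makes the interchange of $F'(x^{\dagger})^*$ with the summation legitimate.
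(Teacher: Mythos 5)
Your proof is correct and is exactly the paper's argument: the paper's entire proof consists of choosing $\omega^{\dagger}=\sum_{i\in I\left(x^{\dagger}\right)}x_{i}^{\dagger}\omega_{i}$, which is precisely your construction. You merely spell out the verification (finiteness of $I\left(x^{\dagger}\right)$ from sparsity, linearity of $F'\left(x^{\dagger}\right)^{*}$ over the finite sum) that the paper leaves to the reader.
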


\par This assertion can be readily verified by choosing $\omega^{\dagger}=\sum_{i\in I\left(x^{\dagger}\right)}x_{i}^{\dagger}\omega_{i}$.

\par Subsequently, we derive an inequality necessary for the proof of the convergence rate. According to \ref{lem2.4} (1), for any $M>0$, there exists $M_{1}>0$ such that $\mathcal{R}_{\eta}\left(x\right)\leq M$ for $x\in\ell_{2}$ implies $\left\|x\right\|_{\ell_{1}}\leq M_{1}$. We further denote
\begin{align}
    c_{1} &= \left(M_{1}+2\left\|x^{\dagger}\right\|_{\ell_{1}}\right) \left|I\left(x^{\dagger}\right)\right|\max_{i\in I\left(x^{\dagger}\right)} \left\|\omega_{i}\right\|_{Y}+2\left\|\omega^{\dagger}\right\|_{Y},  \label{equ3.5} \\
    c_{2} &= 2\left\|\omega^{\dagger}\right\|_{Y}.  \label{equ3.6}
\end{align}

\begin{lemma}\label{lem3.4}
    Let $M>0$ be given, and denote by $c_{1}$ and $c_{2}$ the constants defined in \eqref{equ3.5}-\eqref{equ3.6}. Under Assumption \ref{ass3.1}, if $\Gamma:=\frac{\gamma \left(c_{1}-c_{2}\eta\right)}{2\left(1-\eta\right)}<1$, then we have
    \begin{equation*}
        \left\|x-x^{\dagger}\right\|_{\ell_{2}}^{2}\leq \frac{1}{1-\Gamma} \left[\frac{1}{1-\eta}\left(\mathcal{R}_{\eta}\left(x\right)- \mathcal{R}_{\eta}\left(x^{\dagger}\right)\right)-\frac{c_{1}-c_{2}\eta} {1-\eta}\left\|F\left(x\right)- F\left(x^{\dagger}\right)\right\|_{Y}\right]
    \end{equation*}
    for any $x\in\ell_{2}$ satisfying $\mathcal{R}_{\eta}\left(x\right)\leq M$ and $\mathcal{R}_{\eta}\left(x\right)\leq \rho$.
\end{lemma}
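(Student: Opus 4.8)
The plan is to reduce the claim to a single coercive lower bound for the squared $\ell_{1}$ term and then solve the resulting scalar inequality for $\|x-x^{\dagger}\|_{\ell_{2}}^{2}$. Writing $h=x-x^{\dagger}$ and expanding the penalty directly, $\mathcal{R}_{\eta}(x)-\mathcal{R}_{\eta}(x^{\dagger})=(\|x\|_{\ell_{1}}^{2}-\|x^{\dagger}\|_{\ell_{1}}^{2})-\eta\|h\|_{\ell_{2}}^{2}-2\eta\langle x^{\dagger},h\rangle$, one sees that the stated estimate is equivalent, after clearing the denominators $(1-\Gamma)(1-\eta)=(1-\eta)-\tfrac{\gamma(c_{1}-c_{2}\eta)}{2}$, to the inequality $\|x\|_{\ell_{1}}^{2}-\|x^{\dagger}\|_{\ell_{1}}^{2}\ge\bigl(1-\tfrac{\gamma(c_{1}-c_{2}\eta)}{2}\bigr)\|h\|_{\ell_{2}}^{2}+2\eta\langle x^{\dagger},h\rangle-(c_{1}-c_{2}\eta)\|F(x)-F(x^{\dagger})\|_{Y}$. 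Everything therefore hinges on bounding the squared $\ell_{1}$ difference from below by a coercive $\|h\|_{\ell_{2}}^{2}$ term plus controllable first-order terms.

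The cross term is the easier of the two. By Lemma \ref{lem3.3} we have $x^{\dagger}=F'(x^{\dagger})^{*}\omega^{\dagger}$, so $\langle x^{\dagger},h\rangle=\langle\omega^{\dagger},F'(x^{\dagger})h\rangle$, and since $\mathcal{R}_{\eta}(x)\le\rho$ the Taylor estimate \eqref{equ3.4} yields $\|F'(x^{\dagger})h\|_{Y}\le\tfrac{\gamma}{2}\|h\|_{\ell_{2}}^{2}+\|F(x)-F(x^{\dagger})\|_{Y}$. Combining with Cauchy--Schwarz bounds $2\eta|\langle x^{\dagger},h\rangle|$ by $\eta c_{2}\bigl(\tfrac{\gamma}{2}\|h\|_{\ell_{2}}^{2}+\|F(x)-F(x^{\dagger})\|_{Y}\bigr)$, which is how $c_{2}=2\|\omega^{\dagger}\|_{Y}$ enters.

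For the squared $\ell_{1}$ term I would exploit the sparsity of $x^{\dagger}$. Choosing the sign subgradient $\xi=\sum_{i\in I(x^{\dagger})}\mathrm{sgn}(x_{i}^{\dagger})e_{i}\in\partial\|\cdot\|_{\ell_{1}}(x^{\dagger})$ and using convexity of $t\mapsto t^{2}$ gives a lower bound for $\|x\|_{\ell_{1}}^{2}-\|x^{\dagger}\|_{\ell_{1}}^{2}$ in terms of the prefactor $\|x\|_{\ell_{1}}+\|x^{\dagger}\|_{\ell_{1}}$ and $\langle\xi,h\rangle$; by Assumption \ref{ass3.1}(1), $\xi=F'(x^{\dagger})^{*}\omega_{\xi}$ with $\omega_{\xi}=\sum_{i\in I(x^{\dagger})}\mathrm{sgn}(x_{i}^{\dagger})\omega_{i}$, so $\langle\xi,h\rangle=\langle\omega_{\xi},F'(x^{\dagger})h\rangle$ is again estimated via \eqref{equ3.4}. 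Here the finiteness of $I(x^{\dagger})$ gives $\|\omega_{\xi}\|_{Y}\le|I(x^{\dagger})|\max_{i}\|\omega_{i}\|_{Y}$, and the constraint $\mathcal{R}_{\eta}(x)\le M$ bounds $\|x\|_{\ell_{1}}\le M_{1}$, so that the prefactor is uniformly controlled; together these produce the constant $c_{1}$ of \eqref{equ3.5}. All the quadratic remainders $\tfrac{\gamma}{2}\|h\|_{\ell_{2}}^{2}$ then assemble into the single coefficient $\tfrac{\gamma(c_{1}-c_{2}\eta)}{2}=\Gamma(1-\eta)$.

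Finally, since $\Gamma<1$ we have $(1-\Gamma)(1-\eta)>0$, so the assembled $\|h\|_{\ell_{2}}^{2}$-remainder may be moved to the left-hand side and the inequality divided by $(1-\Gamma)(1-\eta)$ to isolate $\|x-x^{\dagger}\|_{\ell_{2}}^{2}$, giving the claim. I expect the main obstacle to lie precisely in the squared, non-differentiable $\ell_{1}$ term: the subgradient inequality only supplies a first-order (linear in $h$) lower bound, whereas the reduction above demands a genuinely coercive $\|h\|_{\ell_{2}}^{2}$ contribution from $\|x\|_{\ell_{1}}^{2}-\|x^{\dagger}\|_{\ell_{1}}^{2}$. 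Verifying that this coercivity is recovered --- using the sparsity of $x^{\dagger}$, the uniform bound $M_{1}$, and the smallness condition $\Gamma<1$ --- is the crux of the argument.
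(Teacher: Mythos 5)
There is a genuine gap, and you have in fact located it yourself but left it unresolved: the coercive term never materializes in your argument. Your reduction requires
$\|x\|_{\ell_{1}}^{2}-\|x^{\dagger}\|_{\ell_{1}}^{2}\ge\bigl(1-\tfrac{\gamma(c_{1}-c_{2}\eta)}{2}\bigr)\|h\|_{\ell_{2}}^{2}+2\eta\langle x^{\dagger},h\rangle-(c_{1}-c_{2}\eta)\|F(x)-F(x^{\dagger})\|_{Y}$,
i.e.\ a lower bound carrying a quadratic term with coefficient $1-\Gamma(1-\eta)>0$. But the mechanism you propose --- the subgradient inequality $\|x\|_{\ell_{1}}-\|x^{\dagger}\|_{\ell_{1}}\ge\langle\xi,h\rangle$ with $\xi=\sum_{i\in I(x^{\dagger})}\mathrm{sgn}(x_{i}^{\dagger})e_{i}$, multiplied by the prefactor $\|x\|_{\ell_{1}}+\|x^{\dagger}\|_{\ell_{1}}$ --- produces only terms linear in $h$, and moreover only sees the on-support components of $h$, since $\xi$ vanishes off $I(x^{\dagger})$. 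Concretely, for a perturbation $h$ supported off $I(x^{\dagger})$ your chain certifies only $\|x\|_{\ell_{1}}^{2}-\|x^{\dagger}\|_{\ell_{1}}^{2}\ge0$, whereas the reduction demands $\ge(1-\Gamma(1-\eta))\|h\|_{\ell_{2}}^{2}-(c_{1}-c_{2}\eta)\|F(x)-F(x^{\dagger})\|_{Y}$ (note $\langle x^{\dagger},h\rangle=0$ here); when $h$ lies near the kernel of $F'(x^{\dagger})$ --- which is exactly the ill-posed situation --- the data term cannot rescue this, so the estimate chain is strictly weaker than what is needed. Your treatment of the cross term via Lemma \ref{lem3.3} and \eqref{equ3.4} is correct and matches the paper, and your reduction is consistent with the inequality the paper actually derives (the paper's proof ends with a plus sign in front of the data term; the minus sign in the lemma statement is a typo), but the step you defer as ``the crux'' is the entire content of the lemma.

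The paper's proof avoids this dead end by a different decomposition: it writes $\mathcal{R}_{\eta}(x)=\mathcal{K}(x)+(1-\eta)\|x\|_{\ell_{2}}^{2}$ with $\mathcal{K}(x):=\|x\|_{\ell_{1}}^{2}-\|x\|_{\ell_{2}}^{2}\ge0$, so the coercive term appears exactly, not approximately, from the identity $(1-\eta)\bigl(\|x\|_{\ell_{2}}^{2}-\|x^{\dagger}\|_{\ell_{2}}^{2}\bigr)=(1-\eta)\|h\|_{\ell_{2}}^{2}+2(1-\eta)\langle x^{\dagger},h\rangle$; the squared $\ell_{1}$ norm is then only required to satisfy a first-order lower bound. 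That bound, $\mathcal{K}(x)-\mathcal{K}(x^{\dagger})\ge-(M_{1}+2\|x^{\dagger}\|_{\ell_{1}})\sum_{i\in I(x^{\dagger})}|x_{i}-x_{i}^{\dagger}|$, is obtained by splitting coordinates on and off the support and using $(a+b)^{2}\ge a^{2}+b^{2}$, which makes the off-support contributions of $\|x\|_{\ell_{1}}^{2}$ and $-\|x\|_{\ell_{2}}^{2}$ cancel; the surviving on-support sum is exactly what the source condition \eqref{equ3.1} controls, and this is where $c_{1}$ and $c_{2}$ arise. In short: the coercivity comes from the $-\eta\|\cdot\|_{\ell_{2}}^{2}$ part of the penalty via the $(1-\eta)$ split, not from $\|\cdot\|_{\ell_{1}}^{2}$, and a first-order, support-localized bound on the $\ell_{1}$ part then suffices. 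If you insist on your route, you would have to retain the off-support term $\|h_{I^{c}}\|_{\ell_{1}}^{2}\ge\|h_{I^{c}}\|_{\ell_{2}}^{2}$ from the exact expansion of $\|x\|_{\ell_{1}}^{2}$ rather than discarding it through the subgradient inequality, and absorb the on-support quadratic part using the uniform bound $M_{1}$ together with the source condition --- which, after reorganization, is essentially the paper's argument.
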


\begin{proof}
    By the definition of $\mathcal{R}_{\eta}\left(x\right)$ in \eqref{equ1.3}, we can expound that 
    \begin{align*}
        \mathcal{R}_{\eta}\left(x\right)
        & = \left\|x\right\|_{\ell_{1}}^{2}-\eta\left\|x\right\|_{\ell_{2}}^{2} = \left(\left\|x\right\|_{\ell_{1}}^{2}-\left\|x\right\|_{\ell_{2}}^{2}\right) +\left(1-\eta\right)\left\|x\right\|_{\ell_{2}}^{2} \\
        & = \mathcal{K}\left(x\right)+\left(1-\eta\right)\left\|x\right\|_{\ell_{2}}^{2},
    \end{align*}
    where $\mathcal{K}\left(x\right):=\left\|x\right\|_{\ell_{1}}^{2}- \left\|x\right\|_{\ell_{2}}^{2}$. Thus, we observe that 
    \begin{align}\label{equ3.7}
        \mathcal{R}_{\eta}\left(x\right)-\mathcal{R}_{\eta}\left(x^{\dagger}\right)
        & = \mathcal{K}\left(x\right)-\mathcal{K}\left(x^{\dagger}\right) +\left(1-\eta\right)\left(\left\|x\right\|_{\ell_{2}}^{2}- \left\|x^{\dagger}\right\|_{\ell_{2}}^{2}\right) \nonumber\\
        & = \mathcal{K}\left(x\right)-\mathcal{K}\left(x^{\dagger}\right) +\left(1-\eta\right)\left\|x-x^{\dagger}\right\|_{\ell_{2}}^{2} +2\left(1-\eta\right)\left<x^{\dagger}, x-x^{\dagger}\right>.  
    \end{align}
    From the definition of $\mathcal{K}\left(x\right)$, we can express
    \begin{equation*}
        \mathcal{K}\left(x\right)-\mathcal{K}\left(x^{\dagger}\right) = \left\|x\right\|_{\ell_{1}}^{2}-\left\|x\right\|_{\ell_{2}}^{2} - \left\|x^{\dagger}\right\|_{\ell_{1}}^{2}+ \left\|x^{\dagger}\right\|_{\ell_{2}}^{2}.
    \end{equation*}
    With the definition of the index set $I\left(x^{\dagger}\right)$ in \eqref{equ2.2}, we can derive that
    \begin{align*}
      \left\|x\right\|_{\ell_{1}}^{2} & = \left(\sum_{i\in I\left(x^{\dagger}\right)} \left|x_{i}\right| + \sum_{i\notin I\left(x^{\dagger}\right)}\left|x_{i}\right| \right)^{2} \geq \left(\sum_{i\in I\left(x^{\dagger}\right)} \left|x_{i}\right| \right)^{2} + \sum_{i\notin I\left(x^{\dagger}\right)}\left|x_{i}\right|^{2}, \\
      -\left\|x\right\|_{\ell_{2}}^{2} & = -\sum_{i\in I\left(x^{\dagger}\right)} \left|x_{i}\right|^{2} - \sum_{i\notin I\left(x^{\dagger}\right)} \left|x_{i}\right|^{2}.
    \end{align*}
    Consequently, we obtain
    \begin{equation*}
        \mathcal{K}\left(x\right)-\mathcal{K}\left(x^{\dagger}\right) \geq \left(\sum_{i\in I\left(x^{\dagger}\right)}\left|x_{i}\right|\right)^{2} -\left(\sum_{i\in I\left(x^{\dagger}\right)} \left|x_{i}^{\dagger}\right|\right)^{2} - \sum_{i\in I\left(x^{\dagger}\right)} \left(\left|x_{i}\right|^{2} - \left|x_{i}^{\dagger}\right|^{2}\right),
    \end{equation*}
    which can be rewritten as
    \begin{align}\label{equ3.8}
        \mathcal{K}\left(x\right)-\mathcal{K}\left(x^{\dagger}\right) 
        & \geq \left(\sum_{i\in I\left(x^{\dagger}\right)}\left|x_{i}\right| + \sum_{i\in I\left(x^{\dagger}\right)} \left|x_{i}^{\dagger}\right|\right)\left(\sum_{i\in I\left(x^{\dagger}\right)}\left(\left|x_{i}\right|-\left|x_{i}^{\dagger}\right |\right)\right) \nonumber\\
        & \quad -\sum_{i\in I\left(x^{\dagger}\right)} \left(\left|x_{i}\right|+\left|x_{i}^{\dagger}\right|\right) \left(\left|x_{i}\right|-\left|x_{i}^{\dagger}\right|\right).
    \end{align}
    By the definition of $M_{1}$, we find
    \begin{equation*}
        \sum_{i\in I\left(x^{\dagger}\right)}\left|x_{i}\right| + \sum_{i\in I\left(x^{\dagger}\right)} \left|x_{i}^{\dagger}\right| \leq M_{1} + \left\|x^{\dagger}\right\|_{\ell_{1}}.
    \end{equation*}
    Thus, we conclude that
    \begin{equation}\label{equ3.9}
        \left|x_{i}\right| + \left|x_{i}^{\dagger}\right| \leq M_{1} + \left\|x^{\dagger}\right\|_{\ell_{1}}, \quad \forall i\in I\left(x^{\dagger}\right).
    \end{equation}
    Meanwhile, we observe that
    \begin{equation}\label{equ3.10}
        0<\left\|x^{\dagger}\right\|_{\ell_{1}}\leq \sum_{i\in I\left(x^{\dagger}\right)}\left|x_{i}\right| + \sum_{i\in I\left(x^{\dagger}\right)} \left|x_{i}^{\dagger}\right|.
    \end{equation}
    By combining inequalities \eqref{equ3.8}, \eqref{equ3.9} and \eqref{equ3.10}, we derive that
    \begin{equation*}
        \mathcal{K}\left(x\right)-\mathcal{K}\left(x^{\dagger}\right) \geq -\left\|x^{\dagger}\right\|_{\ell_{1}} \sum_{i\in I\left(x^{\dagger}\right)}\left|x_{i}-x_{i}^{\dagger}\right| - \left(M_{1}+\left\|x^{\dagger}\right\|_{\ell_{1}}\right) \sum_{i\in I\left(x^{\dagger}\right)}\left|x_{i}-x_{i}^{\dagger}\right|,
    \end{equation*}
    i.e.,
    \begin{equation}\label{equ3.11}
        \mathcal{K}\left(x\right)-\mathcal{K}\left(x^{\dagger}\right) \geq -\left(M_{1}+2\left\|x^{\dagger}\right\|_{\ell_{1}}\right) \sum_{i\in I\left(x^{\dagger}\right)}\left|x_{i}-x_{i}^{\dagger}\right|.
    \end{equation}
    A combination of \eqref{equ3.7} and \eqref{equ3.11} leads to the following inequality
    \begin{align}\label{equ3.12}
        \left(1-\eta\right)\left\|x-x^{\dagger}\right\|_{\ell_{2}}^{2}
        & \leq \mathcal{R}_{\eta}\left(x\right)-\mathcal{R}_{\eta}\left(x^{\dagger}\right) + \left(M_{1}+2\left\|x^{\dagger}\right\|_{\ell_{1}}\right) \sum_{i\in I\left(x^{\dagger}\right)}\left|x_{i}-x_{i}^{\dagger}\right| \nonumber\\
        & \quad -2\left(1-\eta\right)\left<x^{\dagger}, x-x^{\dagger}\right>.
    \end{align}
    By referring to Assumption \ref{ass3.1} (1), we find that
    \begin{equation*}
        \left|x_{i}-x^{\dagger}_{i}\right|=\left|\left<e_{i}, x-x^{\dagger}\right> \right| =\left|\left<\omega_{i}, F'\left(x^{\dagger}\right) \left(x-x^{\dagger}\right)\right>\right|\leq \max_{i\in I\left(x^{\dagger}\right)}\left\|\omega_{i}\right\|_{Y} \left\|F'\left(x^{\dagger}\right)\left(x-x^{\dagger}\right)\right\|_{Y}.
    \end{equation*}
    Thus, we can conclude that
    \begin{equation}\label{equ3.13}
        \sum_{i\in I\left(x^{\dagger}\right)}\left|x_{i}-x_{i}^{\dagger}\right| \leq \left|I\left(x^{\dagger}\right)\right|\max_{i\in I\left(x^{\dagger}\right)} \left\|\omega_{i}\right\|_{Y} \left\|F'\left(x^{\dagger}\right)\left(x-x^{\dagger}\right)\right\|_{Y},
    \end{equation} 
    where $\left|I\left(x^{\dagger}\right)\right|$ denotes the cardinality of the index set $I\left(x^{\dagger}\right)$. Additionally, according to Lemma \ref{lem3.3}, we can ascertain that
    \begin{equation}\label{equ3.14}
        \left|\left<x^{\dagger}, x-x^{\dagger}\right>\right| = \left|\left<\omega^{\dagger}, F'\left(x\right)\left(x-x^{\dagger}\right) \right>\right| \leq \left\|\omega^{\dagger}\right\|_{Y} \left\|F'\left(x^{\dagger}\right)\left(x-x^{\dagger}\right)\right\|_{Y}.
    \end{equation}
    A combination of \eqref{equ3.4}, \eqref{equ3.12}, \eqref{equ3.13} and \eqref{equ3.14} implies that
    \begin{align*}
        & \quad \left(1-\eta\right)\left\|x-x^{\dagger}\right\|_{\ell_{2}}^{2} \\
        & \leq \mathcal{R}_{\eta}\left(x\right)-\mathcal{R}_{\eta}\left(x^{\dagger}\right) + \left(\left(M_{1}+2\left\|x^{\dagger}\right\|_{\ell_{1}}\right) \left|I\left(x^{\dagger}\right)\right|\max_{i\in I\left(x^{\dagger}\right)}\left\|\omega_{i}\right\|_{Y} + 2\left(1-\eta\right)\left\|\omega^{\dagger}\right\|_{Y}\right) \\
        & \quad \cdot\left(\frac{\gamma}{2}\left\|x-x^{\dagger}\right\|_{\ell_{2}}^{2}+ \left\|F\left(x\right)-F\left(x^{\dagger}\right)\right\|_{Y}\right),
    \end{align*}
    i.e.,
    \begin{equation*}
        \left\|x-x^{\dagger}\right\|_{\ell_{2}}^{2} \leq \frac{1}{1-\frac{\gamma\left(c_{1}-c_{2}\eta\right)}{2\left(1-\eta\right)}} \left[\frac{1}{1-\eta}\left(\mathcal{R}_{\eta}\left(x\right)- \mathcal{R}_{\eta}\left(x^{\dagger}\right)\right)+ \frac{c_{1}-c_{2}\eta}{1-\eta} \left\|F\left(x\right)-F\left(x^{\dagger}\right)\right\|_{Y}\right],
    \end{equation*}
    where $c_{1}$ and $c_{2}$ are defined by \eqref{equ3.5} and \eqref{equ3.6}.
\end{proof}

\begin{theorem}\label{the3.5}
    Suppose that Assumption \ref{ass3.1} holds. Let $x_{\alpha,\eta}^{\delta}$ be defined by \eqref{equ2.1}, and let the constants $c_{1}>c_{2}>0$ be as specified in Lemma \ref{lem3.4}. Assume $\Gamma:=\frac{\gamma\left(c_{1}-c_{2}\eta\right)}{2\left(1-\eta\right)}<1$. \\
    (1) If $q=1$ and $\alpha\left(c_{1}-c_{2}\eta\right) < 1$, then
    \begin{equation}\label{equ3.15}
        \left\|x_{\alpha,\eta}^{\delta}-x^{\dagger}\right\|_{\ell_{2}}^{2}\leq \frac{\left(1+\alpha\left(c_{1}-c_{2}\eta\right)\right)\delta}{\alpha\left(1-\eta\right) \left(1-\Gamma\right)} ,\quad \left\|F\left(x_{\alpha,\eta}^{\delta}\right)-y^{\delta}\right\|_{Y}\leq \frac{\left(1+\alpha\left(c_{1}-c_{2}\eta\right)\right)\delta}{1- \alpha\left(c_{1}-c_{2}\eta\right)}
    \end{equation}
    (2) If $q>1$, then
    \begin{align}\label{equ3.16}
        & \left\|x_{\alpha,\eta}^{\delta}-x^{\dagger}\right\|_{\ell_{2}}^{2} \leq \frac{1}{\alpha\left(1-\eta\right) \left(1-\Gamma\right)} \left[\frac{\delta^{q}}{q}+\alpha\left(c_{1}-c_{2}\eta\right)\delta+ \frac{\left(q-1\right)2^{\frac{1}{q-1}}\alpha^{\frac{q}{q-1}} \left(c_{1}-c_{2}\eta\right)^{\frac{q}{q-1}}}{q}\right], \nonumber\\ 
        & \left\|F\left(x_{\alpha,\eta}^{\delta}\right)-y^{\delta}\right\|_{Y}^{q} \leq q  \left[\frac{\delta^{q}}{q}+\alpha\left(c_{1}-c_{2}\eta\right)\delta+ \frac{\left(q-1\right)2^{\frac{1}{q-1}}\alpha^{\frac{q}{q-1}} \left(c_{1}-c_{2}\eta\right)^{\frac{q}{q-1}}}{q}\right].
    \end{align}
\end{theorem}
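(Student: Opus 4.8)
The plan is to turn the optimality of $x_{\alpha,\eta}^{\delta}$ into an a priori bound and then feed it into the variational inequality of Lemma \ref{lem3.4}. Writing $D:=\|F(x_{\alpha,\eta}^{\delta})-y^{\delta}\|_{Y}$ and using that the $\mathcal{R}_{\eta}$-minimizing solution satisfies $F(x^{\dagger})=y$, hence $\|F(x^{\dagger})-y^{\delta}\|_{Y}\leq\delta$, the inequality $\mathcal{J}_{\alpha,\eta}^{\delta}(x_{\alpha,\eta}^{\delta})\leq\mathcal{J}_{\alpha,\eta}^{\delta}(x^{\dagger})$ gives the master estimate
\begin{equation*}
\frac{1}{q}D^{q}+\alpha\bigl(\mathcal{R}_{\eta}(x_{\alpha,\eta}^{\delta})-\mathcal{R}_{\eta}(x^{\dagger})\bigr)\leq\frac{\delta^{q}}{q}.
\end{equation*}
In particular $\mathcal{R}_{\eta}(x_{\alpha,\eta}^{\delta})\leq\mathcal{R}_{\eta}(x^{\dagger})+\delta^{q}/(q\alpha)$; for the admissible parameter range this keeps $\mathcal{R}_{\eta}(x_{\alpha,\eta}^{\delta})$ below both $M$ and $\rho$, so that $x_{\alpha,\eta}^{\delta}$ is a legitimate argument for Lemma \ref{lem3.4}.

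Applying Lemma \ref{lem3.4} at $x=x_{\alpha,\eta}^{\delta}$ and abbreviating $C:=c_{1}-c_{2}\eta$, I would use two consequences simultaneously: the inequality itself,
\begin{equation*}
(1-\Gamma)(1-\eta)\|x_{\alpha,\eta}^{\delta}-x^{\dagger}\|_{\ell_{2}}^{2}\leq\bigl(\mathcal{R}_{\eta}(x_{\alpha,\eta}^{\delta})-\mathcal{R}_{\eta}(x^{\dagger})\bigr)+C\,\|F(x_{\alpha,\eta}^{\delta})-y\|_{Y},
\end{equation*}
and the fact that its non-negative left-hand side forces $\mathcal{R}_{\eta}(x_{\alpha,\eta}^{\delta})-\mathcal{R}_{\eta}(x^{\dagger})\geq-C\|F(x_{\alpha,\eta}^{\delta})-y\|_{Y}\geq-C(D+\delta)$, where I estimate $\|F(x_{\alpha,\eta}^{\delta})-y\|_{Y}\leq D+\delta$ by the triangle inequality together with $\|y-y^{\delta}\|_{Y}\leq\delta$.

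For $q=1$, substituting the lower bound into the master estimate collapses it to $D\,(1-\alpha C)\leq\delta\,(1+\alpha C)$, which under $\alpha C<1$ is the residual bound in \eqref{equ3.15}; reinserting $\mathcal{R}_{\eta}(x_{\alpha,\eta}^{\delta})-\mathcal{R}_{\eta}(x^{\dagger})\leq(\delta-D)/\alpha$ into the Lemma \ref{lem3.4} inequality and discarding the resulting non-positive multiple of $D$ gives the $\ell_{2}$ bound. For $q>1$, the same substitution yields $\tfrac{1}{q}D^{q}\leq\tfrac{\delta^{q}}{q}+\alpha C(D+\delta)$, and the decisive step is to dominate the linear term $\alpha C\,D$ by $D^{q}$ through Young's inequality with conjugate exponents $q$ and $q/(q-1)$ and a weight tuned so that only a controlled fraction of $D^{q}$ is absorbed; this is precisely what manufactures the constants $2^{1/(q-1)}$ and $\alpha^{q/(q-1)}$ appearing in \eqref{equ3.16}. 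The $\ell_{2}$ estimate then follows by inserting $\mathcal{R}_{\eta}(x_{\alpha,\eta}^{\delta})-\mathcal{R}_{\eta}(x^{\dagger})\leq(\delta^{q}-D^{q})/(q\alpha)$ into the variational inequality and bounding the resulting expression $\alpha C\,D-D^{q}/q$ by its maximum over $D\geq0$.

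The routine parts are the algebraic rearrangements and the whole $q=1$ case. The genuine obstacles are twofold: calibrating the Young weight in the $q>1$ case so that the absorbed portion of $D^{q}$ leaves exactly the constants claimed in \eqref{equ3.16}, and verifying that the sublevel-set membership $\mathcal{R}_{\eta}(x_{\alpha,\eta}^{\delta})\leq\min(M,\rho)$ required by Lemma \ref{lem3.4} genuinely holds, which quantitatively constrains how large $\delta^{q}/\alpha$ is allowed to be.
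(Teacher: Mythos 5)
Your proposal is correct and follows essentially the same route as the paper's proof: the minimality comparison against $x^{\dagger}$ yielding the master estimate, the application of Lemma \ref{lem3.4} combined with the triangle inequality $\left\|F\left(x_{\alpha,\eta}^{\delta}\right)-F\left(x^{\dagger}\right)\right\|_{Y}\leq \left\|F\left(x_{\alpha,\eta}^{\delta}\right)-y^{\delta}\right\|_{Y}+\delta$, direct algebra for $q=1$, and the weighted Young's inequality (with the $2^{1/q}$--$2^{-1/q}$ split) for $q>1$, exactly as in the paper's \eqref{equ3.17}--\eqref{equ3.18}. Your variant of bounding $\alpha\left(c_{1}-c_{2}\eta\right)D-D^{q}/q$ by its maximum over $D\geq 0$ for the $\ell_{2}$ estimate is a minor refinement that in fact yields a slightly sharper constant (without the factor $2^{\frac{1}{q-1}}$), and your observation that the absorbed half of $D^{q}$ produces $2q\left[\cdots\right]$ rather than $q\left[\cdots\right]$ in the residual bound matches what the paper's own proof actually delivers.
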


\begin{proof}
    By the definition of $x_{\alpha,\eta}^{\delta}$, it follows that
    \begin{equation*}
        \frac{1}{q}\left\||F\left(x_{\alpha,\eta}^{\delta}\right)-y^{\delta}\right\|_{Y}^{q} + \alpha\mathcal{R}_{\eta}\left(x_{\alpha,\eta}^{\delta}\right) \leq \frac{1}{q}\left\||F\left(x^{\dagger}\right)-y^{\delta}\right\|_{Y}^{q} + \alpha\mathcal{R}_{\eta}\left(x^{\dagger}\right),
    \end{equation*}
    which implies
    \begin{equation*}
        \frac{\delta^{q}}{q} \geq \alpha\mathcal{R}_{\eta}\left(x_{\alpha,\eta}^{\delta}\right) - \alpha\mathcal{R}_{\eta}\left(x^{\dagger}\right) + \frac{1}{q} \left\||F\left(x_{\alpha,\eta}^{\delta}\right)-y^{\delta}\right\|_{Y}^{q}.
    \end{equation*}
    Consequently, since $\mathcal{R}_{\eta}$ is bounded, applying Lemma \ref{lem3.4} yields
    \begin{align}\label{equ3.17}
        \frac{\delta^{q}}{q}
        & \geq \alpha\left(1-\Gamma\right)\left(1-\eta\right)\left\|x_{\alpha,\eta}^{\delta}- x^{\dagger}\right\|_{\ell_{2}}^{2}-\alpha\left(c_{1}-c_{2}\eta\right) \left\|F\left(x_{\alpha,\eta}^{\delta}\right)-F\left(x^{\dagger}\right)\right\|_{Y} +\frac{1}{q}\left\||F\left(x_{\alpha,\eta}^{\delta}\right)-y^{\delta}\right\|_{Y}^{q} \nonumber\\
        & \geq \alpha\left(1-\Gamma\right)\left(1-\eta\right)\left\|x_{\alpha,\eta}^{\delta}- x^{\dagger}\right\|_{\ell_{2}}^{2}-\alpha\left(c_{1}-c_{2}\eta\right) \left\|F\left(x_{\alpha,\eta}^{\delta}\right)-y^{\delta}\right\|_{Y} \nonumber\\ 
        & \quad -\alpha\left(c_{1}-c_{2}\eta\right)\delta +\frac{1}{q} \left\||F\left(x_{\alpha,\eta}^{\delta}\right)-y^{\delta}\right\|_{Y}^{q}.
    \end{align}
    Therefore, if $q=1$ and $\alpha\left(c_{1}-c_{2}\eta\right)< 1$, then \eqref{equ3.15} holds.
    
    \par Conversely, when $q>1$, we apply Young's inequality $ab\leq\frac{a^{q}}{q}+ \frac{b^{q^{*}}}{q^{*}}$ for $a,b>0$ and $q^{*}>1$ satisfying $\frac{1}{q}+\frac{1}{q^{*}}=1$. We obtain
    \begin{align}\label{equ3.18}
        \alpha\left(c_{1}-c_{2}\eta\right) \left\|F\left(x_{\alpha,\eta}^{\delta}\right)-y^{\delta}\right\|_{Y} 
        & = 2^{\frac{1}{q}} \alpha\left(c_{1}-c_{2}\eta\right) 2^{-\frac{1}{q}} \left\|F\left(x_{\alpha,\eta}^{\delta}\right)-y^{\delta}\right\|_{Y} \nonumber\\
        & \leq \frac{1}{2q}\left\|F\left(x_{\alpha,\eta}^{\delta}\right)- y^{\delta}\right\|_{Y} + \frac{\left(q-1\right)2^{\frac{1}{q-1}} \left(c_{1}-c_{2}\eta\right)\frac{q}{q-1}}{q}.
    \end{align}
    A combination of \eqref{equ3.17} and \eqref{equ3.18} leads to the conclusion that \eqref{equ3.16} holds.    
\end{proof}

\par Note that by the definition of $x_{\alpha,\eta}^{\delta}$,
\begin{equation*}
    \frac{1}{q}\left\|F\left(x_{\alpha,\eta}^{\delta}\right)-y^{\delta}\right\|_{Y}^{q}+ \alpha\mathcal{R}_{\eta}\left(x_{\alpha,\eta}^{\delta}\right)\leq \frac{1}{q}\left\|F\left(0\right)-y^{\delta}\right\|_{Y}^{q}+ \alpha\mathcal{R}_{\eta}\left(0\right) = \frac{1}{q}\left\|F\left(0\right)-y^{\delta}\right\|_{Y}^{q}.
\end{equation*}
Hence, the upper bound of $\mathcal{R}_{\eta}\left(x_{\alpha,\eta}^{\delta}\right)$ depends on the noise level $\delta$.

\begin{remark}\label{rem3.6}
    \textbf{(A priori estimation)} Let $\eta\alpha$ be a fixed constant. For the case when $q>1$, if $\alpha\sim\delta^{q-1}$, then $\left\|x_{\alpha,\eta}^{\delta}-x^{\dagger}\right\|_{\ell_{2}}\leq c\delta^{1/2}$ for some constant $c>0$. In the particular case when $q=1$, if $\alpha\sim\delta^{1-\epsilon}$ with $\left(0<\epsilon<1\right)$, then $\left\|x_{\alpha,\eta}^{\delta}-x^{\dagger}\right\|_{\ell_{2}}\leq c\delta^{\epsilon/2}$ for some constant $c>0$.
\end{remark}

\par Note that due to the presence of the term $\frac{\gamma}{2}\left\|x-x^{\dagger}\right\|_{\ell_{2}}^{2}$ in the estimation \eqref{equ3.4}, we need an additional condition to obtain the convergence rate. Specifically, $\gamma>0$ must be small enough for $\Gamma<1$. This additional condition is similar to the requirement $\gamma\left\|\omega\right\|<1$ in classical quadratic regularization \cite{EHN96}.

\begin{theorem}\label{the3.7}
    \textbf{(Discrepancy principle)} Keep the assumptions of Lemma \ref{lem3.4} and let $x_{\alpha,\eta}^{\delta}$ be defined by \eqref{equ2.1}. Assume there exist parameters $\alpha>0$ and $0<\eta<1$ that are determined by the discrepancy principle
    \begin{equation*}
        \delta\leq\left\|F\left(x_{\alpha,\eta}^{\delta}\right)-y^{\delta}\right\|_{Y} \leq \tau\delta,\quad \tau\geq 1.
    \end{equation*}
    Then
    \begin{equation*}
        \left\|x_{\alpha,\eta}^{\delta}-x^{\dagger}\right\|_{\ell_{2}}\leq \sqrt{\frac{\left(c_{1}-c_{2}\eta\right)\left(\tau+1\right)\delta} {\left(1-\Gamma\right)\left(1-\eta\right)}}.
    \end{equation*}
\end{theorem}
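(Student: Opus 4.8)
The plan is to apply the estimate of Lemma~\ref{lem3.4} to the particular choice $x=x_{\alpha,\eta}^{\delta}$ and to show that the discrepancy principle forces the $\mathcal{R}_{\eta}$-difference term on its right-hand side to be nonpositive, so that only the residual term survives. Concretely, I would first verify that $x_{\alpha,\eta}^{\delta}$ is admissible for Lemma~\ref{lem3.4}, i.e.\ that $\mathcal{R}_{\eta}(x_{\alpha,\eta}^{\delta})\le M$ and $\mathcal{R}_{\eta}(x_{\alpha,\eta}^{\delta})\le\rho$. This follows from the comparison estimate established in the next step together with $\mathcal{R}_{\eta}(x^{\dagger})<\rho$ from Assumption~\ref{ass3.1}(2), taking $M=\mathcal{R}_{\eta}(x^{\dagger})$ (which then fixes the associated constants $c_{1},c_{2}$).

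The two quantities to control are $\mathcal{R}_{\eta}(x_{\alpha,\eta}^{\delta})-\mathcal{R}_{\eta}(x^{\dagger})$ and $\left\|F(x_{\alpha,\eta}^{\delta})-F(x^{\dagger})\right\|_{Y}$. For the first, I would use the minimality of $x_{\alpha,\eta}^{\delta}$ tested against the competitor $x^{\dagger}$, which gives
\[
\alpha\big(\mathcal{R}_{\eta}(x_{\alpha,\eta}^{\delta})-\mathcal{R}_{\eta}(x^{\dagger})\big)\le \tfrac{1}{q}\left\|F(x^{\dagger})-y^{\delta}\right\|_{Y}^{q}-\tfrac{1}{q}\left\|F(x_{\alpha,\eta}^{\delta})-y^{\delta}\right\|_{Y}^{q}.
\]
Since $F(x^{\dagger})=y$ yields $\left\|F(x^{\dagger})-y^{\delta}\right\|_{Y}=\left\|y-y^{\delta}\right\|_{Y}\le\delta$, while the \emph{lower} bound of the discrepancy principle gives $\left\|F(x_{\alpha,\eta}^{\delta})-y^{\delta}\right\|_{Y}\ge\delta$, the right-hand side is at most $\tfrac{1}{q}(\delta^{q}-\delta^{q})=0$; hence $\mathcal{R}_{\eta}(x_{\alpha,\eta}^{\delta})-\mathcal{R}_{\eta}(x^{\dagger})\le 0$ (which also establishes the admissibility claimed above). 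For the second quantity, the triangle inequality together with $F(x^{\dagger})=y$, the upper bound $\left\|F(x_{\alpha,\eta}^{\delta})-y^{\delta}\right\|_{Y}\le\tau\delta$, and $\left\|y^{\delta}-y\right\|_{Y}\le\delta$ yields $\left\|F(x_{\alpha,\eta}^{\delta})-F(x^{\dagger})\right\|_{Y}\le(\tau+1)\delta$.

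Finally I would substitute these two bounds into the estimate of Lemma~\ref{lem3.4} (in the form with the $+\frac{c_{1}-c_{2}\eta}{1-\eta}\left\|F(x)-F(x^{\dagger})\right\|_{Y}$ term actually derived at the end of its proof). Dropping the nonpositive $\mathcal{R}_{\eta}$-difference contribution leaves
\[
\left\|x_{\alpha,\eta}^{\delta}-x^{\dagger}\right\|_{\ell_{2}}^{2}\le\frac{1}{1-\Gamma}\cdot\frac{c_{1}-c_{2}\eta}{1-\eta}\,(\tau+1)\delta,
\]
and taking square roots gives exactly the claimed bound.

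The main obstacle is the sign bookkeeping in the first step: the whole argument hinges on recognizing that the discrepancy principle is precisely what makes $\mathcal{R}_{\eta}(x_{\alpha,\eta}^{\delta})\le\mathcal{R}_{\eta}(x^{\dagger})$, with the lower bound $\delta\le\left\|F(x_{\alpha,\eta}^{\delta})-y^{\delta}\right\|_{Y}$ being essential here (not merely the upper bound), so that the term of otherwise indefinite sign drops out and the residual bound alone controls the error. One should also take care to invoke Lemma~\ref{lem3.4} with the plus sign in front of the residual term, as in the final line of its proof, rather than the minus sign appearing in its displayed statement; with a minus sign the right-hand side would be nonpositive and the conclusion vacuous.
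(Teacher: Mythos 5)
Your proposal is correct and takes essentially the same route as the paper's own proof: minimality of $x_{\alpha,\eta}^{\delta}$ combined with the \emph{lower} discrepancy bound yields $\mathcal{R}_{\eta}(x_{\alpha,\eta}^{\delta})\le\mathcal{R}_{\eta}(x^{\dagger})$, the triangle inequality with the upper bound gives $\left\|F(x_{\alpha,\eta}^{\delta})-F(x^{\dagger})\right\|_{Y}\le(\tau+1)\delta$, and substituting both into Lemma~\ref{lem3.4} and dropping the nonpositive $\mathcal{R}_{\eta}$-difference produces the stated estimate. Your observation about the sign in front of the residual term is also apt: the paper's proof of Theorem~\ref{the3.7} indeed uses the plus-sign inequality derived at the end of the proof of Lemma~\ref{lem3.4}, not the minus-sign version in that lemma's displayed statement.
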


\begin{proof}
    By the definition of $x_{\alpha,\eta}^{\delta}$, $\alpha$ and $\eta$, we see that
    \begin{align}\label{equ3.19}
        \frac{\delta^{q}}{q}+\alpha\mathcal{R}_{\eta}\left(x_{\alpha,\eta}^{\delta}\right) & \leq \frac{1}{q}\left\||F\left(x_{\alpha,\eta}^{\delta}\right)- y^{\delta}\right\|_{Y}^{q} + \alpha\mathcal{R}_{\eta} \left(x_{\alpha,\eta}^{\delta}\right) \nonumber\\
        & \leq \frac{1}{q}\left\||F\left(x^{\dagger}\right)-y^{\delta}\right\|_{Y}^{q} + \alpha\mathcal{R}_{\eta}\left(x^{\dagger}\right)
    \end{align}
    Hence, we have $\mathcal{R}_{\eta}\left(x_{\alpha,\eta}^{\delta}\right)\leq \mathcal{R}_{\eta}\left(x^{\dagger}\right)$. From Lemma \ref{lem3.4}, it follows that 
    \begin{align}\label{equ3.20}
        0 \geq \mathcal{R}_{\eta}\left(x_{\alpha,\eta}^{\delta}\right) - \mathcal{R}_{\eta}\left(x^{\dagger}\right)
        & \geq \left(1-\Gamma\right)\left(1-\eta\right) \left\|x_{\alpha,\eta}^{\delta}-x^{\dagger}\right\|_{\ell_{2}}^{2}- \left(c_{1}-c_{2}\eta\right) \left\|F\left(x\right)-F\left(x^{\dagger}\right)\right\|_{Y} \nonumber\\
        & \geq \left(1-\Gamma\right)\left(1-\eta\right) \left\|x_{\alpha,\eta}^{\delta}-x^{\dagger}\right\|_{\ell_{2}}^{2}- \left(c_{1}-c_{2}\eta\right)\left(\tau+1\right)\delta.
    \end{align}
    Then, we obtain
    \begin{equation*}
        \left\|x_{\alpha,\eta}^{\delta}-x^{\dagger}\right\|_{\ell_{2}}^{2}\leq \frac{\left(c_{1}-c_{2}\eta\right)\left(\tau+1\right)\delta} {\left(1-\Gamma\right)\left(1-\eta\right)}.
    \end{equation*}
    This concludes the proof of the theorem.
\end{proof}

\par It is important to note that a drawback of the discrepancy principle is that a regularization parameter satisfying
\begin{equation*}
    \delta\leq\left\|F\left(x_{\alpha,\eta}^{\delta}\right)-y^{\delta}\right\|_{Y} \leq \tau\delta,\quad \tau\geq 1.
\end{equation*}
might not exist for general nonlinear operators $F$. In fact, for nonlinear operators, it is often challenging to ensure the existence of the regularization parameter $\alpha$ as determined by Morozov's discrepancy principle. Therefore, we need to impose specific conditions on $F$. This paper primarily focuses on the convergence rate under Morozov's discrepancy principle. 

\subsection{Convergence rate $\mathcal{O}\left(\delta\right)$}\label{sec3.2}

\par In \cite[p.6]{KNS08}, it is noted that for ill-posed problems, \eqref{equ3.3} does not provide sufficient information about the local behavior of $F$ around $x^{\dagger}$ to draw conclusions regarding convergence. This is because the left-hand side of \eqref{equ3.3} can be significantly smaller than the right-hand side for certain pairs of points $x$ and $x^{\dagger}$. Consequently, several researchers have proposed an alternative condition given by
\begin{equation}\label{equ3.21}
    \left\|F\left(x\right)-F\left(x^{\dagger}\right)-F'\left(x^{\dagger}\right)\left(x- x^{\dagger}\right)\right\|_{Y}\leq \gamma\left\|F\left(x\right)- F\left(x^{\dagger}\right)\right\|_{Y},\quad 0<\gamma<\frac{1}{2}
\end{equation}
as a criterion for the nonlinearity of $F$; see \cite[pp.278-279]{EHN96}, \cite[p.6]{KNS08}, and \cite[PP.69-70]{SKHK12}.

\begin{assumption}\label{ass3.8}
    Let $x^{\dagger}\neq 0$ be an $\mathcal{R}_{\eta}$-minimizing solution of the problem $F\left(x\right)=y$ that is sparse. Furthermore, we assume that \\
    (1) $F$ is Fr\'{e}chet-differentiable at $x^{\dagger}$. For each $i\in I\left(x^{\dagger}\right)$, there exists an $\omega_{i}\in Y$ such that
    \begin{equation}\label{equ3.22}
        e_{i}=F'\left(x^{\dagger}\right)^{*}\omega_{i}
    \end{equation}
    holds, where $I\left(x^{\dagger}\right)$ is defined in \eqref{equ2.2}. \\
    (2) There exists constants $0<\gamma<\frac{1}{2}$, $\rho>0$, and $\mathcal{R}_{\eta}\left(x^{\dagger}\right)<\rho$ such that
    \begin{equation}\label{equ3.23}
        \left\|F\left(x\right)-F\left(x^{\dagger}\right)-F'\left(x^{\dagger}\right) \left(x-x^{\dagger}\right)\right\|_{Y} \leq \gamma\left\|F\left(x\right)-F\left(x^{\dagger}\right)\right\|_{Y}
    \end{equation}
    for any $x\in\ell_{2}$ with $\mathcal{R}_{\eta}\left(x\right)\leq\rho$.
\end{assumption}

\begin{remark}\label{rem3.9}
    Utilizing the triangle inequality, it follows from \eqref{equ3.23} that
    \begin{equation}\label{equ3.24}
        \frac{1}{1+\gamma}\left\|F'\left(x^{\dagger}\right) \left(x-x^{\dagger}\right)\right\|_{Y}\leq \left\|F\left(x\right)-F\left(x^{\dagger}\right)\right\|_{Y}\leq \frac{1}{1-\gamma}\left\|F'\left(x^{\dagger}\right) \left(x-x^{\dagger}\right)\right\|_{Y},
    \end{equation}
    which provides two-sides bounds on $\left\|F'\left(x^{\dagger}\right) \left(x-x^{\dagger}\right)\right\|_{Y}$. The condition
    \begin{equation*}
        \left\|F'\left(x^{\dagger}\right) \left(x-x^{\dagger}\right)\right\|_{Y}\leq \left(1+\gamma\right)\left\|F\left(x\right)-F\left(x^{\dagger}\right)\right\|_{Y}
    \end{equation*}
    has been adopted by several researchers. This assumption directly leads to a bound on the critical inner product $\left<F'\left(x^{\dagger}\right) \left(x-x^{\dagger}\right),\omega_{i}\right>$.
\end{remark}

\par Next, we derive an inequality based on the condition \eqref{equ3.23}. The linear convergence rate $\mathcal{O}\left(\delta\right)$ follows directly from the inequality.

\begin{lemma}\label{lem3.10}
    Let Assumption \ref{ass3.8} hold, and suppose that $\mathcal{R}_{\eta}\left(x\right)\leq M$ for some $M>0$. Then there exist constants $c_{3}>0$ and $c_{4}>c_{5}$ such that
    \begin{equation}\label{equ3.25}
        c_{3}\left(1-\eta\right)\left\|x-x^{\dagger}\right\|_{\ell_{1}}\leq \mathcal{R}_{\eta}\left(x\right)-\mathcal{R}_{\eta}\left(x^{\dagger}\right)+ \left(c_{4}-c_{5}\eta\right)\left\|F\left(x\right)- F\left(x^{\dagger}\right)\right\|_{Y}
    \end{equation}
    for any $x\in\ell_{2}$ satisfying $\mathcal{R}_{\eta}\left(x\right)\leq\rho$.
\end{lemma}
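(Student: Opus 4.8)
The plan is to follow the architecture of Lemma \ref{lem3.4} but to retain the off-support $\ell_{1}$-mass \emph{linearly} instead of converting it into an $\ell_{2}$-quantity; this is exactly what turns the square-root bound of Lemma \ref{lem3.4} into the linear one. Starting from the identity \eqref{equ3.7}, namely $\mathcal{R}_{\eta}\left(x\right)-\mathcal{R}_{\eta}\left(x^{\dagger}\right)=\left[\mathcal{K}\left(x\right)-\mathcal{K}\left(x^{\dagger}\right)\right]+\left(1-\eta\right)\left\|x-x^{\dagger}\right\|_{\ell_{2}}^{2}+2\left(1-\eta\right)\left\langle x^{\dagger},x-x^{\dagger}\right\rangle$, I would split every $\ell_{1}$-sum over $I\left(x^{\dagger}\right)$ and its complement, writing $S:=\sum_{i\in I\left(x^{\dagger}\right)}\left|x_{i}\right|$ and $T:=\sum_{i\notin I\left(x^{\dagger}\right)}\left|x_{i}\right|$. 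Since $x^{\dagger}$ is sparse, $T$ is precisely the off-support part of $\left\|x-x^{\dagger}\right\|_{\ell_{1}}$, and the goal becomes to show that $\mathcal{R}_{\eta}\left(x\right)-\mathcal{R}_{\eta}\left(x^{\dagger}\right)$ controls a positive multiple of $T$ up to a quantity proportional to $\left\|F\left(x\right)-F\left(x^{\dagger}\right)\right\|_{Y}$.

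The decisive step is a lower bound on $\mathcal{K}\left(x\right)-\mathcal{K}\left(x^{\dagger}\right)$ that is linear in $T$. Expanding $\left\|x\right\|_{\ell_{1}}^{2}=\left(S+T\right)^{2}$ isolates the cross term $2ST$, which I keep rather than discard. The on-support contribution $\mathcal{K}\left(x|_{I\left(x^{\dagger}\right)}\right)-\mathcal{K}\left(x^{\dagger}\right)$ can be bounded by running the combinatorial estimate \eqref{equ3.8}--\eqref{equ3.11} verbatim on the finite set $I\left(x^{\dagger}\right)$, giving a lower bound $-\left(M_{1}+2\left\|x^{\dagger}\right\|_{\ell_{1}}\right)\sum_{i\in I\left(x^{\dagger}\right)}\left|x_{i}-x_{i}^{\dagger}\right|$, while the remaining off-support piece $T^{2}-\sum_{i\notin I\left(x^{\dagger}\right)}\left|x_{i}\right|^{2}\geq0$ is simply dropped. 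For the cross term I use $S\geq\left\|x^{\dagger}\right\|_{\ell_{1}}-\sum_{i\in I\left(x^{\dagger}\right)}\left|x_{i}-x_{i}^{\dagger}\right|$ together with $T\leq\left\|x\right\|_{\ell_{1}}\leq M_{1}$ to get $2ST\geq2\left\|x^{\dagger}\right\|_{\ell_{1}}T-2M_{1}\sum_{i\in I\left(x^{\dagger}\right)}\left|x_{i}-x_{i}^{\dagger}\right|$. Combining, I obtain $\mathcal{K}\left(x\right)-\mathcal{K}\left(x^{\dagger}\right)\geq2\left\|x^{\dagger}\right\|_{\ell_{1}}T-C\sum_{i\in I\left(x^{\dagger}\right)}\left|x_{i}-x_{i}^{\dagger}\right|$ for an explicit constant $C>0$, an estimate that crucially involves \emph{no} $\ell_{2}$ inner product.

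To close the argument I would substitute this into \eqref{equ3.7}, discard $\left(1-\eta\right)\left\|x-x^{\dagger}\right\|_{\ell_{2}}^{2}\geq0$, and estimate the two residual quantities through the source condition. By Assumption \ref{ass3.8}(1), each $\left|x_{i}-x_{i}^{\dagger}\right|=\left|\left\langle\omega_{i},F'\left(x^{\dagger}\right)\left(x-x^{\dagger}\right)\right\rangle\right|$, and, using the analogue of Lemma \ref{lem3.3} ($x^{\dagger}=F'\left(x^{\dagger}\right)^{*}\omega^{\dagger}$), also $\left|\left\langle x^{\dagger},x-x^{\dagger}\right\rangle\right|=\left|\left\langle\omega^{\dagger},F'\left(x^{\dagger}\right)\left(x-x^{\dagger}\right)\right\rangle\right|$; both are then dominated by $\left\|F'\left(x^{\dagger}\right)\left(x-x^{\dagger}\right)\right\|_{Y}\leq\left(1+\gamma\right)\left\|F\left(x\right)-F\left(x^{\dagger}\right)\right\|_{Y}$ via \eqref{equ3.24}. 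Because the inner product enters \eqref{equ3.7} with the factor $2\left(1-\eta\right)$, its contribution to the coefficient of $\left\|F\left(x\right)-F\left(x^{\dagger}\right)\right\|_{Y}$ equals $2\left(1+\gamma\right)\left\|\omega^{\dagger}\right\|_{Y}\left(1-\eta\right)$, which is exactly what generates the term $-c_{5}\eta$ with $c_{5}=2\left(1+\gamma\right)\left\|\omega^{\dagger}\right\|_{Y}$. Choosing $c_{3}=2\left\|x^{\dagger}\right\|_{\ell_{1}}$ and using $1-\eta\leq1$ gives $c_{3}\left(1-\eta\right)T\leq2\left\|x^{\dagger}\right\|_{\ell_{1}}T$, while the on-support contribution $c_{3}\left(1-\eta\right)\sum_{i\in I\left(x^{\dagger}\right)}\left|x_{i}-x_{i}^{\dagger}\right|$ is absorbed into the $\left\|F\left(x\right)-F\left(x^{\dagger}\right)\right\|_{Y}$ term; the leftover $\eta$-free constants are collected into $c_{4}$, yielding $c_{4}>c_{5}>0$ and the claimed inequality \eqref{equ3.25}.

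The main obstacle is the initial modelling decision rather than any single estimate. One must extract $T$ \emph{linearly}, through the cross term $2ST$ and the bound $S\geq\left\|x^{\dagger}\right\|_{\ell_{1}}-\sum_{i\in I\left(x^{\dagger}\right)}\left|x_{i}-x_{i}^{\dagger}\right|$, rather than via $\sum_{i\notin I\left(x^{\dagger}\right)}\left|x_{i}\right|^{2}$ as in Lemma \ref{lem3.4}; and one must route the $\ell_{2}$ inner product exclusively through the $\left(1-\eta\right)\left\|\cdot\right\|_{\ell_{2}}^{2}$ summand so that it carries the weight $1-\eta$. Had I instead bounded $\left\|x\right\|_{\ell_{1}}^{2}-\left\|x^{\dagger}\right\|_{\ell_{1}}^{2}$ by a subgradient inequality while leaving $-\eta\left\|x-x^{\dagger}\right\|_{\ell_{2}}^{2}$ intact, the inner-product term would acquire the factor $\eta$ rather than $1-\eta$, and the sign of the $\eta$-dependence in the constant would come out wrong. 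A final bookkeeping check is that $C$ and the source-condition constants are finite, which follows from the finiteness of $I\left(x^{\dagger}\right)$ (sparsity) and $\left\|x\right\|_{\ell_{1}}\leq M_{1}$ (coercivity), ensuring $c_{3}>0$ and $c_{4}>c_{5}>0$.
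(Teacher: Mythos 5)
Your proposal is correct, but it takes a genuinely different route from the paper's own proof, which is essentially a citation-plus-assembly argument. The paper imports the key $\ell_{1}$ inequality \eqref{equ3.26} directly from \cite[Lemma 3.7]{LD25}: a purely combinatorial estimate,
$\left(1-\eta\right)\left\|x-x^{\dagger}\right\|_{\ell_{1}}\leq \frac{\mathcal{R}_{\eta}\left(x\right)-\mathcal{R}_{\eta}\left(x^{\dagger}\right)}{\left\|x^{\dagger}\right\|_{\ell_{1}}}+\left(2+\frac{M_{1}}{\left\|x^{\dagger}\right\|_{\ell_{1}}}\eta\right)\sum_{i\in I\left(x^{\dagger}\right)}\left|x_{i}-x_{i}^{\dagger}\right|$,
which involves no $\ell_{2}$ inner product at all; it then bounds the support sum by the source condition \eqref{equ3.22} and the nonlinearity estimate \eqref{equ3.24} exactly as you do, and clears the denominator $\left\|x^{\dagger}\right\|_{\ell_{1}}$ to read off $c_{3},c_{4},c_{5}$. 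You instead rebuild the key inequality from scratch inside this paper: starting from identity \eqref{equ3.7}, keeping the cross term $2ST$ in $\left\|x\right\|_{\ell_{1}}^{2}=\left(S+T\right)^{2}$ so that the off-support mass $T$ is extracted linearly, recycling the on-support estimate \eqref{equ3.8}--\eqref{equ3.11}, discarding $\left(1-\eta\right)\left\|x-x^{\dagger}\right\|_{\ell_{2}}^{2}\geq 0$, and routing the residual inner product $2\left(1-\eta\right)\left<x^{\dagger},x-x^{\dagger}\right>$ through $x^{\dagger}=F'\left(x^{\dagger}\right)^{*}\omega^{\dagger}$ (the analogue of Lemma \ref{lem3.3}, which is available because Assumption \ref{ass3.8}(1) coincides with \eqref{equ3.1}). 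I checked your steps: the bound $2ST\geq 2\left\|x^{\dagger}\right\|_{\ell_{1}}T-2M_{1}\sum_{i\in I\left(x^{\dagger}\right)}\left|x_{i}-x_{i}^{\dagger}\right|$, the dropped nonnegative terms, and the final bookkeeping are all sound, and they yield admissible constants $c_{3}=2\left\|x^{\dagger}\right\|_{\ell_{1}}>0$ and $c_{4}>c_{5}=2\left(1+\gamma\right)\left\|\omega^{\dagger}\right\|_{Y}>0$. What each approach buys: the paper's proof is much shorter and its constants never involve $\omega^{\dagger}$ (only the individual $\omega_{i}$, with $c_{5}=-M_{1}\left|I\left(x^{\dagger}\right)\right|\max_{i}\left\|\omega_{i}\right\|_{Y}\left(1+\gamma\right)$ actually negative), while yours is self-contained, reuses machinery already displayed in the proof of Lemma \ref{lem3.4}, and makes explicit where the linear --- rather than quadratic --- control of $\left\|x-x^{\dagger}\right\|_{\ell_{1}}$ originates, namely in the cross term $2ST$.

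One small correction to your closing remark: the claim that routing the inner product so that it carries the factor $\eta$ instead of $1-\eta$ would make the sign of the $\eta$-dependence ``come out wrong'' is not accurate. The lemma is existential and only requires $c_{4}>c_{5}$ with $c_{4}-c_{5}\eta>0$; the paper's own constants have $c_{5}<0$, so a coefficient that grows with $\eta$ is perfectly acceptable. Your routing is a valid choice, not a forced one.
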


\begin{proof}
    By Lemma \ref{lem2.4} (1), if $M>0$, there exists $M_{1}>0$ such that $\mathcal{R}_{\eta}\left(x\right)\leq M$ for $x\in\ell_{2}$ implies $\left\|x\right\|_{\ell_{1}}\leq M_{1}$. Following a similar reasoning as in the proof of \cite[Lemma 3.7]{LD25}, we obtain
    \begin{equation}\label{equ3.26}
        \left(1-\eta\right)\left\|x-x^{\dagger}\right\|_{\ell_{1}}\leq \frac{\mathcal{R}_{\eta}\left(x\right)-\mathcal{R}_{\eta}\left(x^{\dagger}\right)} {\left\|x^{\dagger}\right\|_{\ell_{1}}} +\left(2+\frac{M_{1}}{\left\|x^{\dagger}\right\|_{\ell_{1}}}\eta\right) \sum_{i\in I\left(x^{\dagger}\right)}\left|x_{i}-x_{i}^{\dagger}\right|.
    \end{equation} 
    Additionally, by Assumption \ref{ass3.8},
    \begin{equation*}
        \left|x_{i}-x^{\dagger}_{i}\right|=\left|\left<e_{i}, x-x^{\dagger}\right> \right| =\left|\left<\omega_{i}, F'\left(x^{\dagger}\right) \left(x-x^{\dagger}\right)\right>\right|\leq \max_{i\in I\left(x^{\dagger}\right)}\left\|\omega_{i}\right\|_{Y} \left\|F'\left(x^{\dagger}\right)\left(x-x^{\dagger}\right)\right\|_{Y}.
    \end{equation*}
    Consequently,
    \begin{equation*}
        \sum_{i\in I\left(x^{\dagger}\right)}\left|x_{i}-x_{i}^{\dagger}\right| \leq \left|I\left(x^{\dagger}\right)\right|\max_{i\in I\left(x^{\dagger}\right)} \left\|\omega_{i}\right\|_{Y} \left\|F'\left(x^{\dagger}\right)\left(x-x^{\dagger}\right)\right\|_{Y}.
    \end{equation*} 
    Then, by \eqref{equ3.23}, we find that
    \begin{equation}\label{equ3.27}
        \sum_{i\in I\left(x^{\dagger}\right)}\left|x_{i}-x_{i}^{\dagger}\right| \leq \left|I\left(x^{\dagger}\right)\right|\max_{i\in I\left(x^{\dagger}\right)} \left\|\omega_{i}\right\|_{Y} \left(1+\gamma\right)\left\|F\left(x\right)-F\left(x^{\dagger}\right)\right\|_{Y}.
    \end{equation}
    By combining \eqref{equ3.26} and \eqref{equ3.27}, we obtain
    \begin{align}\label{equ3.28}
        \left(1-\eta\right)\left\|x-x^{\dagger}\right\|_{\ell_{1}}
        & \leq \frac{\mathcal{R}_{\eta}\left(x\right)-\mathcal{R}_{\eta}\left(x^{\dagger}\right)} {\left\|x^{\dagger}\right\|_{\ell_{1}}} \nonumber\\
        & \quad +\left(2+\frac{M_{1}}{\left\|x^{\dagger}\right\|_{\ell_{1}}}\eta\right) \left|I\left(x^{\dagger}\right)\right|\max_{i\in I\left(x^{\dagger}\right)} \left\|\omega_{i}\right\|_{Y} \left(1+\gamma\right)\left\|F\left(x\right)-F\left(x^{\dagger}\right)\right\|_{Y},
    \end{align}
    i.e.,
    \begin{equation*}
        c_{3}\left(1-\eta\right)\left\|x-x^{\dagger}\right\|_{\ell_{1}}\leq \mathcal{R}_{\eta}\left(x\right)-\mathcal{R}_{\eta}\left(x^{\dagger}\right)+ \left(c_{4}-c_{5}\eta\right)\left\|F\left(x\right)- F\left(x^{\dagger}\right)\right\|_{Y},
    \end{equation*}
    where $c_{3}=\left\|x^{\dagger}\right\|_{\ell_{1}}$ and 
    \begin{equation*}
        c_{4}=2c_{3}\left|I\left(x^{\dagger}\right)\right|\max_{i\in I\left(x^{\dagger}\right)} \left\|\omega_{i}\right\|_{Y} \left(1+\gamma\right), \quad c_{5}=-M_{1}\left|I\left(x^{\dagger}\right)\right|\max_{i\in I\left(x^{\dagger}\right)} \left\|\omega_{i}\right\|_{Y} \left(1+\gamma\right).
    \end{equation*}
    It is evident that $c_{4}>c_{5}$ and $c_{4}-c_{5}\eta>0$. The proof is completed.
\end{proof}

\par Note that the proof of Lemma 3.7 in \cite{LD25} can be completed up to (3.8) in \cite{LD25}, where (3.8) corresponds to this estimate by choosing $M_{1}$ as the right-hand side of \eqref{equ3.9}. With Lemma \ref{lem3.10} available, we can derive the following result, with a proof that closely follows that of \cite[Theorem 2.18]{LD25}.

\begin{theorem}\label{the3.11}
    Suppose Assumption \ref{ass3.8} is satisfied. Let $x_{\alpha,\eta}^{\delta}$ be defined by \eqref{equ2.1}, and let the constants $c_{3}>0$ and $c_{4}>c_{5}$ be as stated in Lemma \ref{lem3.10}. \\
    (1) If $q=1$ and $1-\alpha\left(c_{4}-c_{5}\eta\right)>0$, then
    \begin{equation}\label{equ3.29}
        \left\|x_{\alpha,\eta}^{\delta}-x^{\dagger}\right\|_{\ell_{1}}\leq \frac{1+\alpha\left(c_{4}-c_{5}\eta\right)} {c_{3}\alpha\left(1-\eta\right)}\delta ,\quad \left\|F\left(x_{\alpha,\eta}^{\delta}\right)-y^{\delta}\right\|_{Y}\leq \frac{1+\alpha\left(c_{4}-c_{5}\eta\right)}{1- \alpha\left(c_{4}-c_{5}\eta\right)}\delta.
    \end{equation}
    (2) If $q>1$, then
    \begin{align}\label{equ3.30}
        & \left\|x_{\alpha,\eta}^{\delta}-x^{\dagger}\right\|_{\ell_{1}} \leq \frac{1}{c_{3}\alpha\left(1-\eta\right)} \left[\frac{\delta^{q}}{q}+\alpha\left(c_{4}-c_{5}\eta\right)\delta+ \frac{\left(q-1\right)2^{\frac{1}{q-1}} \alpha^{\frac{q}{q-1}} \left(c_{1}-c_{2}\eta\right)^{\frac{q}{q-1}}}{q}\right], \nonumber\\ 
        & \left\|F\left(x_{\alpha,\eta}^{\delta}\right)-y^{\delta}\right\|_{Y}^{q} \leq q  \left[\frac{\delta^{q}}{q}+\alpha\left(c_{4}-c_{5}\eta\right)\delta+ \frac{\left(q-1\right)2^{\frac{1}{q-1}} \alpha^{\frac{q}{q-1}} \left(c_{4}-c_{5}\eta\right)^{\frac{q}{q-1}}}{q}\right].
    \end{align}
\end{theorem}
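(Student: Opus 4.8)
The plan is to mirror the proof of Theorem \ref{the3.5} verbatim in structure, simply replacing the quadratic $\ell_{2}$-estimate of Lemma \ref{lem3.4} by the linear $\ell_{1}$-estimate \eqref{equ3.25} of Lemma \ref{lem3.10}; it is precisely the fact that the left-hand side of \eqref{equ3.25} is first order in $\|x-x^{\dagger}\|_{\ell_{1}}$ (rather than second order as in Lemma \ref{lem3.4}) that upgrades the rate from $\mathcal{O}(\delta^{1/2})$ to $\mathcal{O}(\delta)$. First I would invoke the minimizing property of $x_{\alpha,\eta}^{\delta}$ together with the comparison element $x^{\dagger}$, and use $F(x^{\dagger})=y$ with $\|y-y^{\delta}\|_{Y}\leq\delta$, to obtain
\begin{equation*}
    \frac{\delta^{q}}{q}\geq \alpha\left(\mathcal{R}_{\eta}(x_{\alpha,\eta}^{\delta})-\mathcal{R}_{\eta}(x^{\dagger})\right)+\frac{1}{q}\left\|F(x_{\alpha,\eta}^{\delta})-y^{\delta}\right\|_{Y}^{q}.
\end{equation*}
Before applying Lemma \ref{lem3.10} I must check its hypothesis $\mathcal{R}_{\eta}(x_{\alpha,\eta}^{\delta})\leq\rho$ (and $\leq M$), which follows from the a priori bound $\mathcal{R}_{\eta}(x_{\alpha,\eta}^{\delta})\leq\frac{1}{q\alpha}\|F(0)-y^{\delta}\|_{Y}^{q}$ recorded just after Theorem \ref{the3.5}, for $\alpha$ not too small relative to $\delta$.

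Next I would substitute the lower bound $\mathcal{R}_{\eta}(x_{\alpha,\eta}^{\delta})-\mathcal{R}_{\eta}(x^{\dagger})\geq c_{3}(1-\eta)\|x_{\alpha,\eta}^{\delta}-x^{\dagger}\|_{\ell_{1}}-(c_{4}-c_{5}\eta)\|F(x_{\alpha,\eta}^{\delta})-F(x^{\dagger})\|_{Y}$ from \eqref{equ3.25}, and then replace the fidelity term via the triangle inequality $\|F(x_{\alpha,\eta}^{\delta})-F(x^{\dagger})\|_{Y}\leq\|F(x_{\alpha,\eta}^{\delta})-y^{\delta}\|_{Y}+\delta$, giving
\begin{equation*}
    \frac{\delta^{q}}{q}\geq \alpha c_{3}(1-\eta)\left\|x_{\alpha,\eta}^{\delta}-x^{\dagger}\right\|_{\ell_{1}}-\alpha(c_{4}-c_{5}\eta)\left\|F(x_{\alpha,\eta}^{\delta})-y^{\delta}\right\|_{Y}-\alpha(c_{4}-c_{5}\eta)\delta+\frac{1}{q}\left\|F(x_{\alpha,\eta}^{\delta})-y^{\delta}\right\|_{Y}^{q}.
\end{equation*}
For $q=1$ the residual terms combine into $\left(1-\alpha(c_{4}-c_{5}\eta)\right)\|F(x_{\alpha,\eta}^{\delta})-y^{\delta}\|_{Y}$; the standing hypothesis $1-\alpha(c_{4}-c_{5}\eta)>0$ makes this coefficient positive, so discarding the $\ell_{1}$ term yields the residual bound in \eqref{equ3.29}, while discarding the residual term and solving for $\|x_{\alpha,\eta}^{\delta}-x^{\dagger}\|_{\ell_{1}}$ yields the error bound. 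For $q>1$ I would instead apply Young's inequality exactly as in \eqref{equ3.18} to split $\alpha(c_{4}-c_{5}\eta)\|F(x_{\alpha,\eta}^{\delta})-y^{\delta}\|_{Y}$ into a term $\tfrac{1}{2q}\|F(x_{\alpha,\eta}^{\delta})-y^{\delta}\|_{Y}^{q}$ absorbed by the residual and a constant term of order $\alpha^{q/(q-1)}$, producing \eqref{equ3.30}.

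The computations here are entirely routine, so the only genuine point requiring care is the bookkeeping of the signs and coefficients $(c_{4}-c_{5}\eta)$, noting that $c_{5}<0$ by the explicit formula in Lemma \ref{lem3.10} and hence $c_{4}-c_{5}\eta>0$ throughout, together with verifying the applicability condition $\mathcal{R}_{\eta}(x_{\alpha,\eta}^{\delta})\leq\rho$ used to invoke Lemma \ref{lem3.10}. I expect no conceptual obstacle; the work is a direct transcription of the argument of Theorem \ref{the3.5}, with the linear-in-$\ell_{1}$ form of \eqref{equ3.25} accounting for the improved $\mathcal{O}(\delta)$ rate.
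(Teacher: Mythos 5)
Your proposal is correct and is essentially the paper's intended argument: the paper omits a written proof for Theorem \ref{the3.11}, stating only that it ``closely follows'' the earlier template, which is exactly the proof of Theorem \ref{the3.5} with Lemma \ref{lem3.10} substituted for Lemma \ref{lem3.4} --- precisely what you reconstruct, including the triangle-inequality step, the sign observation $c_{5}<0$, and the Young's-inequality splitting for $q>1$. Your extra care in checking $\mathcal{R}_{\eta}(x_{\alpha,\eta}^{\delta})\leq\rho$ before invoking Lemma \ref{lem3.10} goes slightly beyond what the paper records and is a point in your favor.
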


\par Obviously, if $\alpha\sim\delta^{q-1}$ with $q>1$, then $\left\|x_{\alpha,\eta}^{\delta}-x^{\dagger}\right\|_{\ell_{2}}\leq c\delta$ for some constant $c>0$. In the specific case when $q=1$ and $\alpha\sim\delta^{1-\epsilon}$ where $(0<\epsilon<1)$, we have $\left\|x_{\alpha,\eta}^{\delta}-x^{\dagger}\right\|_{\ell_{2}}\leq c\delta^{1-\epsilon}$ for some constant $c>0$, where $0<\eta< 1$.

\par It is essential to note that we cannot obtain the inequality \eqref{equ3.25} if the restriction \eqref{equ3.23} is replaced by \eqref{equ3.2}. Moreover, the results above regarding the convergence rate are valid only for the case $0<\eta< 1$. When $\eta=1$, Lemma \ref{lem3.4} and Lemma \ref{lem3.10} lose their relevance, rendering the proofs of the convergence rate invalid.

\par If the regularization parameter $\alpha$ is determined by Morozov's discrepancy principle, establish the convergence rate $\mathcal{O}\left(\delta\right)$, as demonstrated in \cite[Theorem 3.10]{LD25}.

\begin{theorem}\label{the3.12}
    \textbf{(Discrepancy principle)} Maintain the assumptions of Lemma \ref{lem3.10} and let $x_{\alpha,\eta}^{\delta}$ be defined by \eqref{equ2.1}. If there exist parameters $\alpha>0$ and $0<\eta<1$ such that
    \begin{equation*}
        \delta\leq\left\|F\left(x_{\alpha,\eta}^{\delta}\right)-y^{\delta}\right\|_{Y} \leq \tau\delta,\quad \tau\geq 1.
    \end{equation*}
    Then it follows that
    \begin{equation*}
        \left\|x_{\alpha,\eta}^{\delta}-x^{\dagger}\right\|_{\ell_{2}}\leq \frac{\left(c_{4}-c_{5}\eta\right)\left(\tau+1\right)\delta}{1-\eta}.
    \end{equation*}
\end{theorem}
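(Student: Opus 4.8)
The plan is to mirror the argument already used for the discrepancy principle under the $\mathcal{O}\left(\delta^{1/2}\right)$ rate (Theorem~\ref{the3.7}), but to invoke Lemma~\ref{lem3.10} in place of Lemma~\ref{lem3.4}. The starting point is the minimizing property of $x_{\alpha,\eta}^{\delta}$ combined with the two sides of the discrepancy inequality. First I would use the lower bound $\delta\leq\left\|F\left(x_{\alpha,\eta}^{\delta}\right)-y^{\delta}\right\|_{Y}$ and the admissibility of $x^{\dagger}$ (so that $\left\|F\left(x^{\dagger}\right)-y^{\delta}\right\|_{Y}=\left\|y-y^{\delta}\right\|_{Y}\leq\delta$) to write
\begin{equation*}
    \frac{\delta^{q}}{q}+\alpha\mathcal{R}_{\eta}\left(x_{\alpha,\eta}^{\delta}\right)
    \leq \frac{1}{q}\left\|F\left(x_{\alpha,\eta}^{\delta}\right)-y^{\delta}\right\|_{Y}^{q}+\alpha\mathcal{R}_{\eta}\left(x_{\alpha,\eta}^{\delta}\right)
    \leq \frac{1}{q}\left\|F\left(x^{\dagger}\right)-y^{\delta}\right\|_{Y}^{q}+\alpha\mathcal{R}_{\eta}\left(x^{\dagger}\right)
    \leq \frac{\delta^{q}}{q}+\alpha\mathcal{R}_{\eta}\left(x^{\dagger}\right),
\end{equation*}
from which the key inequality $\mathcal{R}_{\eta}\left(x_{\alpha,\eta}^{\delta}\right)\leq\mathcal{R}_{\eta}\left(x^{\dagger}\right)$ follows after cancelling $\delta^{q}/q$ and dividing by $\alpha$. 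This step simultaneously certifies the constraint $\mathcal{R}_{\eta}\left(x_{\alpha,\eta}^{\delta}\right)\leq\rho$ required to apply Lemma~\ref{lem3.10}, since $\mathcal{R}_{\eta}\left(x^{\dagger}\right)<\rho$ by Assumption~\ref{ass3.8}.

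Next I would apply Lemma~\ref{lem3.10} with $x=x_{\alpha,\eta}^{\delta}$. Because the difference $\mathcal{R}_{\eta}\left(x_{\alpha,\eta}^{\delta}\right)-\mathcal{R}_{\eta}\left(x^{\dagger}\right)$ is nonpositive, it can be discarded from the right-hand side of \eqref{equ3.25}, leaving
\begin{equation*}
    c_{3}\left(1-\eta\right)\left\|x_{\alpha,\eta}^{\delta}-x^{\dagger}\right\|_{\ell_{1}}
    \leq \left(c_{4}-c_{5}\eta\right)\left\|F\left(x_{\alpha,\eta}^{\delta}\right)-F\left(x^{\dagger}\right)\right\|_{Y}.
\end{equation*}
The residual on the right is then controlled by the triangle inequality together with the upper side of the discrepancy principle and the noise bound, namely $\left\|F\left(x_{\alpha,\eta}^{\delta}\right)-F\left(x^{\dagger}\right)\right\|_{Y}\leq\left\|F\left(x_{\alpha,\eta}^{\delta}\right)-y^{\delta}\right\|_{Y}+\left\|y^{\delta}-y\right\|_{Y}\leq\tau\delta+\delta=\left(\tau+1\right)\delta$, where I use $F\left(x^{\dagger}\right)=y$. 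Combining these yields the $\ell_{1}$-estimate, and passing to the $\ell_{2}$-norm via the elementary embedding $\left\|\cdot\right\|_{\ell_{2}}\leq\left\|\cdot\right\|_{\ell_{1}}$ delivers the asserted rate.

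I do not expect any genuine obstacle: once Lemma~\ref{lem3.10} is in hand, the argument is a short chain of the kind already executed in Theorem~\ref{the3.7}. The only points demanding care are bookkeeping. First, the lower bound in the discrepancy principle is exactly what makes $\delta^{q}/q$ cancel so that no extra misfit term survives into the variational inequality, in contrast to the a priori version in Theorem~\ref{the3.11}. Second, one should track the constant $c_{3}=\left\|x^{\dagger}\right\|_{\ell_{1}}$ from Lemma~\ref{lem3.10}: the natural bound carries a factor $1/c_{3}$ in the denominator, so matching the clean form stated here amounts to absorbing or normalizing this constant, and the transition from the $\ell_{1}$-norm to the $\ell_{2}$-norm should be made explicit. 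Since $\eta\in\left(0,1\right)$ throughout, the factor $1-\eta$ remains strictly positive and the resulting bound is well defined.
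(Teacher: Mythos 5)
Your proof is correct and is essentially the paper's intended argument: the paper omits a proof of Theorem \ref{the3.12} (deferring to \cite[Theorem 3.10]{LD25}), but the evident argument is the direct analogue of the proof of Theorem \ref{the3.7} with Lemma \ref{lem3.10} in place of Lemma \ref{lem3.4}, which is exactly what you do --- minimality plus the lower discrepancy bound gives $\mathcal{R}_{\eta}\left(x_{\alpha,\eta}^{\delta}\right)\leq\mathcal{R}_{\eta}\left(x^{\dagger}\right)$ (which also certifies $\mathcal{R}_{\eta}\left(x_{\alpha,\eta}^{\delta}\right)\leq\rho$), and then Lemma \ref{lem3.10}, the triangle inequality, and $\left\|\cdot\right\|_{\ell_{2}}\leq\left\|\cdot\right\|_{\ell_{1}}$ yield the rate. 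Your bookkeeping remark about $c_{3}=\left\|x^{\dagger}\right\|_{\ell_{1}}$ is also apt: the derivation genuinely produces $c_{3}\left(1-\eta\right)$ in the denominator (consistent with the bound in Theorem \ref{the3.11}(1)), so the missing factor $1/c_{3}$ in the stated estimate is an oversight in the paper's statement rather than a gap in your argument.
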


\section{Computational approach}\label{sec4}

\par In this section, we propose a half-variation algorithm based on the proximal gradient method to address problem \eqref{equ1.2}. We demonstrate the convergence of the algorithm and show that the proximal gradient method can be utilized for the $\left\|x\right\|_{\ell_{1}}^{2}-\eta\left\|x\right\|_{\ell_{2}}^{2}$ sparsity regularization where $\left(0<\eta\leq 1\right)$, specifically for nonlinear inverse problems.

\subsection{Proximal gradient method}\label{sec4.1}

\par We begin with a brief overview of the proximal gradient method. The foundation for this discussion is based on \cite[Chapter 10]{B17}, which proposes a proximal gradient method for solving minimization problems of the form
\begin{equation}\label{equ4.1}
    \min_{x\in X}\left\{f\left(x\right)+g\left(x\right)\right\},
\end{equation}
where $X$ represents a Hilbert space and we assume the following.

\begin{assumption}\label{ass4.1}
    1. $g: X\rightarrow (-\infty, \infty]$ is proper closed and convex.
    \item 2. $f: X\rightarrow (-\infty, \infty]$ is proper and closed. ${\rm dom}\left(g\right) \subset {\rm int}\left({\rm dom}\left(f\right)\right)$, and the gradient of $f\left(x\right)$ is $L_{f}$-Lipschits continuous over ${\rm int}\left({\rm dom}\left(f\right)\right)$.
    \item 3. The optimal set of problem \eqref{equ4.1} is nonempty.
\end{assumption}

\par For the solution of problem \eqref{equ4.1}, it is natural to generalize the above idea and to define the next iterate as the minimizer of the sum of the linearization of $f$ around $x^k$, the non-smooth function $g$, and a quadratic prox term:
\begin{equation}\label{equ4.2}
    x^{k+1}=\arg\min_{x\in X}\left\{f\left(x^{k}\right)+\left<\nabla f\left(x^{k}\right), x-x^{k}\right>+g\left(x^{k}\right)+\frac{1}{2t_{k}}\left\|x-x^{k}\right\|^{2}\right\},
\end{equation}
where $t^{k}$ is the stepsize at iteration $k$. After some simple algebraic manipulation and cancellation of constant terms, \eqref{equ4.2} can be rewritten as
\begin{equation*}
    x^{k+1}=\arg\min_{x\in X}\left\{t_{k}g\left(x^{k}\right)+ \frac{1}{2}\left\|x-\left(x^{k}-t_{k}\nabla f\left(x^{k}\right)\right)\right\|^{2}\right\},
\end{equation*}
which by the definition of the proximal operator is the same as
\begin{equation*}
    x^{k+1}={\rm prox}_{t_{k}g}\left(x^{k}-t_{k}\nabla f\left(x^{k}\right)\right).
\end{equation*}

\par The proximal gradient method with taking the step-sizes $t_{k}=\frac{1}{L_{k}}$ with $L_{k}\in\left(\frac{L_{f}}{2}, +\infty\right)$ is initiated in the form of Algorithm \ref{alg1}. 

\begin{algorithm}
\caption{Proximal gradient method}
\begin{algorithmic}\label{alg1}
\STATE{\textbf{Initialization}: Set $k=0$, $x_{0}\in {\rm int}\left(X\right)$.}
\STATE{\textbf{General step}: for any $k=0,1,2,\cdots$ execute the following steps:}
\STATE{\qquad 1: pick $L_{k}\in\left(\frac{L_{f}}{2}, +\infty\right)$ for $L_{f}>0$;}
\STATE{\qquad 2: set 
\begin{equation*}
    x^{k+1}={\rm prox}_{t_{k}g}\left(x^{k}-\frac{1}{L_{k}}\nabla f\left(x^{k}\right)\right);
\end{equation*}
\STATE{\qquad 3: check the stopping criterion and return $x^{k+1}$ as a solution.}}
\end{algorithmic}
\end{algorithm}

\par To enhance the clarity of the article, we recall some results regarding the proximal operator of the squared $\ell_{1}$-norm, as found in \cite{B17}, along with the definition of the half variation (HV) function, as detailed in \cite{LD25}. The expression $\left\|x\right\|_{\ell_{1}}^{2}$ can be framed as the optimal value of an optimization problem characterized by the function
\begin{align}\label{equ4.3}
    \varphi\left(s,t\right)=
    \left\{             
             \begin{array}{ll}
             \frac{s^{2}}{t}, & if \ t>0, \\
             0, & if \ s=0,\ t=0, \\
             \infty, & else. 
             \end{array}
    \right.
\end{align}
We recommend consulting \cite[Section 6.8.2]{B17} for readers seeking detailed proof of the following lemmas.

\begin{lemma}\label{lem4.2}
    \textbf{(Variational representation of $\left\|\cdot\right\|_{\ell_{1}}^{2}$)} For a Hilbert space $X$ and any $x\in X$, the following holds:
    \begin{equation}\label{equ4.4}
        \min_{\lambda}\left\{\sum_{i=1}^{n}\varphi\left(x_{i},\lambda_{i} \right)\right\}=\left\|x\right\|_{\ell_{1}}^{2},
    \end{equation}
    where $\varphi$ is defined in \eqref{equ4.3}. An optimal solution $\overline{\lambda}$ to the minimization problem outlined in \eqref{equ4.4} is given by
    \begin{align*}
        \overline{\lambda}_{i}=
        \left\{             
             \begin{array}{ll}
             \frac{\left|x_{i}\right|}{\left\|x\right\|_{\ell_{1}}}, & if \ x\neq 0, \\
             \frac{1}{n}, & if \ x=0,
             \end{array}
        \right.
        i=1,2,\cdots,n. 
    \end{align*}
\end{lemma}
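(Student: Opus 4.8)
The plan is to read Lemma~\ref{lem4.2} as the standard variational (or ``perspective'') representation of the squared $\ell_{1}$-norm, in which the minimization is tacitly taken over the unit simplex $\Delta_{n}=\{\lambda\in\RR^{n}:\lambda_{i}\geq 0,\ \sum_{i=1}^{n}\lambda_{i}=1\}$; this constraint is implicit, because the proposed optimizer $\overline{\lambda}$ already satisfies $\sum_{i}\overline{\lambda}_{i}=1$, and without it the infimum would degenerate to $0$ (take $\lambda_{i}\to\infty$). I would prove the identity in two halves: a lower bound showing $\sum_{i}\varphi(x_{i},\lambda_{i})\geq\|x\|_{\ell_{1}}^{2}$ for every feasible $\lambda$, and an attainment step verifying that the stated $\overline{\lambda}$ meets this bound with equality.

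First I would clear away the degenerate directions created by the extended-real-valued $\varphi$. If $\lambda_{i}=0$ for some index with $x_{i}\neq 0$, then $\varphi(x_{i},\lambda_{i})=\infty$, so such $\lambda$ is irrelevant to the minimum; hence it suffices to consider feasible $\lambda$ with $\lambda_{i}>0$ on the support $S=\{i:x_{i}\neq 0\}$, while on the complement $\varphi(0,\lambda_{i})=0$ contributes nothing. Restricting the sum to $S$ and invoking the Cauchy--Schwarz inequality together with $\sum_{i}\lambda_{i}=1$ gives
\begin{equation*}
    \|x\|_{\ell_{1}}^{2}=\left(\sum_{i\in S}\frac{|x_{i}|}{\sqrt{\lambda_{i}}}\sqrt{\lambda_{i}}\right)^{2}\leq\left(\sum_{i\in S}\frac{x_{i}^{2}}{\lambda_{i}}\right)\left(\sum_{i\in S}\lambda_{i}\right)\leq\sum_{i}\varphi(x_{i},\lambda_{i}),
\end{equation*}
which is the desired lower bound.

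For attainment, when $x\neq 0$ I would substitute $\overline{\lambda}_{i}=|x_{i}|/\|x\|_{\ell_{1}}$ and compute $\varphi(x_{i},\overline{\lambda}_{i})=|x_{i}|\,\|x\|_{\ell_{1}}$ for $i\in S$, so that summing over $S$ reproduces $\|x\|_{\ell_{1}}^{2}$, while the off-support terms vanish by the convention $\varphi(0,0)=0$; the case $x=0$ is immediate since $\overline{\lambda}_{i}=1/n$ yields $\varphi(0,1/n)=0$ and $\|0\|_{\ell_{1}}^{2}=0$. The routine core (Cauchy--Schwarz) is easy, so the main obstacle is purely the bookkeeping at the boundary of the simplex: one must handle the $+\infty$ and $0/0$ conventions of $\varphi$ carefully enough that the minimum is genuinely attained (not merely approached) and that the Cauchy--Schwarz equality condition $\lambda_{i}\propto|x_{i}|$ aligns exactly with the claimed $\overline{\lambda}$.
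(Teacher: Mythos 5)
Your proof is correct and follows the standard route: the paper itself gives no proof of this lemma, deferring instead to \cite[Section 6.8.2]{B17}, and your Cauchy--Schwarz lower bound combined with direct verification of attainment at $\overline{\lambda}_{i}=\left|x_{i}\right|/\left\|x\right\|_{\ell_{1}}$ is essentially the argument given there. Your opening observation is also a genuine catch rather than mere bookkeeping: as literally written in \eqref{equ4.4} the minimum is unconstrained in $\lambda$, under which the infimum would degenerate to $0$ (take $\lambda_{i}\rightarrow\infty$), so the restriction to the unit simplex $\sum_{i}\lambda_{i}=1$, $\lambda_{i}\geq 0$ that you make explicit is indeed required for the identity to hold and is implicit in both the cited reference and the form of the stated optimizer.
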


\begin{lemma}\label{lem4.3}
    \textbf{(Proximal operator of $\left\|\cdot\right\|_{\ell_{1}}^{2}$)} Let $g:X\rightarrow \mathcal{R}$ be given by $g\left(x\right)=\left\|x\right\|_{\ell_{1}}^{2}$, and let $\alpha>0$. Then we have 
    \begin{equation*}
        {\rm prox}_{\alpha g}\left(x\right)=
        \left\{             
             \begin{array}{ll}
             \left(\frac{\lambda_{i}x_{i}}{\lambda_{i}+2\alpha}\right)_{i=1}^{n}, & if \ x\neq 0, \\
             0, & if \ x=0,
             \end{array}
        \right.
    \end{equation*}
    where $\lambda_{i}=\left[\frac{\sqrt{\alpha}\left|x_{i}\right|}{\sqrt{\mu^{*}}}-2\alpha\right]_{+}$ with $\mu^{*}$ being any positive root of the nonincreasing function 
    \begin{equation*}
        \psi\left(\mu\right)=\sum_{i=1}^{n}\left[\frac{\sqrt{\alpha}\left|x_{i}\right|} {\sqrt{\mu}}-2\alpha\right]_{+}-1.
    \end{equation*}
\end{lemma}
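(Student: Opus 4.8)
The plan is to evaluate the proximal map directly from its definition,
\begin{equation*}
    {\rm prox}_{\alpha g}\left(x\right)=\arg\min_{u}\left\{\frac{1}{2}\left\|u-x\right\|^{2}+\alpha\left\|u\right\|_{\ell_{1}}^{2}\right\},
\end{equation*}
and to use the variational representation of Lemma~\ref{lem4.2} to dispose of the awkward squared $\ell_{1}$-term. The case $x=0$ is immediate, since then the objective is minimized at $u=0$; so assume $x\neq 0$. Replacing $\left\|u\right\|_{\ell_{1}}^{2}$ by $\min_{\lambda}\sum_{i}\varphi\left(u_{i},\lambda_{i}\right)$, where the minimum is taken over the unit simplex $\lambda_{i}\geq 0$, $\sum_{i}\lambda_{i}=1$ (as the optimal $\overline\lambda$ of Lemma~\ref{lem4.2} shows, this constraint is implicit), the proximal problem turns into the joint minimization of $\sum_{i}\big\{\tfrac12(u_{i}-x_{i})^{2}+\alpha u_{i}^{2}/\lambda_{i}\big\}$ over $(u,\lambda)$. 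At the level of values the two infima may be swapped freely, and since $\varphi$ is the perspective of $s\mapsto s^{2}$ and hence jointly convex, the joint problem is convex and its minimizer's $u$-component is exactly ${\rm prox}_{\alpha g}(x)$; the $u$-part is moreover strictly convex, so this point is unique.

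First I would carry out the inner minimization over $u$ for a fixed feasible $\lambda$. The objective separates across coordinates, and the stationarity condition $(u_{i}-x_{i})+2\alpha u_{i}/\lambda_{i}=0$ yields
\begin{equation*}
    u_{i}=\frac{\lambda_{i}x_{i}}{\lambda_{i}+2\alpha},
\end{equation*}
precisely the claimed form (and consistent with $u_{i}=0$ whenever $\lambda_{i}=0$, which is forced by $\varphi(u_{i},0)=\infty$ for $u_{i}\neq 0$). Substituting this back, using $u_{i}-x_{i}=-2\alpha x_{i}/(\lambda_{i}+2\alpha)$, a short computation collapses the inner value to $\sum_{i}\alpha x_{i}^{2}/(\lambda_{i}+2\alpha)$, so the remaining problem is
\begin{equation*}
    \min_{\lambda}\Big\{\sum_{i=1}^{n}\frac{\alpha x_{i}^{2}}{\lambda_{i}+2\alpha}\ :\ \lambda_{i}\geq 0,\ \textstyle\sum_{i=1}^{n}\lambda_{i}=1\Big\}.
\end{equation*}

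Next I would solve this convex program via the KKT conditions. Introducing a multiplier $\mu$ for the equality constraint and $\nu_{i}\geq 0$ for $\lambda_{i}\geq 0$, stationarity reads $-\alpha x_{i}^{2}/(\lambda_{i}+2\alpha)^{2}+\mu-\nu_{i}=0$. On the active set $\lambda_{i}>0$ one has $\nu_{i}=0$, hence $\lambda_{i}+2\alpha=\sqrt{\alpha}\left|x_{i}\right|/\sqrt{\mu}$, whereas $\lambda_{i}=0$ forces $\sqrt{\alpha}\left|x_{i}\right|/\sqrt{\mu}\leq 2\alpha$ through $\nu_{i}\geq 0$; the two cases combine into $\lambda_{i}=\left[\sqrt{\alpha}\left|x_{i}\right|/\sqrt{\mu}-2\alpha\right]_{+}$. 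Imposing the simplex constraint $\sum_{i}\lambda_{i}=1$ is exactly the equation $\psi(\mu)=0$. To close the argument I would note that $\psi$ is continuous and non-increasing on $(0,\infty)$ (each truncated summand is non-increasing in $\mu$), with $\psi(\mu)\to+\infty$ as $\mu\to 0^{+}$ and $\psi(\mu)\to -1$ as $\mu\to+\infty$, so a positive root $\mu^{*}$ exists; by uniqueness of the prox point, any such root produces the same $\lambda$ and hence the same $u$.

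The genuine work, and the step I expect to be the main obstacle, is the constrained optimization in $\lambda$: the swap of the two infima is automatic, but the active-set (complementary-slackness) analysis on the simplex must be handled carefully to obtain the truncation $[\cdot]_{+}$ and to identify the multiplier $\mu$ as the root of $\psi$, while keeping the degenerate coordinates $\lambda_{i}=0$ consistent with $u_{i}=0$. The remaining algebra (the substitution collapsing the inner value and verifying the monotonicity and limits of $\psi$) is routine, and the overall structure mirrors the treatment in \cite[Section~6.8.2]{B17}.
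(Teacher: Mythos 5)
Your proof is correct and takes essentially the same route as the paper, which gives no argument of its own but defers to \cite[Section 6.8.2]{B17}: the variational representation of $\left\|\cdot\right\|_{\ell_{1}}^{2}$ over the unit simplex, the exchange of the two minimizations, the coordinatewise quadratic solve giving $u_{i}=\lambda_{i}x_{i}/(\lambda_{i}+2\alpha)$, and the KKT/active-set analysis producing $\lambda_{i}=\left[\sqrt{\alpha}\left|x_{i}\right|/\sqrt{\mu^{*}}-2\alpha\right]_{+}$ together with the root equation $\psi(\mu^{*})=0$ are exactly Beck's argument. A minor point in your favor: you make explicit the simplex constraint $\sum_{i}\lambda_{i}=1$, without which the minimization in Lemma \ref{lem4.2} as stated in the paper would be degenerate.
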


\par Note that for $a\in\mathbb{R}$, if $a>0$, $\left[a\right]_{+}=a$; if $a\leq 0$, $\left[a\right]_{+}=0$ in Lemma \ref{lem4.3}. We define the half variation function based on Lemma \ref{lem4.2} and Lemma \ref{lem4.3}.

\begin{definition}\label{def4.4}
    For $x\in X$ and $\alpha>0$, the function $\mathbb{H}_{\alpha}$ is defined as follows
\begin{align}\label{equ4.5}
    \mathbb{H}_{\alpha}(x)=
    \left\{             
             \begin{array}{ll}
             \sum_{i=1}^{n}\frac{\lambda_{i}}{2\alpha+\lambda_{i}}\left<x, e_{i}\right>e_{i}, & if \ x\neq 0, \\
             0, & if \ x= 0. \\
             \end{array}
    \right.
\end{align}
where $e_{i}=(\underbrace{0,\cdots,0,1}_{i},0,\cdots)$ and $\lambda_{i}$ is defined in Lemma \ref{lem4.3}. Thus, $\mathbb{H}_{\alpha}$ is referred to as a half variation function mapping $X$ to $X$.
\end{definition}

\par Since $\mathcal{R}_{\eta}\left(x\right)=\left\|x\right\|_{\ell_{1}}^{2}- \eta\left\|x\right\|_{\ell_{2}}^{2}$, where $0<\eta\leq 1$, is non-convex, the proximal gradient method cannot be directly applied to problem \eqref{equ1.2}. We can rewrite $\mathcal{J}_{\alpha,\eta}^{\delta}\left(x\right)$ in \eqref{equ1.2} as 
\begin{equation}\label{equ4.6}
    \mathcal{J}_{\alpha,\eta}^{\delta}\left(x\right)=f\left(x\right)+g\left(x\right),
\end{equation}
where $f\left(x\right)=\frac{1}{2}\left\|F(x)-y^{\delta}\right\|_{Y}^{2} -\alpha\eta\left\|x\right\|_{\ell_{2}}^{2}$ and $g\left(x\right)=\alpha\left\|x\right\|_{\ell_{1}}^{2}$. It is evident that $g\left(x\right)$ is proper and convex. Since $f\left(x\right)$ is continuous, it is also proper and closed. Next, we verify the existence of optimal solution of problem \eqref{equ4.6}.

\par Thus, the minimization problem in \eqref{equ4.2} can be rewritten as
\begin{equation}\label{equ4.7}
    x^{k+1}=\arg\min_{x\in \ell_{2}}\left\{\frac{1}{2}\left\|x-x^{k}+t^{k}\left(F'\left(x^{k}\right)^{*} \left(F'\left(x^{k}\right)-y^{\delta}\right)-2\alpha\eta x^{k}\right)\right\|_{\ell_{2}}^{2}+ \alpha\left\|x\right\|_{\ell_{1}}^{2}\right\}.
\end{equation}
By Lemma \ref{lem4.2}, \eqref{equ4.7} is equivalent to the variational form
\begin{equation}\label{equ4.8}
    x^{k+1}=\arg\min_{x\in \ell_{2}}\left\{\frac{1}{2}\left\|x-x^{k}+t^{k}\left(F'\left(x^{k}\right)^{*} \left(F'\left(x^{k}\right)-y^{\delta}\right)-2\alpha\eta x^{k}\right)\right\|_{\ell_{2}}^{2}+ \alpha\sum_{i=1}^{n}\varphi\left(x_{i},\lambda_{i}\right)\right\}.
\end{equation}
where $\varphi$ is defined in \eqref{equ4.4} and $\lambda_{i}$ is specified in Lemma \ref{lem4.2}. According to Lemma \ref{lem4.3}, a minimizer of \eqref{equ4.8} can be explicitly computed by
\begin{equation*}
    x^{k+1}={\rm prox}_{t_{k}g}\left(x^{k}-t_{k}\left(F'\left(x^{k}\right)^{*} \left(F'\left(x^{k}\right)-y^{\delta}\right)-2\alpha\eta  x^{k}\right)\right), 
\end{equation*}
i.e.,
\begin{equation*}
    x^{k+1}=\mathbb{H}_{\alpha}\left(x^{k}-\frac{1}{L_{k}} \left(F'\left(x^{k}\right)^{*} \left(F'\left(x^{k}\right)-y^{\delta}\right)-2\alpha\eta  x^{k}\right)\right), 
\end{equation*}
where $t^{k}$ is the step-size at iteration $k$ and $L_{k}=\frac{1}{t_{k}}$.

\begin{lemma}\label{lem4.5}
    A minimizer of problem \eqref{equ4.8} is expressed as
    \begin{equation}\label{equ4.9}
        x^{k+1}=\mathbb{H}_{\alpha}\left(x^{k}+\frac{2\alpha\eta}{L_{k}} x^{k}-\frac{1}{L_{k}}F'\left(x^{k}\right)^{*} \left(F'\left(x^{k}\right)-y^{\delta}\right)\right).
    \end{equation}
\end{lemma}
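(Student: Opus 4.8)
The plan is to recognize that problem \eqref{equ4.8} is nothing more than the evaluation of the proximal operator of $\alpha\|\cdot\|_{\ell_1}^2$ at a specific point, and then to read off the closed form directly from Lemma \ref{lem4.3} together with Definition \ref{def4.4}. First I would abbreviate the shift
\begin{equation*}
    z^{k}:=x^{k}-\frac{1}{L_{k}}\left(F'\left(x^{k}\right)^{*}\left(F'\left(x^{k}\right)-y^{\delta}\right)-2\alpha\eta x^{k}\right),
\end{equation*}
where $t^{k}=1/L_{k}$, so that the quadratic term in \eqref{equ4.8} is already \emph{exactly} $\tfrac12\|x-z^{k}\|_{\ell_2}^2$ with no completing of the square required.

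Next I would invoke Lemma \ref{lem4.2}: the inner sum $\sum_{i}\varphi(x_i,\lambda_i)$, evaluated at the optimal $\lambda$ specified there, coincides with $\|x\|_{\ell_1}^2$. Hence minimizing \eqref{equ4.8} over $x$ is equivalent to minimizing \eqref{equ4.7}, that is,
\begin{equation*}
    x^{k+1}=\arg\min_{x\in\ell_2}\left\{\tfrac12\left\|x-z^{k}\right\|_{\ell_2}^2+\alpha\left\|x\right\|_{\ell_1}^2\right\}.
\end{equation*}
By the very definition of the proximal operator this is precisely $\mathrm{prox}_{\alpha g}(z^{k})$ with $g(x)=\|x\|_{\ell_1}^2$. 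Lemma \ref{lem4.3} then supplies the componentwise formula $\mathrm{prox}_{\alpha g}(z^{k})_i=\lambda_i z^{k}_i/(\lambda_i+2\alpha)$, and comparing this with Definition \ref{def4.4}, whose $i$-th component is $\tfrac{\lambda_i}{2\alpha+\lambda_i}\langle z^{k},e_i\rangle=\tfrac{\lambda_i z^{k}_i}{2\alpha+\lambda_i}$, shows that $\mathrm{prox}_{\alpha g}(z^{k})=\mathbb{H}_{\alpha}(z^{k})$. Substituting the expression for $z^{k}$ then yields exactly \eqref{equ4.9}.

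There is essentially no hard analytic obstacle here: the statement is a bookkeeping identity assembled from results already established. The only points demanding care are (i) the substitution $t^{k}=1/L_{k}$ and the correct sign when expanding the center $z^{k}$, and (ii) the elementary verification that $\langle z^{k},e_i\rangle=z^{k}_i$ makes the half-variation operator of Definition \ref{def4.4} agree component-for-component with the proximal formula of Lemma \ref{lem4.3}. I would also note that the coefficients $\lambda_i$ (through $\mu^{*}$) are themselves determined by the components of $z^{k}$, so the right-hand side of \eqref{equ4.9} is well defined, and the case distinction $z^{k}=0$ versus $z^{k}\neq0$ present in Definition \ref{def4.4} is inherited automatically from $\mathrm{prox}_{\alpha g}$.
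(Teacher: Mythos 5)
Your proposal is correct and follows essentially the same route as the paper: the paper's proof likewise rewrites \eqref{equ4.8} as $\arg\min_{x}\left\{\frac{1}{2}\left\|x-u\right\|^{2}+\alpha\sum_{i}\varphi\left(x_{i},\lambda_{i}\right)\right\}$ with $u=x^{k}-\frac{1}{L_{k}}\left(F'\left(x^{k}\right)^{*}\left(F'\left(x^{k}\right)-y^{\delta}\right)-2\alpha\eta x^{k}\right)$, performs the partial minimization in $x$ to get $x_{i}=\frac{\lambda_{i}u_{i}}{\lambda_{i}+2\alpha}$ (i.e., the content of Lemma \ref{lem4.3}, following Beck's Lemma 6.70), and identifies the result with $\mathbb{H}_{\alpha}\left(u\right)$ via Definition \ref{def4.4}. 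The only cosmetic difference is that you invoke Lemma \ref{lem4.3} as a packaged proximal formula while the paper redoes the inner minimization explicitly; the substance is identical.
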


\begin{proof}
    The proof closely follows that of \cite[Lemma 6.70]{B17}. Problem \eqref{equ4.8} can be reformulated as the problem
    \begin{equation}\label{equ4.10}
        x^{k+1}=\arg\min_{x\in \ell_{2}}\left\{\frac{1}{2}\left|x-u\right|^{2}+\alpha\sum_{i}\varphi(x_{i}, \lambda_{i})\right\},
    \end{equation}
    where $u=x^{k}-\frac{1}{L_{k}}\left(F'\left(x^{k}\right)^{*} \left(F'\left(x^{k}\right)-y^{\delta}\right)-2\alpha\eta x^{k}\right)$ for a fixed $x^{k}$. Minimizing first with respect to $x$, we obtain that $x_{i}=\frac{\lambda_{i}u_{i}}{\lambda_{i}+2\alpha}$ so that
    \begin{equation*}
        x^{k+1}_{i}=\sum_{i=1}^{n}\frac{\lambda_{i}}{2\alpha+\lambda_{i}}\left[x^{k}+\frac{2\alpha\eta}{L_{k}} x^{k}-\frac{1}{L_{k}}F'\left(x^{k}\right)^{*} \left(F'\left(x^{k}\right)-y^{\delta}\right)\right]_{i}.
    \end{equation*}
    and we can obtain \eqref{equ4.10}.
\end{proof}

\par In this paper, we propose a numerical algorithm based on the idea that the next iterate is obtained by solving the minimization problem \eqref{equ1.2} with proximal gradient method. We called it HV-$\left(\ell_{1}^{2}-\eta\ell_{2}^{2}\right)$ algorithm which is summarized in Algorithm \ref{alg2}.

\begin{algorithm}
\caption{HV-($\ell_{1}^{2}-\eta\ell_{2}^{2}$) algorithm for problem \eqref{equ1.2}}
\begin{algorithmic}\label{alg2}
\STATE{\textbf{Initialization}: Set $k=0$, choose $x_{0}\in \ell_{2}$ such that $\Phi\left(x^{0}\right)<+\infty$.}
\STATE{\textbf{General step}: for any $k=0,1,2,\cdots$ execute the following steps:}
\STATE{\qquad 1: pick $L_{k} =L \in \left(\frac{L_{f}}{2},\infty\right)$ for $L_{f}>0$;}
\STATE{\qquad 2: 
\begin{equation*}
    x^{k+1}=\mathbb{H}_{\alpha}\left(x^{k}+ \frac{2\alpha\eta}{L_{k}} x^{k}- \frac{1}{L_{k}}F'\left(x^{k}\right)^{*}\left(F\left(x^{k}\right)- y^{\delta}\right)\right);
\end{equation*}}
\STATE{\qquad 3: check the stopping criterion and return $x^{k+1}$ as a solution.}
\end{algorithmic}
\end{algorithm}

\subsection{Convergence analysis}

\par Since the gradient of $f\left(x\right)$ is $L_{f}$-Lipschits continuous, and $g\left(x\right)$ is proper convex, we obtain the following results directly, detailed proof refer to \cite[Lemma 5.7 and Theorem 6.39]{B17}.

\begin{lemma}\label{lem4.6}
    Let the gradient of $f:\ell_{2}\rightarrow (-\infty,\infty]$ be $L_{f}$-Lipschits continuous over a given convex set $\mathcal{C}$, i.e.
    \begin{equation}\label{equ4.11}
        \left\|\nabla f\left(x_{1}\right)-\nabla f\left(x_{2}\right)\right\|\leq L_{f}\left\|x_{1}-x_{2}\right\|
    \end{equation}
    for any $x_{1},x_{2}\in\mathcal{C}$, then we have
    \begin{equation}\label{equ4.12}
        f\left(x_{1}\right)\leq f\left(x_{2}\right)+\left< \nabla f\left(x_{2}\right),x_{1}-x_{2}\right>+ \frac{L_{f}}{2}\left\|x_{1}-x_{2}\right\|^{2}.
    \end{equation}
\end{lemma}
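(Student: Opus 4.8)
The plan is to establish \eqref{equ4.12} by the classical one-dimensional reduction underlying the descent lemma. Since $f$ possesses a gradient on $\mathcal{C}$ and $\mathcal{C}$ is convex, for fixed $x_{1},x_{2}\in\mathcal{C}$ the entire segment $x_{2}+t\left(x_{1}-x_{2}\right)$, $t\in[0,1]$, lies in $\mathcal{C}$, so I would introduce the scalar function $\phi\left(t\right):=f\left(x_{2}+t\left(x_{1}-x_{2}\right)\right)$. First I would record that $\phi$ is differentiable on $[0,1]$ with $\phi'\left(t\right)=\left<\nabla f\left(x_{2}+t\left(x_{1}-x_{2}\right)\right),x_{1}-x_{2}\right>$, which follows from the chain rule together with the differentiability of $f$.

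Next I would apply the fundamental theorem of calculus to write $f\left(x_{1}\right)-f\left(x_{2}\right)=\phi\left(1\right)-\phi\left(0\right)=\int_{0}^{1}\phi'\left(t\right)\ud t$, and then subtract the linear term $\left<\nabla f\left(x_{2}\right),x_{1}-x_{2}\right>$ to obtain the integral representation
\begin{equation*}
    f\left(x_{1}\right)-f\left(x_{2}\right)-\left<\nabla f\left(x_{2}\right),x_{1}-x_{2}\right>=\int_{0}^{1}\left<\nabla f\left(x_{2}+t\left(x_{1}-x_{2}\right)\right)-\nabla f\left(x_{2}\right),x_{1}-x_{2}\right>\ud t.
\end{equation*}

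The decisive step is to bound the integrand. By the Cauchy--Schwarz inequality and then the Lipschitz hypothesis \eqref{equ4.11}, the integrand is at most $\left\|\nabla f\left(x_{2}+t\left(x_{1}-x_{2}\right)\right)-\nabla f\left(x_{2}\right)\right\|\left\|x_{1}-x_{2}\right\|\leq L_{f}\,t\left\|x_{1}-x_{2}\right\|^{2}$, where I used $\left\|\left(x_{2}+t\left(x_{1}-x_{2}\right)\right)-x_{2}\right\|=t\left\|x_{1}-x_{2}\right\|$. Integrating this bound against $\int_{0}^{1}t\,\ud t=\frac{1}{2}$ then produces exactly $\frac{L_{f}}{2}\left\|x_{1}-x_{2}\right\|^{2}$, which is \eqref{equ4.12}.

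The only genuine obstacle is making the calculus rigorous in the Hilbert-space setting: I must confirm that $\phi$ is continuously differentiable (so that the fundamental theorem of calculus applies) and that $t\mapsto\nabla f\left(x_{2}+t\left(x_{1}-x_{2}\right)\right)$ is integrable on $[0,1]$. Both are consequences of the assumed differentiability of $f$ together with the continuity of $\nabla f$ forced by \eqref{equ4.11}. Since the paper cites \cite[Lemma 5.7]{B17} for precisely this statement, I would invoke that these regularity prerequisites hold on the convex set $\mathcal{C}$ and then carry out the integral estimate above to conclude.
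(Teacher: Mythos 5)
Your proof is correct and coincides with the paper's treatment: the paper proves this lemma simply by citing \cite[Lemma 5.7]{B17}, and the argument you give (restricting to the segment, applying the fundamental theorem of calculus, and bounding the integrand via Cauchy--Schwarz and the Lipschitz hypothesis) is precisely the standard proof of that cited descent lemma. No gaps; the regularity points you flag are indeed settled by the continuity of $\nabla f$ implied by \eqref{equ4.11}.
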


\begin{lemma}\label{lem4.9}
    Let $g: \ell_{2} \rightarrow (-\infty,\infty]$ be proper closed and convex. If $u_{1}={\rm prox}_{g}\left(x_{1}\right)$ for any $x_{1},u_{1}\in X$, then we have
    \begin{equation}\label{equ4.14}
        \left<x_{1}-u_{1}, x_{2}-u_{1}\right>\leq g\left(x\right)- g\left(u\right)
    \end{equation}
    for any $x\in X$.
\end{lemma}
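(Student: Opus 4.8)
The plan is to read the assertion off from the first-order optimality condition for the strongly convex problem that defines the proximal operator; this is the so-called second prox theorem, cf.\ \cite[Theorem 6.39]{B17}. First I would recall that, by definition, $u_{1}={\rm prox}_{g}\left(x_{1}\right)$ is the minimizer over $X$ of the functional
\begin{equation*}
    h\left(u\right):=g\left(u\right)+\frac{1}{2}\left\|u-x_{1}\right\|^{2}.
\end{equation*}
Since $g$ is proper, closed and convex and the quadratic term is everywhere differentiable, $h$ is proper, closed and strongly convex, so its unique minimizer $u_{1}$ is characterized by Fermat's rule $0\in\partial h\left(u_{1}\right)$.

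The second step is to compute $\partial h$. Because the map $u\mapsto\frac{1}{2}\left\|u-x_{1}\right\|^{2}$ is Fr\'{e}chet-differentiable with gradient $u-x_{1}$, the subdifferential sum rule applies without any additional constraint qualification and yields $\partial h\left(u_{1}\right)=\partial g\left(u_{1}\right)+\left(u_{1}-x_{1}\right)$. Consequently, Fermat's rule $0\in\partial h\left(u_{1}\right)$ is equivalent to the inclusion $x_{1}-u_{1}\in\partial g\left(u_{1}\right)$.

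Finally I would unfold the definition of the subdifferential of the convex function $g$. The inclusion $x_{1}-u_{1}\in\partial g\left(u_{1}\right)$ means precisely that
\begin{equation*}
    g\left(x\right)\geq g\left(u_{1}\right)+\left<x_{1}-u_{1}, x-u_{1}\right>
\end{equation*}
holds for every $x\in X$, which is exactly the claimed inequality \eqref{equ4.14} after rearrangement.

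I do not expect a genuine obstacle: the argument is standard and short. The only point that deserves a word of care is the use of the subdifferential sum rule, and this is unproblematic here precisely because one of the two summands is smooth, so no qualification beyond differentiability is needed. I would also note that the inequality, as derived, holds for every $x\in X$, so the symbols $x_{2}$ and $u$ on the right-hand side of the stated conclusion should be read as $x$ and $u_{1}$, respectively.
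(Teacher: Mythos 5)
Your proof is correct and coincides with the argument the paper relies on: the paper does not prove this lemma itself but cites \cite[Theorem 6.39]{B17} (the second prox theorem), whose proof is exactly your chain of Fermat's rule, the sum rule for the smooth quadratic term, and the subgradient inequality, and indeed the paper's own Lemma \ref{lem4.11} invokes precisely the intermediate characterization $x_{1}-u_{1}\in\partial g\left(u_{1}\right)$ that you derive. You are also right that the statement's $x_{2}$ and $u$ are typos for $x$ and $u_{1}$.
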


\par To reduce notational redundancy and ensure the rigor of subsequent derivations, we set 
\begin{equation*} 
    T_{L}^{g}\left(x\right)={\rm prox}_{\frac{1}{L}g}\left(x-\frac{1}{L}\nabla f\left(x\right)\right)={\rm prox}_{\frac{1}{L}g}\left(x- \frac{1}{L}F'\left(x\right)^{*}\left(F\left(x\right)- y^{\delta}\right)+ \frac{2\alpha\eta}{L} x\right)
\end{equation*}
and 
\begin{equation*} 
    G_{L}^{g}\left(x\right)=L\left(x-T_{L}^{g}\left(x\right)\right)
\end{equation*}
for a constant $L\in \left(\frac{L_{f}}{2},\infty\right)$ in the remaining part of the proof. 

\begin{lemma}\label{lem4.10}
    Suppose that $f$ and $g$ satisfy the conditions in Assumption \ref{ass4.1}. Then for any $x\in {\rm int}\left(f\right)$ and $L\in \left(\frac{L_{f}}{2},\infty\right)$, the following inequality holds
    \begin{equation}\label{equ4.15}
        \mathcal{J}_{\alpha,\eta}^{\delta}\left(x\right)- \mathcal{J}_{\alpha,\eta}^{\delta}\left(T_{L}^{g}\left(x\right)\right) \geq \frac{L-L_{f}/2}{L^{2}}\left\|G_{L}^{g}\left(x\right)\right\|^{2}.
    \end{equation}
\end{lemma}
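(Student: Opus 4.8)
The plan is to run the standard sufficient-decrease argument for the proximal gradient step, splicing together the descent inequality for the smooth part $f$ furnished by Lemma~\ref{lem4.6} with the prox inequality for the convex part $g$ furnished by Lemma~\ref{lem4.9}. Throughout I abbreviate $x^{+}:=T_{L}^{g}\left(x\right)$ and use the identities that tie everything to the gradient mapping, namely $G_{L}^{g}\left(x\right)=L\left(x-x^{+}\right)$, and hence $x^{+}-x=-\frac{1}{L}G_{L}^{g}\left(x\right)$ and $\left\|x^{+}-x\right\|^{2}=\frac{1}{L^{2}}\left\|G_{L}^{g}\left(x\right)\right\|^{2}$. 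The goal is to lower-bound $f\left(x\right)-f\left(x^{+}\right)$ and $g\left(x\right)-g\left(x^{+}\right)$ separately and then add them so that the cross terms cancel.

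First I would apply Lemma~\ref{lem4.6} with $x_{1}=x^{+}$ and $x_{2}=x$, valid because Assumption~\ref{ass4.1} gives ${\rm dom}\left(g\right)\subset{\rm int}\left({\rm dom}\left(f\right)\right)$, so that both $x$ and $x^{+}$ lie in the region where $\nabla f$ is $L_{f}$-Lipschitz. Substituting the two identities above turns the descent bound into
\[
    f\left(x\right)-f\left(x^{+}\right)\geq \frac{1}{L}\left<\nabla f\left(x\right),G_{L}^{g}\left(x\right)\right>-\frac{L_{f}}{2L^{2}}\left\|G_{L}^{g}\left(x\right)\right\|^{2}.
\]
Second, since by definition $x^{+}={\rm prox}_{\frac{1}{L}g}\left(x-\frac{1}{L}\nabla f\left(x\right)\right)$, I would invoke Lemma~\ref{lem4.9} for the map ${\rm prox}_{\frac{1}{L}g}$ at the point $x-\frac{1}{L}\nabla f\left(x\right)$ with test vector $x$; multiplying the resulting inequality through by $L$ and again replacing $x-x^{+}$ by $\frac{1}{L}G_{L}^{g}\left(x\right)$ yields
\[
    g\left(x\right)-g\left(x^{+}\right)\geq \frac{1}{L}\left\|G_{L}^{g}\left(x\right)\right\|^{2}-\frac{1}{L}\left<\nabla f\left(x\right),G_{L}^{g}\left(x\right)\right>.
\]

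Adding the two bounds produces $\mathcal{J}_{\alpha,\eta}^{\delta}\left(x\right)-\mathcal{J}_{\alpha,\eta}^{\delta}\left(x^{+}\right)$ on the left, while on the right the terms $\pm\frac{1}{L}\left<\nabla f\left(x\right),G_{L}^{g}\left(x\right)\right>$ cancel exactly, leaving $\left(\frac{1}{L}-\frac{L_{f}}{2L^{2}}\right)\left\|G_{L}^{g}\left(x\right)\right\|^{2}=\frac{L-L_{f}/2}{L^{2}}\left\|G_{L}^{g}\left(x\right)\right\|^{2}$, which is precisely \eqref{equ4.15}. The step I expect to require the most care is the second one: Lemma~\ref{lem4.9} must be used for $\frac{1}{L}g$ rather than for $g$, so its right-hand side carries the factor $\frac{1}{L}\left(g\left(x\right)-g\left(x^{+}\right)\right)$, and one must track the $\frac{1}{L}\nabla f\left(x\right)$ hidden inside the prox argument together with the rescaling $x-x^{+}=\frac{1}{L}G_{L}^{g}\left(x\right)$. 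A sign or scaling slip there would destroy the exact cancellation of the inner-product terms, so getting those bookkeeping factors right is the crux; everything else is the mechanical substitution of the two identities for $G_{L}^{g}\left(x\right)$.
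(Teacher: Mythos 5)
Your proposal is correct and follows essentially the same route as the paper's proof: the descent inequality of Lemma \ref{lem4.6} for $f$, the prox inequality of Lemma \ref{lem4.9} applied to $\tfrac{1}{L}g$ at the point $x-\tfrac{1}{L}\nabla f\left(x\right)$, and the exact cancellation of the $\left<\nabla f\left(x\right),G_{L}^{g}\left(x\right)\right>$ terms; the only difference is cosmetic, in that you lower-bound the $f$- and $g$-decreases separately and add, while the paper substitutes the gradient inner-product bound directly into the descent inequality before rewriting $\left\|T_{L}^{g}\left(x\right)-x\right\|^{2}$ as $\tfrac{1}{L^{2}}\left\|G_{L}^{g}\left(x\right)\right\|^{2}$.
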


\begin{proof}
    By Lemma \ref{lem4.6}, we have
    \begin{equation}\label{equ4.16}
        f\left(T_{L}^{g}\left(x\right)\right)\leq f\left(x\right)+\left< \nabla f\left(x\right),T_{L}^{g}\left(x\right)-x\right>+ \frac{L_{f}}{2}\left\|T_{L}^{g}\left(x\right)-x\right\|^{2}.
    \end{equation}
    Furthermore, by Lemma \ref{lem4.9}, we obtain
    \begin{equation*}
        \left<x-\frac{1}{L}\nabla f\left(x\right)-T_{L}^{g}\left(x\right), x-T_{L}^{g}\left(x\right)\right>\leq \frac{1}{L}g\left(x\right)- \frac{1}{L}g\left(T_{L}^{g}\left(x\right)\right).
    \end{equation*}
    It follows that
    \begin{equation*}
        \left<\nabla f\left(x\right), T_{L}^{g}\left(x\right)-x\right>\leq g\left(x\right)-g\left(T_{L}^{g}\left(x\right)\right) -L\left\|T_{L}^{g}\left(x\right)-x\right\|^{2}.
    \end{equation*}
    Combining with \eqref{equ4.16} yields
    \begin{equation*}
        f\left(T_{L}^{g}\left(x\right)\right)+g\left(T_{L}^{g}\left(x\right)\right)\leq f\left(x\right)+g\left(x\right)- \left(L-\frac{L_{f}}{2}\right)\left\|T_{L}^{g}\left(x\right)-x\right\|^{2}.
    \end{equation*}
    By taking into account the definition of $G_{L}^{g}\left(x\right)$, $\mathcal{J}_{\alpha,\eta}^{\delta}\left(x\right)=f\left(x\right)+g\left(x\right)$ and $\mathcal{J}_{\alpha,\eta}^{\delta}\left(T_{L}^{g}\left(x\right)\right)= f\left(T_{L}^{g}\left(x\right)\right)+g\left(T_{L}^{g}\left(x\right)\right)$, \eqref{equ4.15} can be obtained.
\end{proof}

\begin{lemma}\label{lem4.11}
    Let $f$ and $g$ satisfy the conditions in Assumption \ref{ass4.1} and $L>0$. If $g_{0}\left(x\right)\equiv 0$, then $G_{L}^{g_{0}}\left(x\right)=\nabla f\left(x\right)$ for any $x\in {\rm int}\left({\rm dom}\left(f\right)\right)$. Furthermore, for $x^{*}\in {\rm int}\left({\rm dom}\left(f\right)\right)$, it holds that $G_{L}^{g}\left(x^{*}\right)=0$ if and only if $x^{*}$ is a stationary point of problem \eqref{equ4.1}.
\end{lemma}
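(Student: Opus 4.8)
The plan is to treat the two assertions separately: the first is a one-line computation, while the second is an equivalence that I would reduce to the proximal characterization recorded in Lemma \ref{lem4.9}.

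For the first assertion, I would observe that when $g_{0}\equiv 0$ the rescaled function $\frac{1}{L}g_{0}$ also vanishes identically, so its proximal operator is the identity map, i.e. $\mathrm{prox}_{\frac{1}{L}g_{0}}(v)=v$ for every $v$. Applying this with $v=x-\frac{1}{L}\nabla f(x)$ gives $T_{L}^{g_{0}}(x)=x-\frac{1}{L}\nabla f(x)$, and substituting into the definition $G_{L}^{g_{0}}(x)=L\left(x-T_{L}^{g_{0}}(x)\right)$ immediately yields $G_{L}^{g_{0}}(x)=\nabla f(x)$ for every $x\in\mathrm{int}(\mathrm{dom}(f))$.

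For the second assertion, the starting observation is that, since $L>0$, one has $G_{L}^{g}(x^{*})=0$ if and only if $x^{*}=T_{L}^{g}(x^{*})$, that is, $x^{*}$ is a fixed point of the prox-gradient map $x^{*}=\mathrm{prox}_{\frac{1}{L}g}\!\left(x^{*}-\frac{1}{L}\nabla f(x^{*})\right)$. To convert this fixed-point equation into the stationarity condition $0\in\nabla f(x^{*})+\partial g(x^{*})$, I would invoke Lemma \ref{lem4.9} applied to the proper closed convex function $\frac{1}{L}g$, taking $x_{1}=x^{*}-\frac{1}{L}\nabla f(x^{*})$ and $u_{1}=x^{*}$. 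Since $x_{1}-u_{1}=-\frac{1}{L}\nabla f(x^{*})$, the inequality produced is $\left\langle -\frac{1}{L}\nabla f(x^{*}),\,x-x^{*}\right\rangle\le \frac{1}{L}\left(g(x)-g(x^{*})\right)$ for all $x$; multiplying through by $L$ gives $g(x)\ge g(x^{*})+\left\langle -\nabla f(x^{*}),\,x-x^{*}\right\rangle$ for all $x$, which is exactly the subgradient inclusion $-\nabla f(x^{*})\in\partial g(x^{*})$, equivalently $0\in\nabla f(x^{*})+\partial g(x^{*})$.

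The converse direction is where I expect the main obstacle, because Lemma \ref{lem4.9} as stated supplies only the forward implication. Assuming $-\nabla f(x^{*})\in\partial g(x^{*})$, I would recover the fixed-point property from the optimality and uniqueness structure of the proximal step: $\mathrm{prox}_{\frac{1}{L}g}(v)$ is the unique minimizer of the strictly convex map $u\mapsto \frac{1}{2}\|u-v\|^{2}+\frac{1}{L}g(u)$, whose first-order condition reads $v-u\in\frac{1}{L}\partial g(u)$. With $v=x^{*}-\frac{1}{L}\nabla f(x^{*})$ and $u=x^{*}$ this becomes $-\frac{1}{L}\nabla f(x^{*})\in\frac{1}{L}\partial g(x^{*})$, which holds by hypothesis; hence $x^{*}$ is the minimizer, so $x^{*}=T_{L}^{g}(x^{*})$ and $G_{L}^{g}(x^{*})=0$. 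The two points I would be most careful about are therefore this converse step, which must be supplemented by the strict convexity of the prox objective to guarantee uniqueness of the minimizer, and fixing at the outset that a \emph{stationary point} of \eqref{equ4.1} means precisely a point satisfying $0\in\nabla f(x^{*})+\partial g(x^{*})$, so that the chain of equivalences lands exactly on the intended notion.
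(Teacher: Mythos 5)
Your proof is correct and takes essentially the same approach as the paper: the identity $\mathrm{prox}_{\frac{1}{L}g_{0}}=\mathrm{id}$ for the first claim, and the equivalence between the fixed-point equation $x^{*}=\mathrm{prox}_{\frac{1}{L}g}\left(x^{*}-\frac{1}{L}\nabla f\left(x^{*}\right)\right)$ and the inclusion $-\nabla f\left(x^{*}\right)\in\partial g\left(x^{*}\right)$ for the second. The only difference is one of detail, not of route: the paper obtains this equivalence in a single step by citing the proof of Lemma \ref{lem4.9} (the second prox theorem), whereas you prove the two implications separately, deriving the forward one from the stated inequality of Lemma \ref{lem4.9} and the converse from the first-order optimality condition and strong convexity of the prox objective, which soundly fills in what the paper delegates to the reference.
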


\begin{proof}
    Since ${\rm prox}_{\frac{1}{L}g_{0}}\left(x\right)=x$ for all $x\in\ell_{2}$, it follows that
    \begin{align*}
        G_{L}^{g_{0}}\left(x\right)
        & = L\left(x-T_{L}^{g_{0}}\left(x\right)\right)=  L\left(x-{\rm prox}_{\frac{1}{L}g}\left(x-\frac{1}{L}\nabla f\left(x\right)\right)\right) \\
        & = L\left(x-\left(x-\frac{1}{L}\nabla f\left(x\right)\right)\right) = \nabla f\left(x\right).
    \end{align*}
     Furthermore, $G_{L}^{g}\left(x^{*}\right)=0$ if and only if 
    \begin{equation}\label{equ4.17}
        x^{*}={\rm prox}_{\frac{1}{L}g}\left(x^{*}-\frac{1}{L}\nabla f\left(x^{*}\right)\right)
    \end{equation}
    By the proof of Lemma \ref{lem4.9}, \eqref{equ4.17} holds if and only if
    \begin{equation*}
        x^{*}-\frac{1}{L}\nabla f\left(x^{*}\right)-x^{*}\in \frac{1}{L}\partial g\left(x^{*}\right),
    \end{equation*}
    i.e.,
    \begin{equation*}
        -\nabla f\left(x^{*}\right)\in \partial g\left(x^{*}\right),
    \end{equation*}
    which is the condition for stationary.
\end{proof}

\par Next, we introduce the non-expansiveness of the proximal operator, see \cite[Theorem 6.42]{B17} for details.

\begin{lemma}\label{lem4.12}
    Let $g$ be a proper closed and convex function. Then for any $x_{1},x_{2}\in \ell_{2}$, we have 
    \begin{equation*}
        \left\|{\rm prox}_{g}\left(x_{1}\right)-{\rm prox}_{g}\left(x_{2}\right)\right\| \leq \left\|x_{1}-x_{2}\right\|.
    \end{equation*}
\end{lemma}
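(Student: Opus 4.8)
The plan is to invoke the variational characterization of the proximal operator recorded in Lemma \ref{lem4.9} and exploit the convexity it encodes to derive the stronger \emph{firm} nonexpansiveness estimate, from which the stated bound follows immediately by Cauchy--Schwarz. To this end, abbreviate $u_{1}={\rm prox}_{g}\left(x_{1}\right)$ and $u_{2}={\rm prox}_{g}\left(x_{2}\right)$, so that both lie in ${\rm dom}\left(g\right)$.

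First I would apply Lemma \ref{lem4.9} to $u_{1}={\rm prox}_{g}\left(x_{1}\right)$ with the test point $x=u_{2}$, which yields $\left<x_{1}-u_{1}, u_{2}-u_{1}\right>\leq g\left(u_{2}\right)-g\left(u_{1}\right)$. Applying the same lemma symmetrically to $u_{2}={\rm prox}_{g}\left(x_{2}\right)$ with the test point $x=u_{1}$ gives $\left<x_{2}-u_{2}, u_{1}-u_{2}\right>\leq g\left(u_{1}\right)-g\left(u_{2}\right)$. The key step is to add these two inequalities: the $g$-terms on the right cancel exactly, and after writing $u_{2}-u_{1}=-\left(u_{1}-u_{2}\right)$ and collecting the inner products over the common vector $u_{1}-u_{2}$, one obtains
\begin{equation*}
    \left<\left(u_{1}-u_{2}\right)-\left(x_{1}-x_{2}\right),\, u_{1}-u_{2}\right>\leq 0,
\end{equation*}
which rearranges into the firm nonexpansiveness bound $\left\|u_{1}-u_{2}\right\|^{2}\leq\left<x_{1}-x_{2},\, u_{1}-u_{2}\right>$.

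Finally, bounding the right-hand side by the Cauchy--Schwarz inequality gives $\left\|u_{1}-u_{2}\right\|^{2}\leq\left\|x_{1}-x_{2}\right\|\left\|u_{1}-u_{2}\right\|$; dividing through by $\left\|u_{1}-u_{2}\right\|$ (the degenerate case $u_{1}=u_{2}$ being trivial) delivers the claim. I do not anticipate any genuine obstacle here, since all the convexity and minimality content is already packaged into Lemma \ref{lem4.9}; the only point requiring care is the sign bookkeeping when adding the two inequalities and regrouping them over $u_{1}-u_{2}$, so that the cross terms combine into the firm nonexpansiveness inequality rather than a weaker or reversed estimate.
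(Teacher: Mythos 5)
Your proof is correct: the paper itself gives no argument for this lemma, deferring instead to \cite[Theorem 6.42]{B17}, and your derivation of firm nonexpansiveness from Lemma \ref{lem4.9} (applied twice with the test points swapped, then summed and combined with Cauchy--Schwarz) is precisely the standard proof given in that reference. Note only that you correctly read through the typos in the paper's statement of Lemma \ref{lem4.9}, where the displayed inequality should read $\left<x_{1}-u_{1},\, x-u_{1}\right>\leq g\left(x\right)-g\left(u_{1}\right)$ for all $x$.
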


\begin{lemma}\label{lem4.13}
    Let $f$ and $g$ satisfy the conditions in Assumption \ref{ass4.1}. Then for any $x_{1},x_{2}\in {\rm int}\left(f\right)$, we have 
    \begin{equation*}
        \left\|G_{L}^{g}\left(x_{1}\right)-G_{L}^{g}\left(x_{2}\right)\right\| \leq \left(2L+L_{f}\right)\left\|x_{1}-x_{2}\right\|.
    \end{equation*}
\end{lemma}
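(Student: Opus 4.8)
The plan is to expand the difference $G_L^g(x_1)-G_L^g(x_2)$ directly from the definition $G_L^g(x)=L\left(x-T_L^g(x)\right)$, then control each piece with the two tools already at hand: the non-expansiveness of the proximal operator (Lemma \ref{lem4.12}) and the $L_f$-Lipschitz continuity of $\nabla f$ from Assumption \ref{ass4.1}. Concretely, I would write
\begin{equation*}
    G_L^g(x_1)-G_L^g(x_2)=L\left(x_1-x_2\right)-L\left(T_L^g(x_1)-T_L^g(x_2)\right),
\end{equation*}
and apply the triangle inequality to obtain the bound $\left\|G_L^g(x_1)-G_L^g(x_2)\right\|\leq L\left\|x_1-x_2\right\|+L\left\|T_L^g(x_1)-T_L^g(x_2)\right\|$.

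The second step is to estimate $\left\|T_L^g(x_1)-T_L^g(x_2)\right\|$. Recalling that $T_L^g(x)=\mathrm{prox}_{\frac{1}{L}g}\left(x-\frac{1}{L}\nabla f(x)\right)$, I would invoke Lemma \ref{lem4.12} to drop the proximal map without increasing the norm, giving
\begin{equation*}
    \left\|T_L^g(x_1)-T_L^g(x_2)\right\|\leq\left\|\left(x_1-x_2\right)-\frac{1}{L}\left(\nabla f(x_1)-\nabla f(x_2)\right)\right\|\leq\left\|x_1-x_2\right\|+\frac{1}{L}\left\|\nabla f(x_1)-\nabla f(x_2)\right\|.
\end{equation*}
Then the Lipschitz bound $\left\|\nabla f(x_1)-\nabla f(x_2)\right\|\leq L_f\left\|x_1-x_2\right\|$ yields $\left\|T_L^g(x_1)-T_L^g(x_2)\right\|\leq\left(1+\tfrac{L_f}{L}\right)\left\|x_1-x_2\right\|$.

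Finally I would substitute this into the bound from the first step: $L\left\|T_L^g(x_1)-T_L^g(x_2)\right\|\leq\left(L+L_f\right)\left\|x_1-x_2\right\|$, so that altogether $\left\|G_L^g(x_1)-G_L^g(x_2)\right\|\leq L\left\|x_1-x_2\right\|+\left(L+L_f\right)\left\|x_1-x_2\right\|=\left(2L+L_f\right)\left\|x_1-x_2\right\|$, which is the claim. I do not anticipate a genuine obstacle here: the argument is a routine composition of a $1$-Lipschitz map (the prox) with the affine forward-gradient map $x\mapsto x-\frac{1}{L}\nabla f(x)$, whose Lipschitz constant is at most $1+L_f/L$. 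The only point requiring a modicum of care is that both $x_1,x_2$ and the corresponding forward points $x_i-\frac{1}{L}\nabla f(x_i)$ lie where the relevant maps are defined, which is guaranteed by the hypothesis $x_1,x_2\in\mathrm{int}(f)$ together with the domain condition in Assumption \ref{ass4.1}.
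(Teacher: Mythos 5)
Your proposal is correct and follows essentially the same route as the paper's proof: split $G_L^g(x_1)-G_L^g(x_2)$ by the triangle inequality into the $L\left\|x_1-x_2\right\|$ part and the proximal part, bound the latter by the non-expansiveness of the proximal operator (Lemma \ref{lem4.12}) applied to the forward-gradient points, and finish with the $L_f$-Lipschitz continuity of $\nabla f$. The only difference is cosmetic — you isolate the estimate of $\left\|T_L^g(x_1)-T_L^g(x_2)\right\|$ as an intermediate step, whereas the paper runs the same chain of inequalities in a single display.
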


\begin{proof}
    By Lemma \ref{lem4.12}, for any $x_{1},x_{2}\in {\rm dom}\left(f\right)$,
    \begin{align*}
        G_{L}^{g}\left(x_{1}\right)-G_{L}^{g}\left(x_{2}\right)
        & = L\left\|x_{1}-{\rm prox}_{\frac{1}{L}g}\left(x_{1}-\frac{1}{L}\nabla f\left(x_{1}\right)\right) -x_{2}+{\rm prox}_{\frac{1}{L}g}\left(x_{2}- \frac{1}{L}\nabla f\left(x_{2}\right)\right)\right\| \\
        & \leq L\left\|x_{1}-x_{2}\right\|+L\left\|{\rm prox}_{\frac{1}{L}g}\left(x_{1}- \frac{1}{L}\nabla f\left(x_{1}\right)\right)-{\rm prox}_{\frac{1}{L}g}\left(x_{2}- \frac{1}{L}\nabla f\left(x_{2}\right)\right)\right\| \\
        & \leq L\left\|x_{1}-x_{2}\right\|+L\left\|\left(x_{1}- \frac{1}{L}\nabla f\left(x_{1}\right)\right)-\left(x_{2}-\frac{1}{L}\nabla f\left(x_{2}\right)\right)\right\| \\
        & \leq 2L\left\|x_{1}-x_{2}\right\|+\left\|\nabla f\left(x_{1}\right)-\nabla f\left(x_{2}\right)\right\| \\
        & \leq \left(2L+L_{f}\right)\left\|x_{1}-x_{2}\right\|.
    \end{align*}
    The lemma is proved.
\end{proof}

\begin{theorem}\label{the4.15}
     Suppose that Assumption \ref{ass4.1} holds and let $\left\{x^{k}\right\}$ be the sequence generated by Algorithm \ref{alg2} for solving problem \eqref{equ4.6} with a constant stepsize $L_{k}=L\in\left(\frac{L_{f}}{2},+\infty\right)$. Then the sequence $\left\{\mathcal{J}_{\alpha,\eta}^{\delta}\left(x^{k}\right)\right\}$ is nonincreasing, and
     \begin{equation*}
         \mathcal{J}_{\alpha,\eta}^{\delta}\left(x^{k+1}\right)< \mathcal{J}_{\alpha,\eta}^{\delta}\left(x^{k}\right)
     \end{equation*}
     if and only if $x^{k}$ is not a stationary point of \eqref{equ4.6}. Additionally, 
     \begin{equation*}
         \left\|x^{k+1}-x^{k}\right\|\rightarrow 0\quad {\rm as}\quad k\rightarrow\infty,
     \end{equation*}
     and all limit points of the sequence $\left\{x^{k}\right\}$ are stationary points of problem \eqref{equ4.6}.
\end{theorem}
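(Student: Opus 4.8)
The plan is to treat the sufficient-decrease estimate of Lemma~\ref{lem4.10} as the engine of the entire argument, after first recording the key algorithmic identity $x^{k+1}=T_{L}^{g}\left(x^{k}\right)$, so that $G_{L}^{g}\left(x^{k}\right)=L\left(x^{k}-x^{k+1}\right)$. Applying Lemma~\ref{lem4.10} with $x=x^{k}$ gives
\begin{equation*}
    \mathcal{J}_{\alpha,\eta}^{\delta}\left(x^{k}\right)- \mathcal{J}_{\alpha,\eta}^{\delta}\left(x^{k+1}\right)\geq \frac{L-L_{f}/2}{L^{2}}\left\|G_{L}^{g}\left(x^{k}\right)\right\|^{2}\geq 0,
\end{equation*}
where the coefficient $\left(L-L_{f}/2\right)/L^{2}$ is strictly positive because $L\in\left(L_{f}/2,\infty\right)$. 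This immediately yields that $\left\{\mathcal{J}_{\alpha,\eta}^{\delta}\left(x^{k}\right)\right\}$ is nonincreasing.

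For the strict-decrease characterization I would read off the same inequality: the gap $\mathcal{J}_{\alpha,\eta}^{\delta}\left(x^{k}\right)-\mathcal{J}_{\alpha,\eta}^{\delta}\left(x^{k+1}\right)$ is strictly positive exactly when $G_{L}^{g}\left(x^{k}\right)\neq 0$, since if $G_{L}^{g}\left(x^{k}\right)=0$ then $x^{k+1}=x^{k}$ and the two values coincide, whereas a nonzero $G_{L}^{g}\left(x^{k}\right)$ forces a positive gap through the strictly positive coefficient. Lemma~\ref{lem4.11} then converts $G_{L}^{g}\left(x^{k}\right)=0$ into the statement that $x^{k}$ is a stationary point of \eqref{equ4.6}, which gives the claimed equivalence.

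To establish $\left\|x^{k+1}-x^{k}\right\|\to 0$ I would first note that $\mathcal{J}_{\alpha,\eta}^{\delta}$ is bounded below: since $\left\|x\right\|_{\ell_{2}}\leq\left\|x\right\|_{\ell_{1}}$ and $\eta\leq 1$ we have $\mathcal{R}_{\eta}\left(x\right)=\left\|x\right\|_{\ell_{1}}^{2}-\eta\left\|x\right\|_{\ell_{2}}^{2}\geq\left(1-\eta\right)\left\|x\right\|_{\ell_{2}}^{2}\geq 0$, so $\mathcal{J}_{\alpha,\eta}^{\delta}\geq 0$. Telescoping the sufficient-decrease inequality over $k=0,\dots,N-1$ and discarding the nonnegative term $\mathcal{J}_{\alpha,\eta}^{\delta}\left(x^{N}\right)$ gives $\sum_{k=0}^{\infty}\left\|G_{L}^{g}\left(x^{k}\right)\right\|^{2}\leq\frac{L^{2}}{L-L_{f}/2}\mathcal{J}_{\alpha,\eta}^{\delta}\left(x^{0}\right)<\infty$, whence $\left\|G_{L}^{g}\left(x^{k}\right)\right\|\to 0$ and therefore $\left\|x^{k+1}-x^{k}\right\|=\left\|G_{L}^{g}\left(x^{k}\right)\right\|/L\to 0$.

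Finally, for a strong limit point $x^{*}$ with $x^{k_{j}}\to x^{*}$, the norm-Lipschitz bound of Lemma~\ref{lem4.13} gives continuity of $G_{L}^{g}$, so $G_{L}^{g}\left(x^{k_{j}}\right)\to G_{L}^{g}\left(x^{*}\right)$; combined with $\left\|G_{L}^{g}\left(x^{k}\right)\right\|\to 0$ this forces $G_{L}^{g}\left(x^{*}\right)=0$, and Lemma~\ref{lem4.11} identifies $x^{*}$ as a stationary point of \eqref{equ4.6}. I do not expect any single estimate to be hard, since the three preparatory lemmas do the heavy lifting; the main points of care are peculiar to the infinite-dimensional setting, namely confirming the lower bound on $\mathcal{J}_{\alpha,\eta}^{\delta}$ that makes the monotone sequence summable, and ensuring the limit-point passage genuinely uses strong convergence, which is precisely what the norm continuity in Lemma~\ref{lem4.13} (rather than mere weak sequential closedness of $F$) supplies.
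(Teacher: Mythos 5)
Your proposal is correct and follows essentially the same route as the paper's proof: Lemma \ref{lem4.10} supplies the sufficient decrease with $G_{L}^{g}\left(x^{k}\right)=L\left(x^{k}-x^{k+1}\right)$, Lemma \ref{lem4.11} gives the stationarity equivalence, monotonicity plus boundedness below yields $\left\|x^{k+1}-x^{k}\right\|\rightarrow 0$, and Lemma \ref{lem4.13} combined with Lemma \ref{lem4.11} handles the limit points. The only difference is one of explicitness: you verify boundedness below via $\mathcal{R}_{\eta}\left(x\right)\geq\left(1-\eta\right)\left\|x\right\|_{\ell_{2}}^{2}\geq 0$ and telescope the decrease inequality, whereas the paper simply asserts that the nonincreasing sequence is bounded below and converges — so your argument fills in a detail the paper leaves implicit.
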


\begin{proof}
    By Lemma \ref{lem4.10}, we set $x^{k}=x$ and 
    \begin{equation*}
        x^{k+1}=T_{L}^{g}\left(x^{k}\right)={\rm prox}_{\frac{1}{L}g}\left(x^{k}-\frac{1}{L}\nabla f\left(x^{k}\right)\right),
    \end{equation*}
    then 
    \begin{equation*}
        G_{L}^{g}\left(x^{k}\right)=L\left(x^{k}-x^{k+1}\right).
    \end{equation*}
    For any $k>0$, we obtain
    \begin{equation*}
        \mathcal{J}_{\alpha,\eta}^{\delta}\left(x^{k}\right)- \mathcal{J}_{\alpha,\eta}^{\delta}\left(x^{k+1}\right)\geq M\left\|x^{k}-x^{k+1}\right\|^{2},
    \end{equation*}
     where $M=L-L_{f}/2$, from which it readily follows that 
    \begin{equation*}
        \mathcal{J}_{\alpha,\eta}^{\delta}\left(x^{k}\right)\geq \mathcal{J}_{\alpha,\eta}^{\delta}\left(x^{k+1}\right).
    \end{equation*}
    If $x^{k}$ is not a stationary point of problem \eqref{equ4.6}, then
    \begin{equation*}
        \left\|x^{k}-x^{k+1}\right\|\neq 0\quad {\rm and}\quad \mathcal{J}_{\alpha,\eta}^{\delta}\left(x^{k+1}\right)< \mathcal{J}_{\alpha,\eta}^{\delta}\left(x^{k}\right).
    \end{equation*} 
    If $x^{k}$ is a stationary point of problem \eqref{equ4.6}, then 
    \begin{equation*}
        \left\|x^{k}-x^{k+1}\right\|= 0\quad {\rm and}\quad x^{k+1}=x^{k},
    \end{equation*} 
    i.e., 
    \begin{equation*}
        \mathcal{J}_{\alpha,\eta}^{\delta}\left(x^{k+1}\right)= \mathcal{J}_{\alpha,\eta}^{\delta}\left(x^{k}\right).
    \end{equation*}
    
    \par Additionally, since the sequence $\left\{\mathcal{J}_{\alpha,\eta}^{\delta}\left(x^{k}\right)\right\}$ is nonincreasing and bounded below, it converges. Thus, 
    \begin{equation*}
        \mathcal{J}_{\alpha,\eta}^{\delta}\left(x^{k}\right)- \mathcal{J}_{\alpha,\eta}^{\delta}\left(x^{k+1}\right)\rightarrow 0\quad {\rm as}\quad  k\rightarrow \infty,
    \end{equation*}
    which implies that 
    \begin{equation*}
        \left\|x^{k+1}-x^{k}\right\|\rightarrow 0\ {\rm as}\ k\rightarrow\infty.
    \end{equation*}
    Let $x^{*}$ be a limit point of $\left\{x^{k}\right\}$. Then there exists a subsequence $\left\{x^{k_{j}}\right\}$ of the sequence $\left\{x^{k}\right\}$ converging to the limit point $x^{*}$. By Lemma \ref{lem4.13}, for any $j\geq 0$,
    \begin{align*}
        \left\|G_{L}^{g}\left(x^{*}\right)\right\|
        & \leq \left\|G_{L}^{g}\left(x^{k_{j}}\right)-G_{L}^{g}\left(x^{*}\right)\right\|+ \left\|G_{L}^{g}\left(x^{k_{j}}\right)\right\| \\
        & \leq \left(2L+L_{f}\right)\left\|x^{k_{j}}-x^{*}\right\| +\left\|G_{L}^{g}\left(x^{k_{j}}\right)\right\|
    \end{align*}
    Since 
    \begin{equation*}
        \left\|x^{k_{j}}-x^{*}\right\|\rightarrow 0\quad {\rm as}\quad j\rightarrow\infty
    \end{equation*}
    and 
    \begin{equation*}
        \left\|G_{L}^{g}\left(x^{k_{j}}\right)\right\|=L\left\|x^{k_{j}}x^{k_{j+1}}\right\| \rightarrow 0\quad {\rm as}\quad j\rightarrow\infty,
    \end{equation*}
    it follows that 
    \begin{equation*}
        \left\|G_{L}^{g}\left(x^{*}\right)\right\|=0,
    \end{equation*}
    and Lemma \ref{lem4.11} implies that $x^{*}$ is a stationary point of problem \eqref{equ4.6}.
\end{proof}

\section{Numerical experiment}\label{sec5}


\par In this section, we are interested in the sparse recovery for a CS problem where the observed signal is measured with some nonlinear system \cite{B13, YWLJWJ15}. Fortunately, \cite{B13} shows that if the system satisfies some nonlinear conditions, then recovery should still be possible. Under the nonlinear CS frame, the measurement system is nonlinear. Assume that the observation model is 
\begin{equation}\label{equ5.1}
    y=F\left(x\right)+\delta,
\end{equation}
where $\delta\in\mathbb{R}^{m}$ is a noise level, $x\in\mathbb{R}^{n}$ and $F:\mathbb{R}^{m}\rightarrow\mathbb{R}^{n}$ is a nonlinear operator. It can be shown that if the linearization of $F$ at an exact solution $x^{\dagger}$ satisfies the restricted isometry property (RIP), then the convergence property of the IHTA is guaranteed in \cite{B13}. Next we illustrate the efficiency of the proposed algorithm by a nonlinear CS example of the form
\begin{equation}\label{equ5.2}
    y=F\left(x\right):=\hat{a}\left(A\hat{b}\left(x\right)\right),
\end{equation}
which was introduced in \cite{B13}, where $A$ is a CS matrix, $\hat{a}$ and $\hat{b}$ are nonlinear operators, respectively. Here, $\hat{a}$ encodes nonlinearity after mixing by $A$ as well as nonlinear "crosstalk" between mixed elements, and $\hat{b}$ encodes the same system properties for the inputs before mixing. For simplicity, we write $\hat{a}=x+a\left(x\right)$ and $\hat{b}=x+b\left(x\right)$, where again $a$ and $b$ are nonlinear maps. In particular, let $a\left(x\right)=x^{c}$ and $b\left(x\right)=x^{d}$, where $c,d\in\mathbb{N}^{+}$ and $x^{c}$ and $x^{d}$ should be understood in a componentwise sense.

\par Next, we consider the nonlinearity of the operator $F$. In \cite{B13}, it is shown that Jacobian matrix of $\hat{a}\left(A\hat{b}\left(x\right)\right)$ is of the form
\begin{equation*}
    F'\left(x\right)=\left(I+a'\right)\left[A\left(I+b'\right)\right]\left(x\right).
\end{equation*}
We assume that $\left\|x\right\|_{\ell_{2}}$ is bounded, then $a'$ is bounded on bounded sets. Then there exists a constant $c>0$ such that $\left\|f'\left(x_{1}\right)- f'\left(x_{2}\right)\right\|_{L\left(\mathbb{R}^{n},\mathbb{R}^{m}\right)}\leq c\left\|x_{1}-x_{2}\right\|_{\ell_{2}}$, i.e. $f'\left(x\right)$ is Lipschitz continuous, cf. \cite[Lamma 3 \& 4]{B13}.

\begin{figure}[htbp]
    \centering
    \subfigure[True signal]{\includegraphics[width=0.45\columnwidth,height=0.3\linewidth]{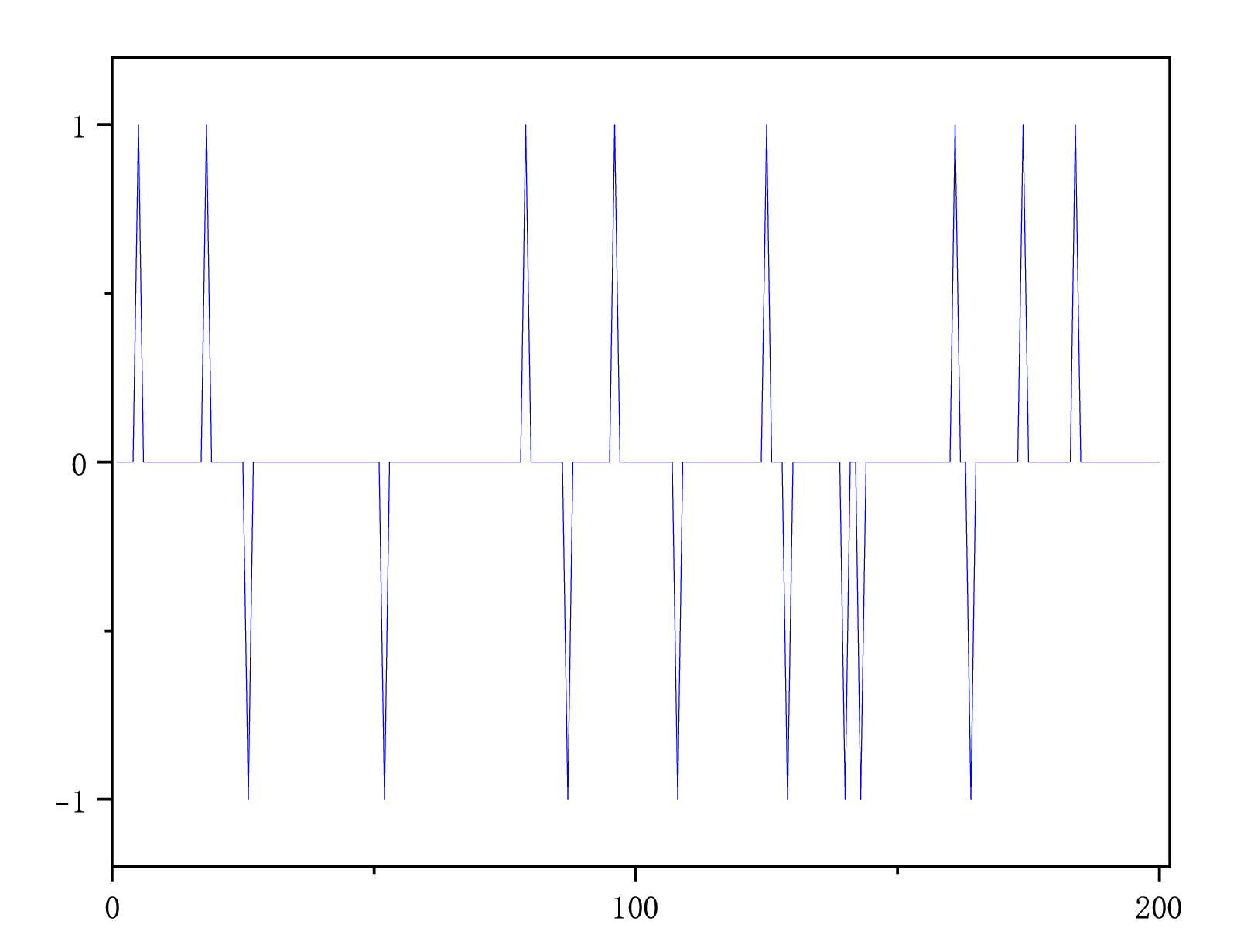}}
    \subfigure[Exact data and Observed data($\sigma$=30dB)]{\includegraphics[width=0.45\columnwidth,height=0.3\linewidth]{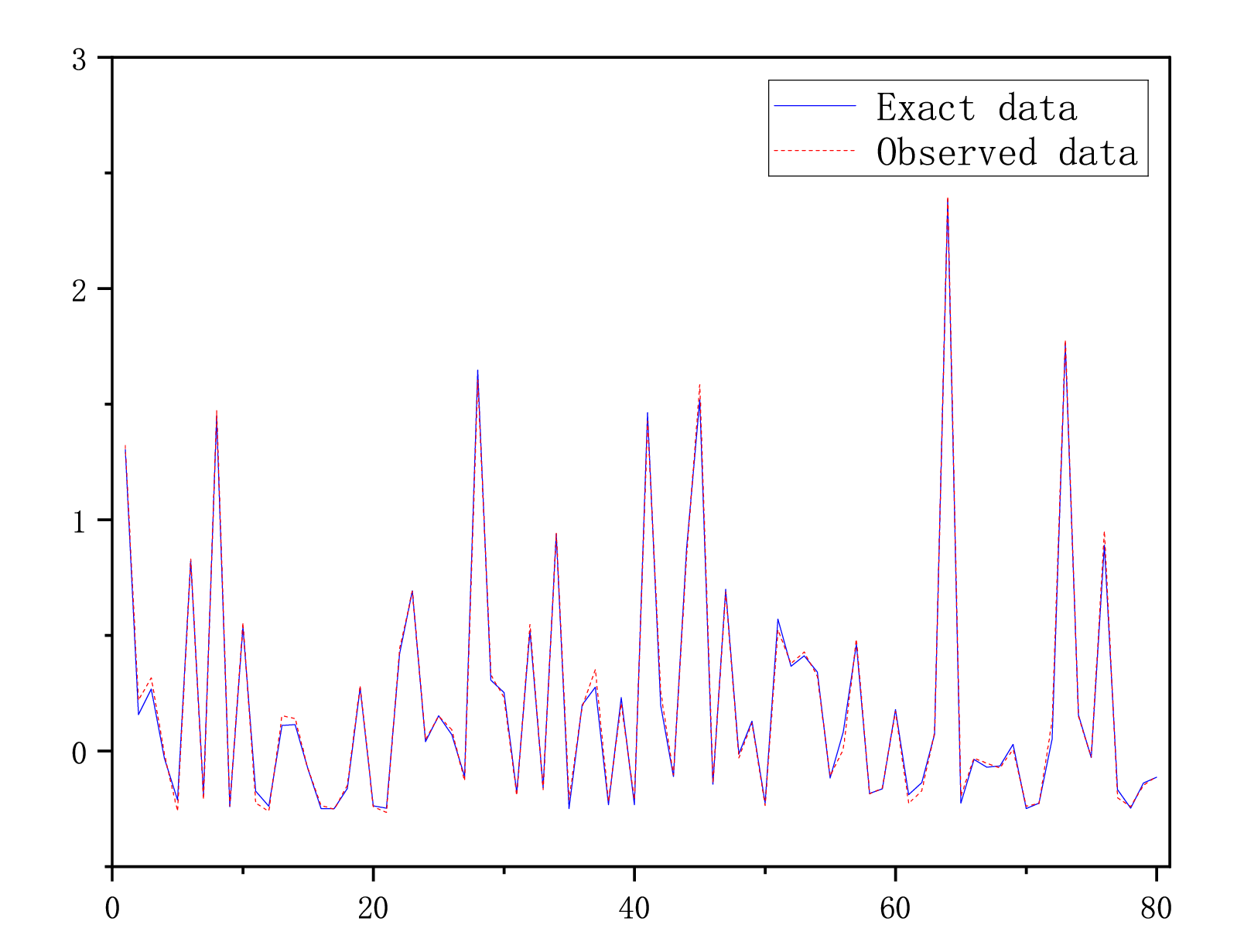}}\\
    \subfigure[HV-$\left(\ell_{1}^{2}-\eta\ell_{2}^{2}\right)$ algorithm with $\eta=0$, {\rm SNR} = 35.3088 ]{\includegraphics[width=0.45\columnwidth,height=0.3\linewidth]{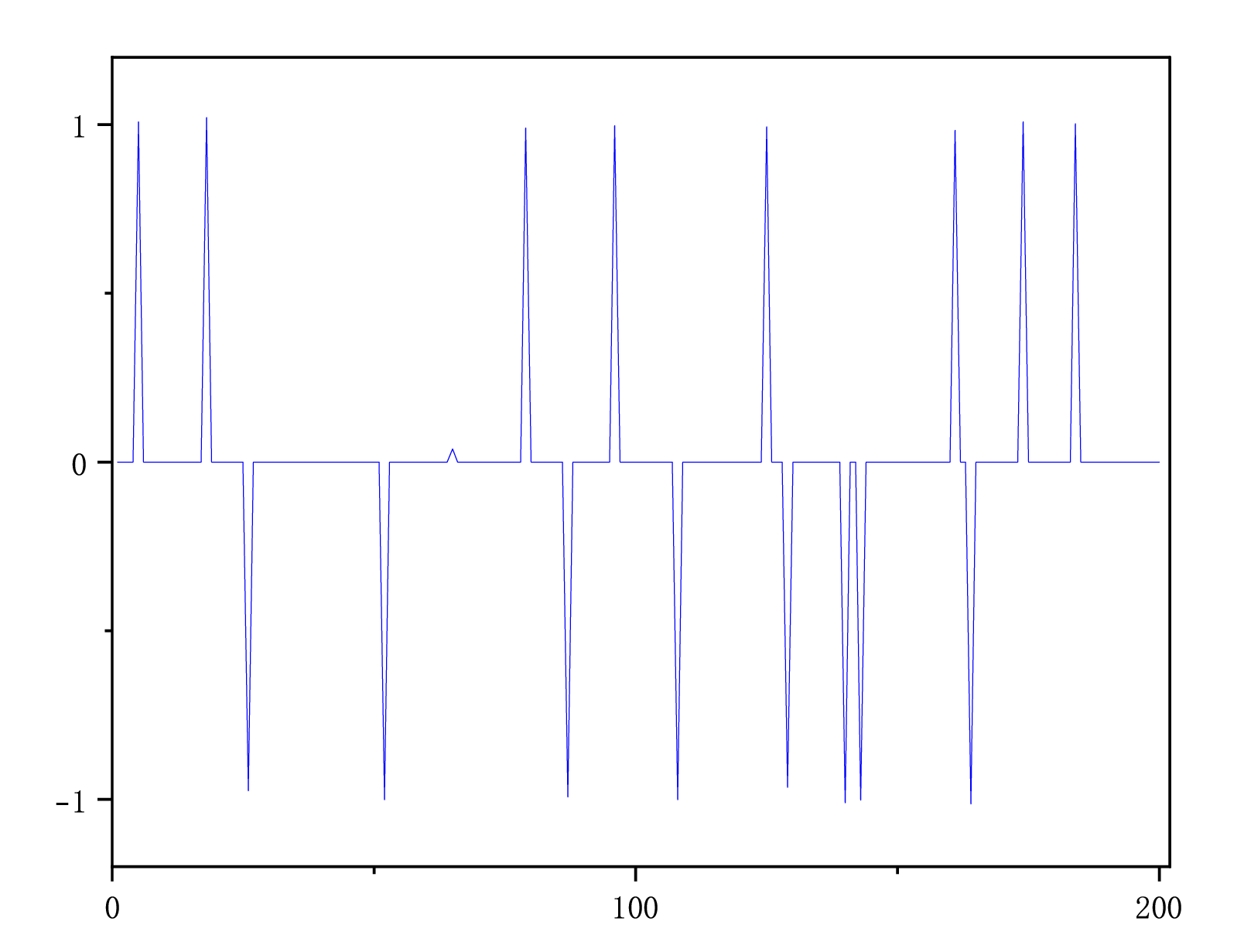}}
    \subfigure[HV-$\left(\ell_{1}^{2}-\eta\ell_{2}^{2}\right)$ algorithm with $\eta=0.4$, {\rm SNR} = 36.1417 ]{\includegraphics[width=0.45\columnwidth,height=0.3\linewidth]{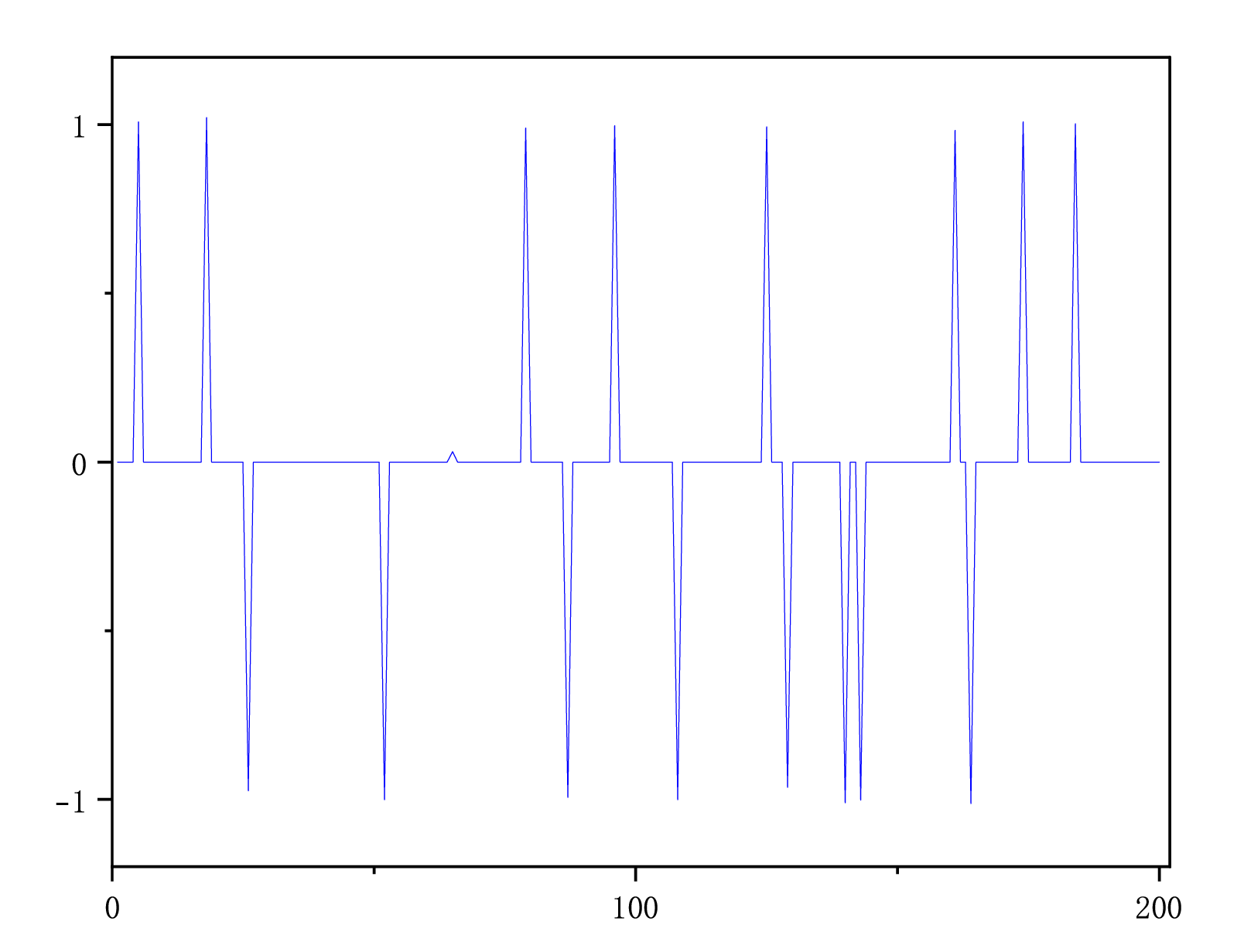}}\\
    \subfigure[HV-$\left(\ell_{1}^{2}-\eta\ell_{2}^{2}\right)$ algorithm with $\eta=0.6$, {\rm SNR} = 36.6058 ]{\includegraphics[width=0.45\columnwidth,height=0.3\linewidth]{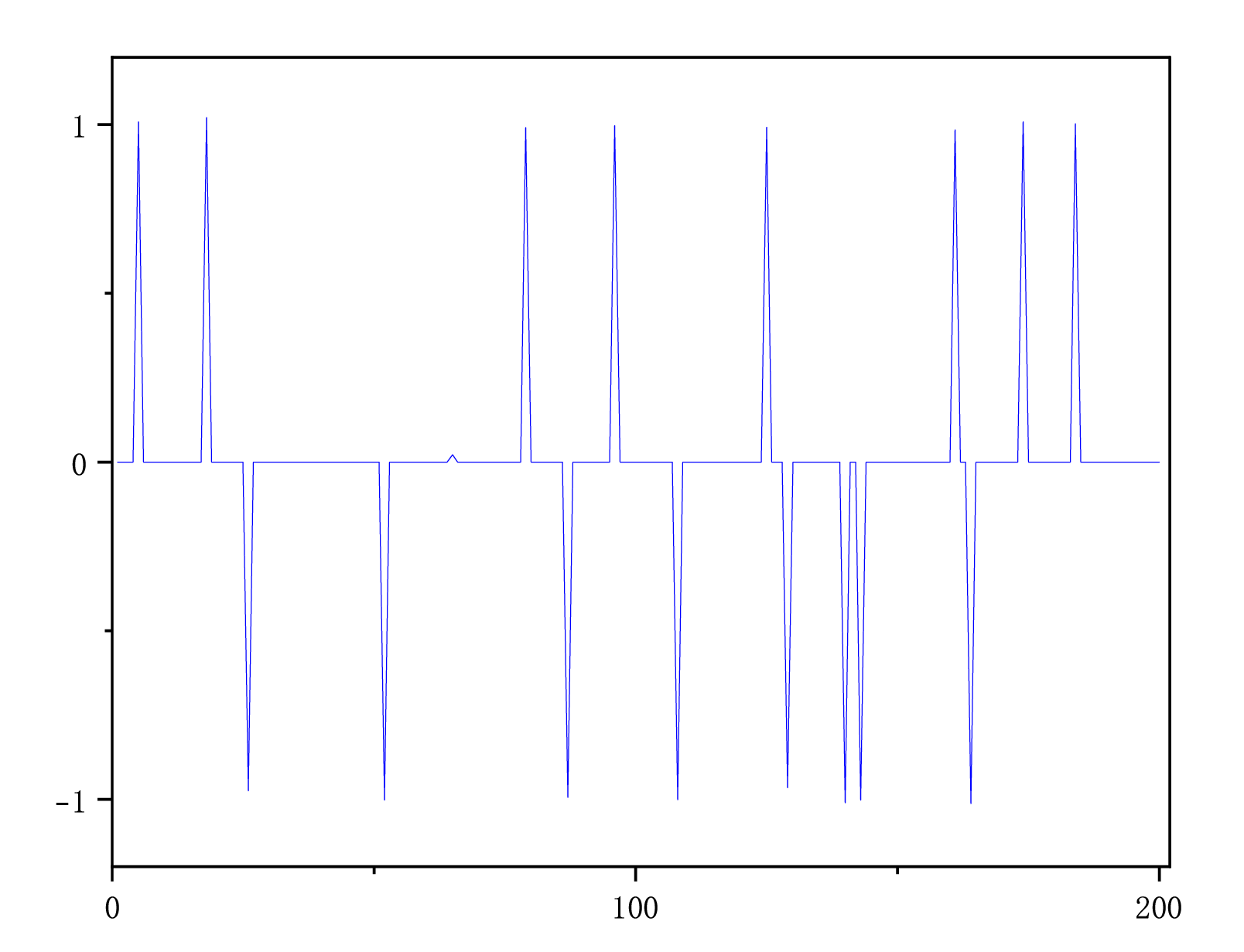}}
    \subfigure[HV-$\left(\ell_{1}^{2}-\eta\ell_{2}^{2}\right)$ algorithm with $\eta=1.0$, {\rm SNR} = 37.4681 ]{\includegraphics[width=0.45\columnwidth,height=0.3\linewidth]{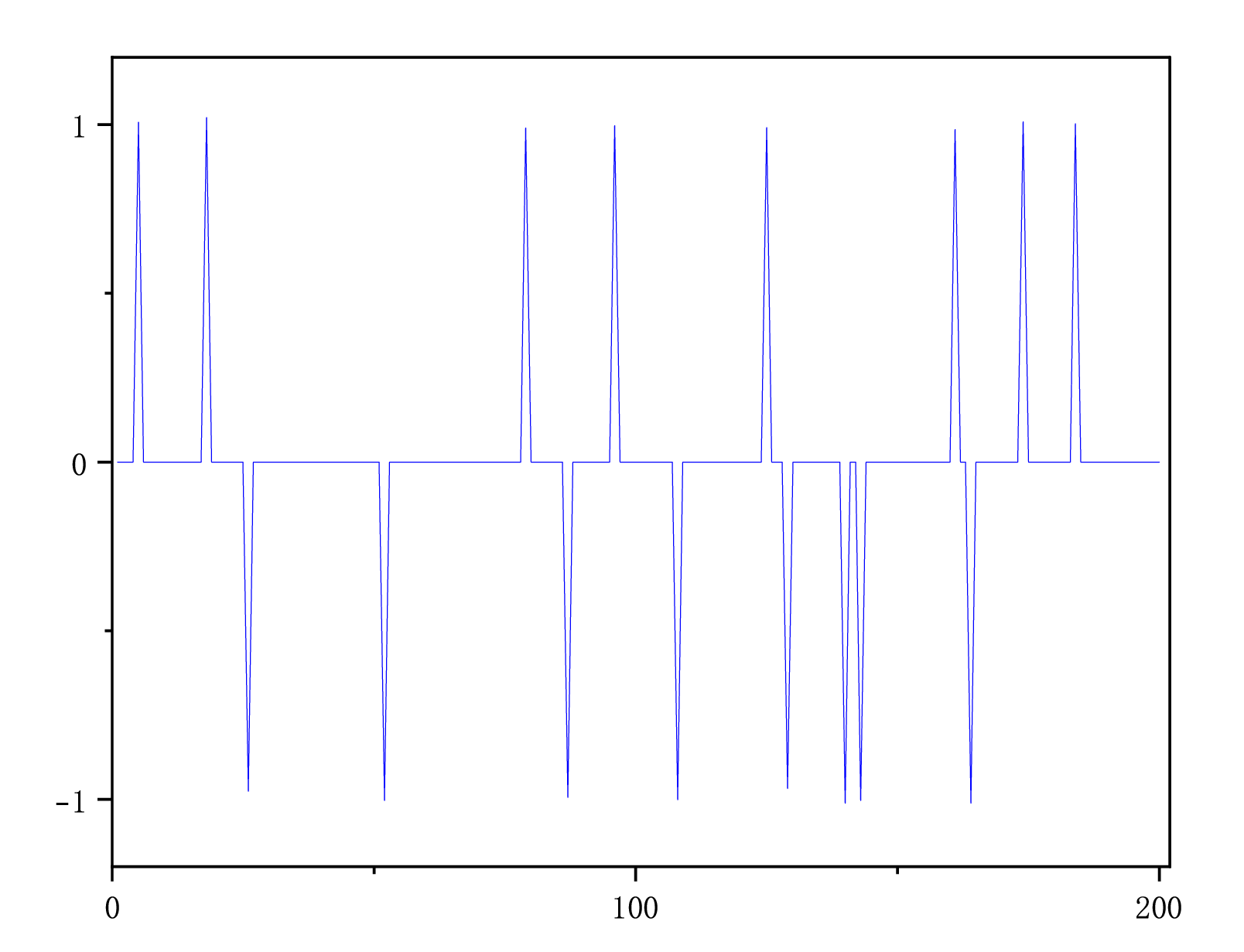}}
    \caption{(a) True signal.\ (b) Exact data and Observed data with $\delta$=30dB. (c)-(f) The reconstructed signal of HV-$\left(\ell_{1}^{2}-\eta\ell_{2}^{2}\right)$ algorithm with different $\eta$.}
    \label{figure:1}
\end{figure}

\par We present several numerical tests which demonstrate the efficiency of the proposed method. To make Algorithm \ref{alg2} clear to the reader, we study the influence of the parameters $\lambda$, $\eta$, and the nonlinear maps $a$ and $b$ on the iterative result $x^{*}$. For the numerical simulation, we use a setting that $A$ is a Gaussian random measurement matrix. The nonlinear CS problem is of the form $\hat{a}\left(A_{m\times n}\hat{b}\right)\left(x_{n}\right)=y_{m}$, where $A_{m\times n}$ is a Gaussian random measurement matrix. The exact solution $x^{\dagger}$ is $s$-sparse. The exact data $y^{\dagger}$ is obtained by $y^{\dagger}=\hat{a}\left(A\hat{b}\left(x^{\dagger}\right)\right)$. White Gaussian noise  is added to the exact data $y^{\dagger}$ and $\delta$ is the noise level, measured in dB. The iterative solution is denoted by $x^{*}$. The performance of the iterative solution $x^{*}$ is evaluated by signal-to-noise ratio (SNR) which is defined by
\begin{equation*}
    {\rm SNR}:= -10{\rm log_{10}}\frac{\left\|x^{*}-x^{\dagger}\right\|_{\ell_{2}}^{2}} {\left\|x^{\dagger}\right\|_{\ell_{2}}^{2}}.
\end{equation*}
We utilize the discrepancy principle to choose the regularization parameter $\alpha$, and the iterative stopping criterion is that the error of the two adjacent iterative solutions is less than $1{\rm E-}5$ or the maximum number of iterations is reached.

\par Let $n=200$, $m=0.4n$, $p=0.2m$, where $p$ is the number of the impulses in the true  solution. For the sparsity regularization of linear ill-posed problems, the value of $\left\|A_{m\times n}\right\|_{\ell_{2}}$ needs to be less than 1 \cite{DDD04}. This requirement is also needed for the nonlinear CS problem \eqref{equ5.1}. The value of $\left\|A_{m\times n}\right\|_{\ell_{2}}$ is around 20, and we rescale the matrix $A_{m\times n}$ by $A_{m\times n}\rightarrow 0.05A_{m\times n}$. The initial value $x^{0}$ in Algorithm \ref{alg2} is generated by calling $x^{0}=0.01*{\rm ones}(n,1)$ in MATLAB. All numerical experiments were conducted using MATLAB R2024a on an AMD Ryzen 5-4500U with Radeon Graphics 2.38GHz workstation with 16GB RAM.

\par Test 1: the impact of $\eta$ on the Algorithm \ref{alg2}.

\par Initially, we examine the convergence and convergence rate of the proposed algorithm. We set $c=2$ and $d=3$, with a noise level of $\delta$ is 30dB. Different values of the parameter $\eta$ are tested to assess their impact on the iterative solution $x^{*}$. Theoretically, for Algorithm \ref{alg2} with a constant $L_{k}=10$. Fig.\ \ref{figure:1}  illustrates the graphs of the iterative solution $x^{*}$  for a regularization parameter $\alpha=5.1{\rm e}$-$5$. The results improve as $\eta$ increases, indicating that non-convex regularization with $\eta>0$ yields superior performance.

\par Test 2: the impact of $L$ on Algorithm \ref{alg2}.

\par Secondly, we explore the influence of the parameter $L$ on the iterative process. In our analysis, we set $n=200$, $m=0.4n$, $p=0.2m$. According to \eqref{equ4.6}, it is evident that the iterative results exhibit only minor variations in response to changes in $L$. We assess the range of values of $L$ that make the gradient of $f(x)$ $L_{f}$-Lipschitz continuous. Our calculations reveal that $\left\|f'\left(x^{\dagger}\right)-f'\left(x^{0}\right)\right\|\approx 22.37$ and $\left\|x^{\dagger}-x^{0}\right\|\approx 4$, which indicates that $L=L_{f}\geq 5.59$. However, empirical observations suggest that an increase in $L$ correlates with a higher number of iterations and longer computation times. In Table \ref{table:1}, we establish $\eta=1.0$ while maintaining the values of $\alpha$, $\delta$, $c$ and $d$ consistent with those in Test 1. We present the iterative results of Algorithm \ref{alg2} with different $L$. Our research findings demonstrate that increasing the value of $L$ initially improves the SNR of the reconstructed signal produced by Algorithm \ref{alg2}, but this improvement is followed by a gradual decrease. 

\par Theoretically, the computational time of the algorithm reaches its lowest point as the parameter approaches 5.59. However, numerical experiments indicate that the SNR of the reconstructed signal significantly deviates from the optimal value at this specific parameter. To determine the optimal value of $L$, it is generally necessary to employ data-driven methods, such as machine learning. This aspect is beyond the purview of the current study and will be addressed in greater detail in forthcoming research endeavors.

\begin{table}[H]
\caption{{\rm SNR} of reconstruction $x^{*}$ with different $L$ when $c=2$, $d=3$ and $\sigma=30$dB.}
\label{table:1}
\centering
\tabcolsep=0.35cm
\tabcolsep=0.01\linewidth
\scalebox{1}{
\begin{tabular}{cccc}
\hline
$L$ \quad & \multicolumn{1}{c}{Iteration number} \quad & \multicolumn{1}{c}{Computation time (s)} \quad & \multicolumn{1}{c}{SNR} \\
\hline 
6  & 660 & 113.9835 & 23.1867 \\
8  & 712 & 120.1261 & 27.6471 \\
10  & 503 & 140.4839 & 37.4681 \\
20  & 1020 & 285.4130 & 37.4247 \\
50  & 2139 & 631.3350 & 37.1039 \\
100  & 2563 & 1101.5282 & 36.5694 \\
\hline
\end{tabular}}
\end{table}

\par Test 3: the impact of $\delta$ on Algorithm \ref{alg2}.

\par Next, we examine the stability of Algorithm \ref{alg2}. To assess the impact of the noise level $\delta$, we introduce several different noise levels to the exact data $y^{\dagger}$. The results of the iterations are presented in Table \ref{table:2}. Notably, the SNR of the inversion solution $x^{*}$ increases as the noise level decreases. It is evident that satisfactory outcomes can be achieved when the noise level $\delta\geq 30$dB. Additionally, Table \ref{table:2} illustrates that Algorithm \ref{alg2} demonstrates good stability in relation to the noise level when the parameter $\eta$ is held constant. This suggests that the constant step-size variant of Algorithm \ref{alg2} is not particularly sensitive to the choice of $\eta$.

\begin{table}[H]
\caption{{\rm SNR} of reconstruction $x^{*}$ with several noise levels when $c=2$ and $d=3$.}
\label{table:2}
\centering
\tabcolsep=0.35cm
\tabcolsep=0.01\linewidth
\scalebox{1}{
\begin{tabular}{ccccccc}
\hline
& \multicolumn{1}{c}{$\eta=0$} & \multicolumn{1}{c}{$\eta=0.2$} & \multicolumn{1}{c}{$\eta=0.4$} & \multicolumn{1}{c}{$\eta=0.6$} & \multicolumn{1}{c}{$\eta=0.8$} & \multicolumn{1}{c}{$\eta=1$} \\
\hline
$\delta=50$dB, $\alpha$=7.4e-6  & 37.6347 & 38.0471 & 38.4366 & 39.0115 & 39.4698 & 39.9806 \\
$\delta=40$dB, $\alpha$=1.2e-5  & 33.5447 & 33.9714 & 34.2901 & 34.7511 & 35.2922 & 35.7130 \\
$\delta=30$dB, $\alpha$=5.1e-5  & 35.3088 & 35.6992 & 36.1416 & 36.6058 & 37.0000 & 37.4681 \\
$\delta=20$dB, $\alpha$=3.0e-4  & 8.4123 & 8.9097 & 9.4336 & 9.7323 & 9.9802 & 10.2690 \\
$\delta=10$dB, $\alpha$=1.9e-3  & 1.5396 & 1.7859 & 2.0118 & 2.3801 & 2.6299 & 2.9761 \\
\hline
\end{tabular}}
\end{table}

\par Test 4:  the impact of the nonlinearity on Algorithm \ref{alg2}.

\par Subsequently, we analyze the effects of the nonlinearity of the function $F$, specifically the parameters $c$ and $d$, on the iterative solution $x^{*}$. The nonlinearity of the CS problem described in equation \eqref{equ5.2} is influenced by the parameters $c$ and $d$. In particular, the degree of nonlinearity in $F$ increases as the parameters $c$ and $d$ rise. In Table \ref{table:3}, we set $\eta=1.0$ and keep $\alpha$, $\delta$, and $s$ consistent with those in Test 1. It is observed that when both $c$ and $d$ are odd numbers, most of the reconstruction results produced by Algorithm \ref{alg2} are relatively good. Conversely, when $d$ is an even number, Algorithm \ref{alg2} struggles, resulting in poor reconstruction outcomes. Additionally, when $d$ is odd and $c$ is even, occasionally a superior reconstruction result may be achieved. In fact, Algorithm \ref{alg2} can only identify the positive impulses and fails to recover negative impulses when $d$ is an even number. Theoretically, due to the non-convexity of $\mathcal{J}_{\alpha,\eta}^{\delta}\left(x\right)$ in \eqref{equ1.2}, the minimizer of $\mathcal{J}_{\alpha,\eta}^{\delta}\left(x\right)$ may be non-unique.

\begin{table}[H]
\caption{{\rm SNR} of reconstruction $x^{*}$ with different parameter $c$ and $d$, iterations number=1000.}
\label{table:3}
\centering
\tabcolsep=0.35cm
\tabcolsep=0.01\linewidth
\scalebox{1}{
\begin{tabular}{cccccccccc}
\hline
& \multicolumn{1}{c}{$d=1$} & \multicolumn{1}{c}{$d=2$} & \multicolumn{1}{c}{$d=3$} & \multicolumn{1}{c}{$d=4$} & \multicolumn{1}{c}{$d=5$} & \multicolumn{1}{c}{$d=6$} & \multicolumn{1}{c}{$d=7$} & \multicolumn{1}{c}{$d=8$} & \multicolumn{1}{c}{$d=9$} \\
\hline
$c=1$  & 30.6587 & 7.2674 & 39.8824 & 5.0509 & 27.2799 & 4.3630 & 13.2113 & 5.6239 & 6.8346 \\
$c=2$  & 4.9926 & 5.0398 & 37.4681 & 5.0488 & 5.3060 & 4.2578 & 6.9915 & 4.3172 & 10.4523 \\
$c=3$  & 30.2730 & 5.0480 & 39.6858 & 6.0195 & 38.0497 & 4.8467 & 13.5417 & 4.6592 & 5.4900 \\
$c=4$  & 5.1510 & 4.9924 & 30.8096 & 6.0180 & 33.0730 & 4.2589 & 35.6891 & 4.5218 & 7.0100 \\
$c=5$  & 25.9771 & 4.2587 & 36.4582 & 4.2589 & 34.0959 & 5.0493 & 10.4374 & 5.7413 & 5.8134 \\
$c=6$  & 12.8677 & 4.2534 & 35.7056 & 4.2552 & 11.0395 & 4.2579 & 4.0870 & 3.5901 & 35.4939 \\
$c=7$  & 21.8889 & 5.0434 & 35.2849 & 4.2579 & 33.9606 & 4.2584 & 39.0054 & 4.2589 & 6.4049 \\
$c=8$  & 14.4911 & 5.0454 & 33.9567 & 4.2585 & 35.9118 & 5.9656 & 40.1391 & 4.2569 & 39.3522 \\
$c=9$  & 20.4868 & 4.2540 & 35.2977 & 5.0497 & 35.5304 & 5.0506 & 39.0892 & 3.5900 & 34.7919 \\
\hline
\end{tabular}}
\end{table}

\par Test 5: Comparison with different algorithms.

\par Finally, to demonstrate the superiority of the algorithm, we compare the reconstruction effects achieved by various algorithms. We maintain the same settings as in Test 1 and evaluate the reconstruction results against those of the iterative soft thresholding algorithm (ISTA) as detailed in \cite{DDD04} and the ST-$\left(\alpha\ell_{1}-\beta\ell_{2}\right)$ algorithm as described in \cite{DH19, DH24}. Fig.\ \ref{figure:3} (b)-(d) display the reconstruction outcomes of the different algorithms along with their corresponding SNR values. The results indicate that HV-$\left(\ell_{1}^{2}-\eta\ell_{2}^{2}\right)$ outperforms the other algorithms. Furthermore, to more thoroughly assess the reconstruction effects of each algorithm, we present the changes in relative error throughout the iterative process over the number of iterations, as illustrated in Fig.\ \ref{figure:4}. The analysis reveals that the descent rate of the relative error (Rerror) for the HV-$(\ell_{1}^{2}-\eta\ell_{2}^{2})$ algorithm exceeds that of both the ISTA and ST-$\left(\alpha\ell_{1}-\beta\ell_{2}\right)$ algorithm throughout the iterative process. This demonstrates the efficacy of the proposed algorithm in addressing sparse inverse problems, highlighting its potential as a robust solution within this domain.

\begin{figure}[htbp]
    \centering
    \subfigure[True signal]{\includegraphics[width=0.45\columnwidth,height=0.3\linewidth]{true_signal.eps}}
    \subfigure[ISTA, {\rm SNR} = 34.9567 ]{\includegraphics[width=0.45\columnwidth,height=0.3\linewidth]{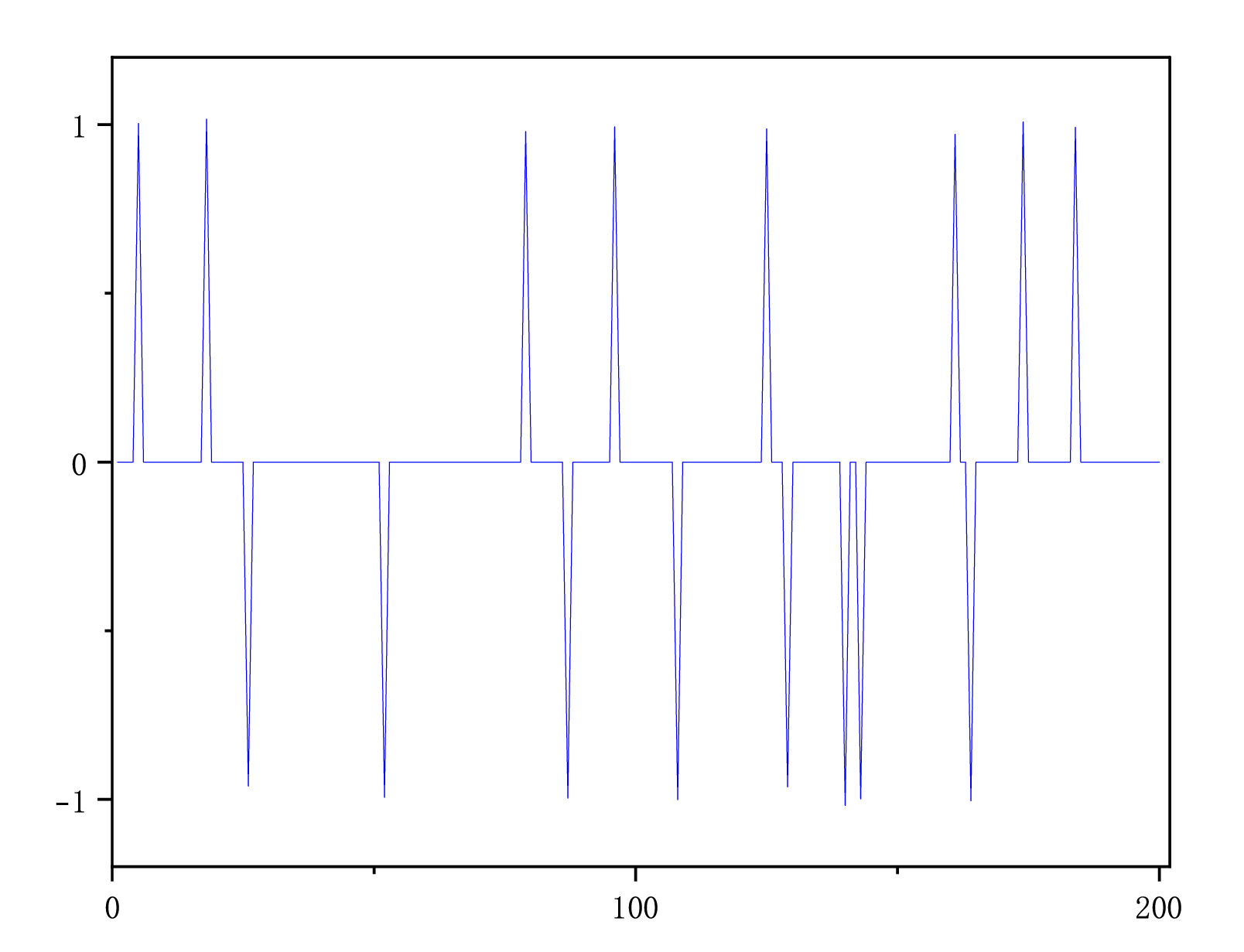}}\\
    \subfigure[ST-$\left(\alpha\ell_{1}-\beta\ell_{2}\right)$ algorithm with $\beta/\alpha=1$, {\rm SNR} = 36.0864 ]{\includegraphics[width=0.45\columnwidth,height=0.3\linewidth]{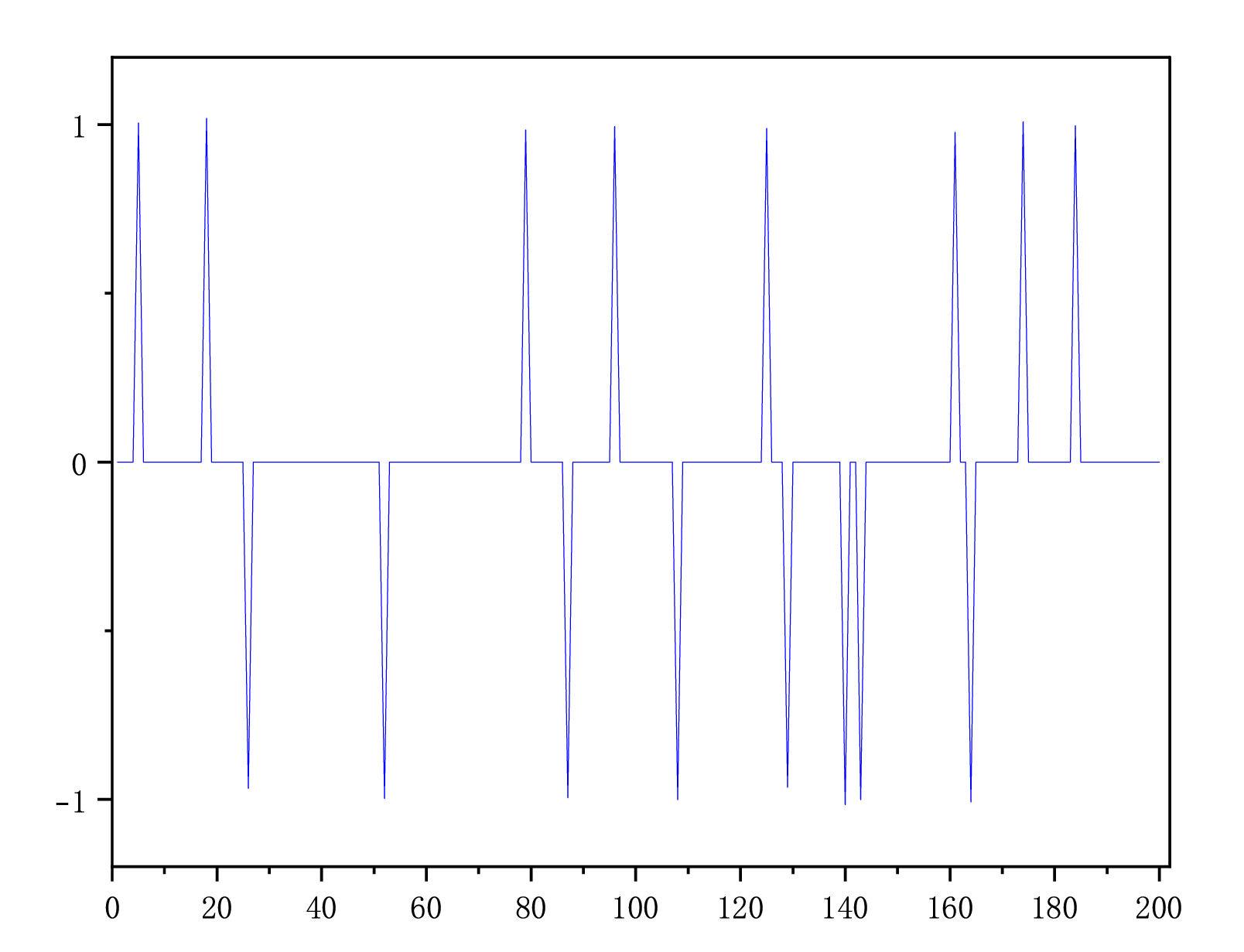}}
    \subfigure[HV-$\left(\ell_{1}^{2}-\eta\ell_{2}^{2}\right)$ algorithm with $\eta=1$, {\rm SNR} = 37.4247 ]{\includegraphics[width=0.45\columnwidth,height=0.3\linewidth]{eta=1.eps}}
    \caption{(a) True signal. (b) The reconstructed signal of ISTA. (c) The reconstructed signal of ST-$\left(\alpha\ell_{1}-\beta\ell_{2}\right)$ algorithm with $\beta/\alpha=1$. 
    (d) The reconstructed signal of HV-$\left(\ell_{1}^{2}-\eta\ell_{2}^{2} \right)$ algorithm with $\eta=1$.}
    \label{figure:3}
\end{figure}

\begin{figure}[htbp]
    \centering
    \includegraphics[width=0.45\columnwidth,height=0.3\linewidth]{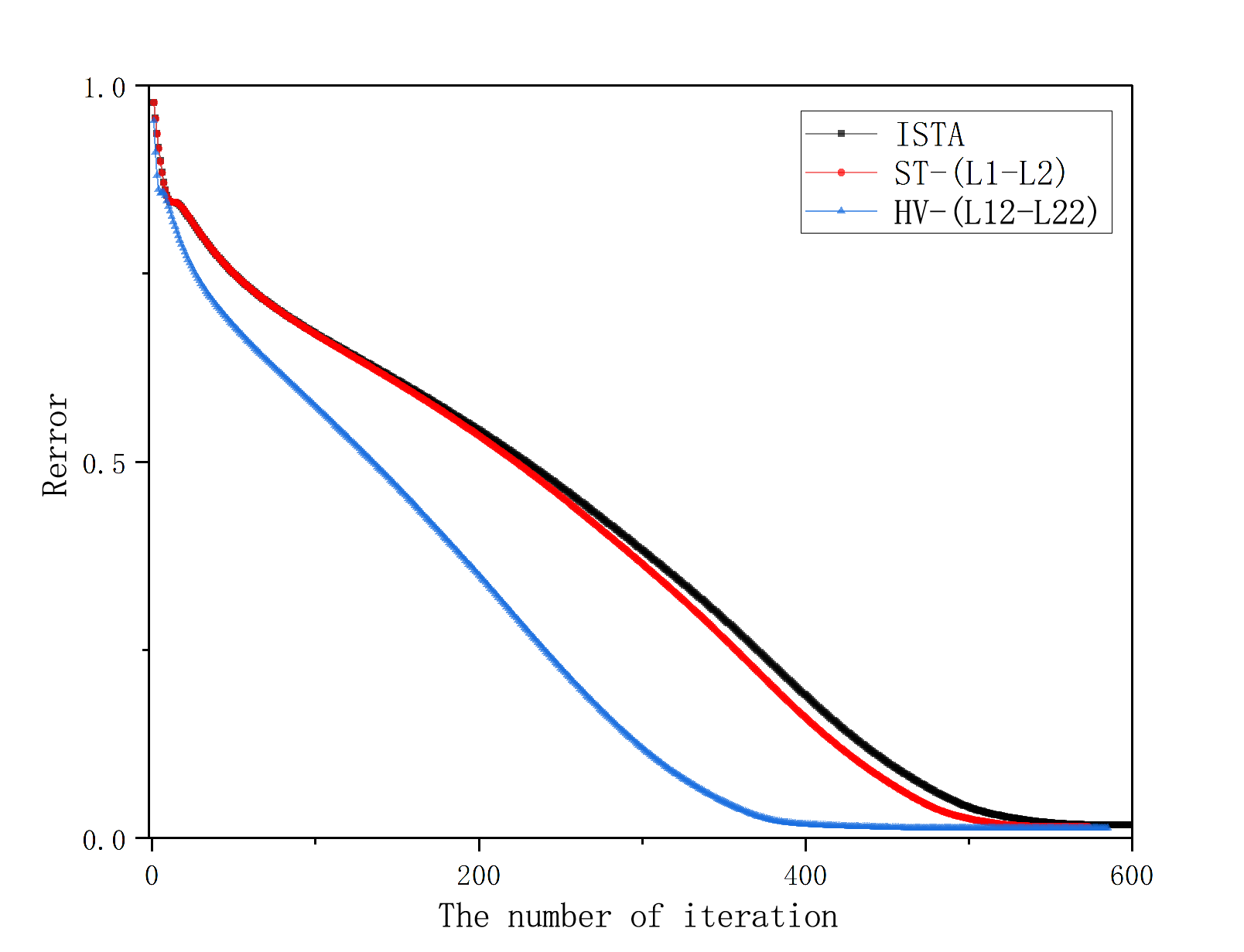}
    \caption{The change curve of Rerror for the ISTA, ST-$\left(\alpha\ell_{1}-\beta\ell_{2}\right)$ and HV-$\left(\ell_{1}^{2}-\eta\ell_{2}^{2}\right)$ algorithms with the increase of iteration.}
    \label{figure:4}
\end{figure}

\section{Conclusion}\label{sec6}

\par We investigated the $\ell_{1}^{2}-\eta\ell_{2}^{2}$ sparsity regularization for nonlinear ill-posed problems, where $(0<\eta\leq 1)$. Regarding the well-posedness of this regularization, in contrast to the scenario where $0<\eta< 1$, we can only establish weak convergence for the case $\eta=1$. Assuming that the nonlinear operator  $F$ satisfies certain conditions, we demonstrated that the regularized solution exhibits sparsity. We identified two distinct convergence rates $\mathcal{O}\left(\delta^{1/2}\right)$ and $\mathcal{O}\left(\delta\right)$, which arise under two commonly accepted nonlinear conditions. Additionally, a half-variation algorithm, denoted as HV-$\left(\ell_{1}^{2}-\eta\ell_{2}^{2}\right)$, can be extended for addressing the non-convex $\ell_{1}^{2}-\eta\ell_{2}^{2}$ $(0<\eta\leq 1)$ sparsity regularization for nonlinear ill-posed problems. Numerical experiments demonstrate that the proposed method is both convergent and stable. In the future, the integration of $\ell_{1}^{2}-\ell_{2}^{2}$ regularization within the projected gradient method holds promise for addressing nonlinear ill-posed problems, potentially enhancing the convergence rate significantly.

\end{document}